\newcommand{\con}{\operatorname{con}}
\newcommand{\Id}{\operatorname{Id}}
\newcommand{\mul}{\operatorname{mul}}
\newcommand{\simp}{\operatorname{sim}}
\newcommand{\ini}{\operatorname{ini}}
\newcommand{\fin}{\operatorname{fin}}
\newcommand{\Dist}{\operatorname{Dist}}
\newcommand{\var}{\operatorname{var}}
\newtheorem{theorem}{Theorem}[section]
\newtheorem{ex}[theorem]{Example}
\newtheorem{cor}[theorem]{Corollary}
\newtheorem{sufcon}[theorem]{Sufficient Condition}
\newtheorem{fact}[theorem]{Fact}
\newtheorem{lemma}[theorem]{Lemma}
\newtheorem{prop}[theorem]{Proposition}
\newtheorem{obs}[theorem]{Observation}
\newtheorem{sortlemma}{Sorting Lemma}
\theoremstyle{definition}
\newtheorem{question}{Question}
\begin{document}

\date{}

\title{Limit varieties of aperiodic monoids}
\author{Sergey V. Gusev and Olga  B. Sapir}

\maketitle

\renewcommand{\thefootnote}{\fnsymbol{footnote}}
\footnotetext{2010 \textit{Mathematics subject classification}. 20M07.}   
\footnotetext{\textit{Key words and phrases}. Monoid, variety, limit variety, finite basis problem.} 
\footnotetext{S. V. Gusev was supported by grant No.~25-71-00005 from the Russian Science Foundation, https://rscf.ru/en/project/25-71-00005/.}   
\renewcommand{\thefootnote}{\arabic{footnote}} 

\begin{abstract}
A {\it limit variety} is a variety that is minimal with respect to being non-finitely based.   
We find the fourteenth example of a limit variety of aperiodic monoids.
We also show that if there exists any other limit variety of aperiodic monoids, then it is contained in the join of the  variety generated by the \mbox{3-}element cyclic monoid and the variety of all idempotent monoids.
\end{abstract}

\section{Introduction}
\label{sec: intr}

A variety of algebras is called \textit{finitely based} (abbreviated to FB) if it has a finite basis of its identities, otherwise, the variety is said to be \textit{non-finitely based} (abbreviated to NFB).
A variety is \textit{hereditary finitely based} (abbreviated to HFB) if all its subvarieties are FB.
A variety is called a \textit{limit variety} if it is NFB but all its proper subvarieties are FB.

According to Zorn's lemma, a variety is HFB if and only if it excludes all limit varieties.
This explains the interest in limit varieties.
However, constructing explicit examples of limit varieties turns out to be extremely non-trivial.
For instance, there are uncountably many limit varieties of periodic groups~\cite{Kozhevnikov-12}, while no explicit example of a limit variety of groups is known.
Locating such an example remains one of the intriguing open problems in the theory of group varieties; see Section~3 in the survey article~\cite{Gupta-Krasilnikov-03}.

The present article is concerned with the limit varieties of \textit{monoids}, i.e., semigroups with an identity element.
A complete classification of all limit varieties of monoids seems to be highly infeasible since that would include a description of all limit varieties of periodic groups.
Therefore, it is logical to focus on the class of monoids with only trivial subgroups. 
Such monoids are called \textit{aperiodic} monoids.
The first two examples of limit varieties of aperiodic monoids were presented by Jackson~\cite{Jackson-05} in 2005.
Since then, limit varieties of aperiodic monoids have received much attention and several more examples have been found as well as some partial descriptions have been obtained; see the articles~\cite{Gusev-20,Gusev-21,Gusev-Sapir-22,Lee-09,Lee-12,Sapir-21,Sapir-23,Zhang-13,Zhang-Luo-19} and Section~11 of the recent monograph~\cite{Lee-23}.
We will discuss these results in detail in the next section.

In this paper, based on the previous results, we present a new example of a limit variety of aperiodic monoids.
We also show that if there is any other limit variety of aperiodic monoids, then it is contained in $\mathbb Q^1 \vee \mathbb B^{1}$, where $\mathbb B^{1}$ is the variety of all idempotent monoids and $\mathbb Q^1$ is the variety generated by the following six-element $J$-trivial monoid
\[
Q^1 = \langle e, b, c \mid  e^2 = e, eb = b, ce = c, ec = be = cb = 0 \rangle\cup\{1\}.
\]

The article is structured as follows.
In Section~\ref{sec: known}, we provide a brief overview of the previous results on limit varieties of aperiodic monoids.
Several basic facts are collected in Section~\ref{sec: prelim}.
In Section~\ref{sec: idempotent}, we remind a description of the lattice of varieties of idempotent monoids and some results on identities of idempotent monoids.
Based on the description of the subvariety lattice of $\mathbb B^{1}$, we clarify the structure of the subvariety lattice of the variety $\mathbb Q^1\vee\mathbb B^{1}$ in Section~\ref{sec: QB}. 
Section~\ref{sec: EEhfb} provides a new example of a HFB variety of monoids.
Section~\ref{sec: new lim} presents a new explicit example of a limit variety of aperiodic monoids.
Finally, in Section~\ref{sec: new sort}, we reduce the further study of limit varieties of aperiodic monoids to subvarieties of $\mathbb Q^1 \vee \mathbb B^1$.

\section{A brief overview of the previous results on limit varieties}
\label{sec: known}

We need some definitions and notation.
Let $\mathfrak A$ be a countably infinite set called an \textit{alphabet}. 
As usual, let~$\mathfrak A^+$ and~$\mathfrak A^\ast$ denote the free semigroup and the free monoid over the alphabet~$\mathfrak A$, respectively. 
Elements of~$\mathfrak A$ are called \textit{letters} and elements of~$\mathfrak A^\ast$ are called \textit{words}.
For any set $W$ of words, let $M(W)$ denote the Rees quotient monoid of $\mathfrak A^\ast$ over the ideal consisting of all words that are not subwords of any word in $W$. 
We denote by $\mathbb M(W)$ the variety generated by $M(W)$.

The varieties $\mathbb M(\{xzxyty\})$ and $\mathbb M(\{xyzxty,xzytxy\})$ are the two  above-men\-tioned first examples of limit varieties constructed by Jackson~\cite{Jackson-05}.
In~\cite{Lee-09}, Lee proved the uniqueness of the limit varieties $\mathbb M(\{xzxyty\})$ and $\mathbb M(\{xyzxty,xzytxy\})$ in the class of varieties of finitely generated aperiodic monoids with central idempotents.
In~\cite{Lee-12}, Lee generalized the result of~\cite{Lee-09} and established that $\mathbb M(\{xzxyty\})$ and $\mathbb M(\{xyzxty,xzytxy\})$ are the only limit varieties within the class of varieties of aperiodic monoids with central idempotents.
In 2013, Zhang~\cite{Zhang-13} found a NBF variety of monoids that contains neither $\mathbb M(\{xzxyty\})$ nor $\mathbb M(\{xyzxty,xzytxy\})$ and, therefore, she proved that there exists a limit variety of monoids that differs from $\mathbb M(\{xzxyty\})$ and $\mathbb M(\{xyzxty,xzytxy\})$. 

If $S$ is a semigroup, then the monoid obtained by adjoining a new identity element to $S$ is denoted by $S^1$.
If $M$ is a monoid, then the variety of monoids generated by $M$ is denoted by $\mathbb M$.
If $\mathbb V$ is a monoid variety, then  $\overline{\mathbb V}$ denotes the variety \textit{dual to} $\mathbb V$, i.e., the variety consisting of monoids anti-isomorphic to monoids from $\mathbb V$.

In~\cite{Zhang-Luo-19}, Zhang and Luo found the third explicit example of a limit variety of aperiodic monoids.
It is the variety $\mathbb A^1 \vee\overline{\mathbb A^1}$, where
\[
A=\langle a,b,c\mid a^2=a,\,b^2=b,\,ab=ca=0,\,ac=cb=c\rangle.
\]
The semigroup $A$ was introduced and shown to generate a FB variety in \cite[Section~19]{Lee-Zhang-15}.

For any $n\ge 1$, let $S_n$ denote the full symmetric group on the set $\{1,2,\dots,n\}$.
Let $\var\Sigma$ denote the variety defined by a set $\Sigma$ of identities. 

The varieties
\[
\mathbb J=\var
\left\{\!\!\!
\begin{array}{l}
x^2y^2\approx y^2x^2, xyx\approx xyx^2, xyzxy\approx yxzxy, xyxztx\approx xyxzxtx,\\
xz_{1\pi}z_{2\pi}\cdots z_{n\pi}xt_1z_1\cdots t_nz_n\approx x^2z_{1\pi}z_{2\pi}\cdots z_{n\pi}t_1z_1\cdots t_nz_n
\end{array}
\middle\vert
\begin{array}{l}
n\ge1,\\
\pi\in S_n
\end{array}
\!\!\!\right\},
\]
and $\mathbb {\overline J}$ are the next pair of limit varieties of monoids~\cite{Gusev-20}.
In~\cite{Gusev-21}, it was proved that $\mathbb M(\{xzxyty\})$, $\mathbb M(\{xyzxty,xzytxy\})$,
$\mathbb J$ and $\mathbb {\overline J}$ are the only limit varieties of aperiodic monoids with commuting idempotents.
In~\cite{Gusev-Sapir-22}, the authors  presented one more pair $\mathbb K^1$ and $\overline {\mathbb K^1}$ of limit varieties,
where
\[
K =\langle a,b,c \mid a^2=a,\,b^2=b^3,\,bcb^2=bcb,\,ca=c,\,abc=ac=ba=b^2c=0\rangle,
\] 
and showed that there are exactly seven limit varieties of $J$-trivial monoids.

Lee and Li \cite[Section~14]{Lee-Li-11} considered the semigroup
\[
E = \langle a, b, c \mid a^2 = ab = 0, ba = ca = a, b^2 = bc = b, c^2= cb = c \rangle
\]
and showed that 
\[
\mathbb E^1=\var\{xtx \approx xtx^2 \approx x^2tx,\, xy^2x \approx x^2y^2\}.
\]
For each $n\ge1$, let
\[
\mathbb E^1 \{\sigma_n\} = \var \{xtx \approx xtx^2 \approx x^2tx,\, xy^2x \approx x^2y^2, \sigma_n\}.
\]
where
\[
\sigma_n: \mathbf e_1t_1\mathbf e_2t_2\cdots\mathbf e_nt_nx^2y^2\approx\mathbf e_1t_1\mathbf e_2t_2\cdots\mathbf e_nt_ny^2x^2,
\]
and $\mathbf e_{2i-1}=x$ and $\mathbf e_{2i}=y$ for all $i\ge1$.
According to Proposition 5.6 in~\cite{Jackson-Lee-18}, the lattice of subvarieties of $\mathbb E^1$ contains an infinite ascending chain $\mathbb E^1\{\sigma_1\} \subset \mathbb E^1\{\sigma_2\} \subset \cdots \subset \mathbb E^1\{\sigma_n\} \subset \cdots$.
In~\cite{Sapir-23}, the second-named author proved that the varieties $\mathbb A^1\vee \mathbb E^1 \{\sigma_2\}$ and $\overline{\mathbb A^1}\vee \overline{\mathbb E^1 \{\sigma_2\}}$ are also limit.

In the recent paper~\cite{Gusev-Li-Zhang-25}, the first-named author, Li and Zhang proved that the following four varieties
\[
\mathbb J_1=\var
\left\{\!\!\!
\begin{array}{l}
xyxz\approx xyxzx, xyxty\approx x^2yty,\\
(x_1t_1x_2t_2\cdots x_{n}t_{n}) \,x\, (z_{1}z_{2}\cdots z_{2n}) \,y\, (t_{n+1}x_{n+1}\cdots t_{2n}x_{2n})\cdot\\
txy\, (x_{1\pi}z_{1\tau}x_{2\pi}z_{2\tau}\cdots x_{(2n)\pi}z_{(2n)\tau}) \approx\\
(x_1t_1x_2t_2\cdots x_{n}t_{n}) \,x\, (z_{1}z_{2}\cdots z_{2n}) \,y\, (t_{n+1}x_{n+1}\cdots t_{2n}x_{2n})\cdot\\
tyx\, (x_{1\pi}z_{1\tau}x_{2\pi}z_{2\tau}\cdots x_{(2n)\pi}z_{(2n)\tau})
\end{array}
\middle\vert
\begin{array}{l}
n\ge1,\\
\pi,\tau\in S_{2n}
\end{array}
\!\!\!\right\},
\]
\[
\mathbb J_2=\var\!
\left\{\!\!\!
\begin{array}{l}
xyxz\approx xyxzx,x y_1 y s xy x_1y_1tx_1\approx x y_1 y s xy x_1y_1tx_1,\\
(x_1t_1x_2t_2\cdots x_{2n}t_{2n})\, x\, (z_1s_1z_2s_2\cdots z_ns_n)\, y\, (z_{n+1}z_{n+2}\cdots z_{2n})\cdot\\
xy\,(x_{1\pi}z_{1\tau}x_{2\pi}z_{2\tau}\cdots x_{(2n)\pi}z_{(2n)\tau})\,t\,(s_1s_2\cdots s_n) \approx\\
(x_1t_1x_2t_2\cdots x_{2n}t_{2n})\, x\, (z_1s_1z_2s_2\cdots z_ns_n)\, y\, (z_{n+1}z_{n+2}\cdots z_{2n})\cdot\\
yx\,(x_{1\pi}z_{1\tau}x_{2\pi}z_{2\tau}\cdots x_{(2n)\pi}z_{(2n)\tau})\,t\,(s_1s_2\cdots s_n)
\end{array}
\!\middle\vert\!
\begin{array}{l}
n\ge1,\\
\pi,\tau\in S_{2n}
\end{array}
\!\!\!\!\right\},
\]
$\overline{\mathbb J_1}$ and $\overline{\mathbb J_2}$ are also limit and no other limit variety satisfying $xyxz\approx xyxzx$ exists.

Let $A_0$ be the semigroup given by presentation:
\[
A_0 = \langle  a, b \mid a^2=a, b^2=b, ab = 0 \rangle.
\]
The monoid $A_0^1$ was introduced and shown to be FB in~\cite{Edmunds-77}.
In the present paper, we show that the variety $\mathbb A_0^1\vee \mathbb E^1 \{\sigma_2\}\vee \overline{\mathbb E^1 \{\sigma_2\}}$ is a new example of a limit variety mentioned in the introduction.

\section{Preliminaries}
\label{sec: prelim}

A letter is called \textit{simple} [\textit{multiple}] in a word $\bf u$ if it occurs in $\bf u$ once [at least
twice].   
The set of all  letters in $\bf u$ is denoted by $\con({\bf u})$. 
Notice that  $\con({\bf u}) = \simp({\bf u}) \cup \mul({\bf u})$, where $\simp({\bf u})$ is the set of all simple letters in $\bf u$ and $\mul({\bf u})$ is the set of all multiple letters in $\bf u$.

\begin{fact}[\mdseries{\cite[Proposition~4.3]{Lee-Li-11}}]
\label{F: Q}
 An identity ${\bf u} \approx  {\bf v}$ holds in $\mathbb Q^1$ if and only if ${\bf u}= \mathbf a_0 \prod_{i=1}^m (t_i\mathbf a_i)$ and  ${\bf v} = \mathbf b_0 \prod_{i=1}^m (t_i\mathbf b_i)$ for some $m \ge 0$, $\simp({\bf u})= \simp({\bf v})=\{t_1,t_2,\dots, t_m\}$, and $\con({\bf a}_i) = \con({\bf b}_i)$ for each $0 \le i \le m$.\qed
\end{fact}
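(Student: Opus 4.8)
The plan is to prove the two implications separately, working throughout with the six‑element monoid $Q^1=\{1,e,b,c,d,0\}$ itself --- where $d=bc$, and the only nontrivial products among $\{e,b,c,d\}$ are $e^2=e$, $eb=b$, $ce=c$, $ed=de=d$, all others being $0$ --- since a variety satisfies an identity precisely when its generating monoid does.

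\emph{Necessity.} Assuming $Q^1\models\mathbf u\approx\mathbf v$, I would recover the four asserted properties from four families of substitutions $\varphi\colon\mathfrak A^\ast\to Q^1$. First, sending a chosen letter $x$ to $e$ and all other letters to $1$ sends a word $\mathbf w$ to $e$ if $x\in\con(\mathbf w)$ and to $1$ otherwise, forcing $\con(\mathbf u)=\con(\mathbf v)$. Second, sending $x$ to $b$ and all others to $1$ sends $\mathbf w$ to $1$, $b$, or $0$ according as $x$ is absent from, simple in, or multiple in $\mathbf w$ (here $b^2=0$); hence $\simp(\mathbf u)=\simp(\mathbf v)$, and therefore $\mul(\mathbf u)=\mul(\mathbf v)$. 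Third, if $t_i$ precedes $t_j$ among the simple letters of $\mathbf u$, then sending $t_i\mapsto b$, $t_j\mapsto c$ and all other letters to $1$ sends $\mathbf u$ to $bc=d\ne0$, whereas the same substitution would send $\mathbf v$ to $cb=0$ if those letters occurred in the opposite order in $\mathbf v$; so the simple letters occur in the same order in both words, and we may write $\mathbf u=\mathbf a_0\prod_{i=1}^m(t_i\mathbf a_i)$ and $\mathbf v=\mathbf b_0\prod_{i=1}^m(t_i\mathbf b_i)$ with the $\mathbf a_i,\mathbf b_i$ words over $\mul(\mathbf u)=\mul(\mathbf v)$. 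Fourth, to compare the $i$-th blocks, fix a multiple letter $y$ and send $t_i\mapsto b$, $t_{i+1}\mapsto c$ (dropping whichever separator is missing when $i=0$ or $i=m$), $y\mapsto e$, and every remaining letter to $1$; a direct evaluation --- using only $be=ec=0$, $bc=d$, $de=d$ and the idempotency of $e$ --- shows that $\varphi(\mathbf u)$ is a fixed nonzero element of $Q^1$ (one of $d$, $c$, $b$) when $y\notin\con(\mathbf a_i)$ and is $0$ when $y\in\con(\mathbf a_i)$, so that comparing $\varphi(\mathbf u)$ with $\varphi(\mathbf v)$ gives $\con(\mathbf a_i)=\con(\mathbf b_i)$.

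\emph{Sufficiency.} Conversely, the hypotheses give $\con(\mathbf u)=\con(\mathbf v)$ and $\mul(\mathbf u)=\mul(\mathbf v)$, and I would finish by a trichotomy on an arbitrary $\varphi\colon\mathfrak A^\ast\to Q^1$. If some letter is sent to $0$, then $\varphi(\mathbf u)=\varphi(\mathbf v)=0$. If some multiple letter is sent to one of $b,c,d$, then, since it occurs at least twice in each word and $ghg=0$ for all $h\in Q^1$ whenever $g\in\{b,c,d\}$, again $\varphi(\mathbf u)=\varphi(\mathbf v)=0$. Otherwise every multiple letter is sent into $\{1,e\}$, whence each $\varphi(\mathbf a_i)$ and each $\varphi(\mathbf b_i)$ lies in $\{1,e\}$ and equals $e$ exactly when the content of that block contains a letter sent to $e$ --- a condition depending only on $\con(\mathbf a_i)=\con(\mathbf b_i)$. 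Thus $\varphi(\mathbf a_i)=\varphi(\mathbf b_i)$ for every $i$, and multiplying these equalities together with the common factors $\varphi(t_i)$ gives $\varphi(\mathbf u)=\varphi(\mathbf v)$.

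\emph{Main obstacle.} Both directions are short once the substitutions and the trichotomy are in place; preservation of content and of the order of simple letters is standard. The step that needs care is the fourth substitution in the necessity argument, where one must localise the content of a single block: the device is to use $b$ and $c$ as \emph{one-sided} separators --- exploiting $eb=b$ and $ce=c$ against $be=ec=0$ --- so that the contributions of all the other blocks are forced into the harmless set $\{1,e\}$ and cannot interfere with the reading.
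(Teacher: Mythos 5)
Your proposal is correct. Note that the paper offers no proof of this statement at all --- it is imported verbatim from Lee and Li (Proposition~4.3 of \cite{Lee-Li-11}) --- so what you have written is a self-contained verification of a cited result rather than an alternative to an argument in the text. Your four test substitutions do exactly what is needed: the computation $P\,b\,M\,c\,S$ with $P,M,S\in\{1,e\}$ evaluates to $d$ (or $b$, or $c$ at the ends) when $M=1$ and to $0$ when $M=e$ because $be=ec=0$, which pins down $\con(\mathbf a_i)$; and the sufficiency trichotomy is airtight once one observes that $\con(\mathbf u)=\con(\mathbf v)$ and $\mul(\mathbf u)=\mul(\mathbf v)$ already follow from the stated hypotheses, as you do. The only point worth making explicit is the convention you are relying on: the presentation of $Q^1$ lists $e^2=e$, $eb=b$, $ce=c$ and $ec=be=cb=0$, and you must read all \emph{unlisted} products of generators (in particular $b^2$ and $c^2$) as $0$; this is the standard reading for a presentation with zero and is the only one yielding the six-element monoid $\{1,e,b,c,bc,0\}$, but both your second substitution (detecting simplicity via $b^2=0$) and the identity $ghg=0$ for $g\in\{b,c,d\}$ in the sufficiency argument depend on it, so it should be stated rather than assumed. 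A cosmetic slip: $bc=d$ is itself a nontrivial product and belongs on your list alongside $e^2=e$, $eb=b$, $ce=c$, $ed=de=d$.
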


 A \textit{block} of a word $\mathbf u$ is a maximal subword of $\mathbf u$ that does not contain any letters simple in $\mathbf u$.
If ${\bf u}= \mathbf a_0 \prod_{i=1}^m (t_i\mathbf a_i)$ and  ${\bf v} = \mathbf b_0 \prod_{i=1}^m (t_i\mathbf b_i)$ with $ \simp({\bf u})= \simp({\bf v})=\{t_1,t_2,\dots, t_m\}$, then the blocks $\mathbf a_i$ and $\mathbf b_i$ are called \textit{corresponding}.

Let 
\[
\mathbb L_2^1 = \var\{x \approx x^2, xy \approx xyx\}\ \text{ and }
\mathbb R_2^1 = \var\{x \approx x^2, xy \approx yxy\}.
\]
Let $\ini(\mathbf w)$ [respectively, $\fin(\mathbf w)$] denote the word obtained from $\mathbf w$ by retaining the first [respectively, last] occurrence of each letter.
The following description of identities of $\mathbb L_2^1$ and $\mathbb R_2^1$ is well-known and can be easily verified.

\begin{fact}
\label{F: L2}
\quad
\begin{itemize}
\item[\textup{(i)}] An identity ${\bf u} \approx  {\bf v}$ holds in $\mathbb L_2^1$ if and only if $\ini({\bf u}) = \ini({\bf v})$.
\item[\textup{(ii)}] An identity ${\bf u} \approx  {\bf v}$ holds in $\mathbb R_2^1$ if and only if $\fin({\bf u}) = \fin({\bf v})$.\qed
\end{itemize}
\end{fact}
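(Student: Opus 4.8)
\medskip

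The plan is to prove part~(i) by hand and then deduce part~(ii) by duality. For~(i) I would first show that the defining identities of $\mathbb L_2^1$ collapse every word to its initial part, and then show that this collapse identifies nothing beyond what $\ini$ already records.

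For the first step, substituting $y\mapsto\mathbf s$ into $xy\approx xyx$ gives $x\mathbf s\approx x\mathbf sx$ for an arbitrary word $\mathbf s$; specializing $x$ to a letter $a$ and multiplying on the right by an arbitrary word $\mathbf t$ yields the \emph{absorption identity} $a\mathbf sa\mathbf t\approx a\mathbf s\mathbf t$ (the case of empty $\mathbf s$ being the instance $a^2\mathbf t\approx a\mathbf t$ of $x\approx x^2$). Given a word $\mathbf w$ with $\mathbf w\ne\ini(\mathbf w)$, I would pick a non-first occurrence of some letter $a$ and write $\mathbf w=\mathbf p_1a\mathbf p_2a\mathbf r$ with $a\notin\con(\mathbf p_1)$; applying the absorption identity to the factor $a\mathbf p_2a\mathbf r$ deletes this occurrence of $a$ and, since it is not a first occurrence, leaves the first occurrence of every letter in place. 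Iterating (induction on the length of $\mathbf w$) shows that $\mathbf w\approx\ini(\mathbf w)$ is a consequence of $\{x\approx x^2,\ xy\approx xyx\}$, hence holds in $\mathbb L_2^1$. The ``if'' direction is then immediate: $\ini(\mathbf u)=\ini(\mathbf v)$ gives $\mathbf u\approx\ini(\mathbf u)=\ini(\mathbf v)\approx\mathbf v$ in $\mathbb L_2^1$.

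For the ``only if'' direction I would use the three-element monoid $L_2^1=\{1,e,f\}$ obtained by adjoining an identity to the two-element left-zero semigroup. Under any substitution, the value of a word $\mathbf w$ in $L_2^1$ is $1$ when every letter of $\mathbf w$ is sent to $1$, and otherwise equals the image of the first letter of $\mathbf w$ whose image is not $1$; from this description one reads off at once that $L_2^1$ satisfies $x\approx x^2$ and $xy\approx xyx$, so $L_2^1\in\mathbb L_2^1$. Now suppose $\ini(\mathbf u)\ne\ini(\mathbf v)$. If $\con(\mathbf u)\ne\con(\mathbf v)$, say $a\in\con(\mathbf u)\setminus\con(\mathbf v)$, then the substitution sending $a$ to $e$ and every other letter to $1$ evaluates $\mathbf u$ to $e$ and $\mathbf v$ to $1$. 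Otherwise $\con(\mathbf u)=\con(\mathbf v)$, so $\ini(\mathbf u)$ and $\ini(\mathbf v)$ are distinct orderings of the same set of letters, and there exist letters $a,b$ whose first occurrences appear in the order $a,\dots,b$ in $\mathbf u$ but in the order $b,\dots,a$ in $\mathbf v$; the substitution sending $a$ to $e$, $b$ to $f$, and every other letter to $1$ evaluates $\mathbf u$ to $e$ and $\mathbf v$ to $f$. In both cases $\mathbf u\approx\mathbf v$ fails in $L_2^1\in\mathbb L_2^1$, so $\ini(\mathbf u)=\ini(\mathbf v)$ is necessary, finishing part~(i).

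Part~(ii) then follows by the anti-isomorphism duality: dualizing the defining identities shows $\mathbb R_2^1=\overline{\mathbb L_2^1}$ (the dual of $xy\approx xyx$ is $yx\approx xyx$, which is equivalent to $xy\approx yxy$), and since word reversal interchanges the roles of $\ini$ and $\fin$ (up to reversing the resulting words), part~(i) translates verbatim into part~(ii). I do not expect a genuine obstacle here; the only points that need a little care are checking that the absorption rewriting in the first step never disturbs a first occurrence, and choosing the substitutions into $L_2^1$ in the second step so that they actually separate the two words.
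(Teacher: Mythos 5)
Your proof is correct. The paper does not supply an argument for this fact at all (it is stated as ``well-known and can be easily verified''), and what you give -- collapsing every word to $\ini(\mathbf w)$ via the absorption identity $a\mathbf s a\mathbf t\approx a\mathbf s\mathbf t$, separating words with distinct initial parts inside the three-element left-zero monoid with identity adjoined, and dualizing for part~(ii) -- is precisely the standard verification, with the one delicate point (that deleting a non-first occurrence never disturbs $\ini$) handled properly.
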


\begin{fact}[\mdseries{\cite[Section~14.3]{Lee-Li-11}}]
\label{F: E}
An identity ${\bf u} \approx  {\bf v}$ holds in $\mathbb E^1$ if and only if ${\bf u} \approx {\bf v}$ holds in $\mathbb Q^1$ and the corresponding blocks of $\bf u$ and $\bf v$ are equivalent within $\mathbb L_2^1$.\qed
\end{fact}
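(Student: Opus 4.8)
\emph{Necessity.} The two implications will be treated separately, and the argument relies on the concrete multiplication of the five-element semigroup $E$; recall that $a^2=ab=0$, $ba=ca=a$, $b^2=bc=b$, $c^2=cb=c$, that $\{b,c\}$ is a left-zero semigroup, and that $E$ contains a nonzero element $ac$ with $(ac)^2=0$, $a\cdot ac=ac\cdot a=0$, and $b\cdot ac=c\cdot ac=ac\cdot b=ac\cdot c=ac$. For the forward implication, a routine finite check shows that $Q^1$ satisfies the defining identities $xtx\approx xtx^2\approx x^2tx$ and $xy^2x\approx x^2y^2$ of $\mathbb E^1$, so $\mathbb Q^1\subseteq\mathbb E^1$; hence every identity ${\bf u}\approx{\bf v}$ of $\mathbb E^1$ holds in $\mathbb Q^1$, and Fact~\ref{F: Q} provides ${\bf u}=\mathbf a_0\prod_{i=1}^m(t_i\mathbf a_i)$, ${\bf v}=\mathbf b_0\prod_{i=1}^m(t_i\mathbf b_i)$ with $\simp({\bf u})=\simp({\bf v})=\{t_1,\dots,t_m\}$ and $\con(\mathbf a_i)=\con(\mathbf b_i)$ for all $i$. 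It remains to show $\ini(\mathbf a_i)=\ini(\mathbf b_i)$ for all $i$, which by Fact~\ref{F: L2} expresses the $\mathbb L_2^1$-equivalence of corresponding blocks. If this failed for some $i$, then, since the two blocks have equal content, some letters $x,y\in\con(\mathbf a_i)$ would have their first occurrences inside $\mathbf a_i$ and inside $\mathbf b_i$ in opposite orders; substituting into ${\bf u}\approx{\bf v}$ the map sending $x\mapsto b$, $y\mapsto c$, every other letter to $1$, and additionally, when $i\geq1$, the simple letter $t_i$ immediately preceding $\mathbf a_i$ to $a$, a short computation with the relations above makes the two sides of the identity equal to $0$ and to $ac$ (or to $b$ and to $c$ when $i=0$)---impossible since $E^1\in\mathbb E^1$.

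\emph{Sufficiency.} Conversely, suppose ${\bf u}\approx{\bf v}$ holds in $\mathbb Q^1$ and $\ini(\mathbf a_i)=\ini(\mathbf b_i)$ for all $i$. Since $\mathbb E^1$ is generated by $E^1$, it suffices to verify $\varphi({\bf u})=\varphi({\bf v})$ for every substitution $\varphi\colon\mathfrak A^\ast\to E^1$, and I would do this by examining the partition of the alphabet according to $\varphi$-image. If some letter maps to the zero of $E^1$, both sides equal $0$ because ${\bf u}$ and ${\bf v}$ have the same content (itself a standard consequence of having a zero available). Using the identities $a\,w\,a=(ac)\,w\,(ac)=a\,w\,(ac)=(ac)\,w\,a=0$, valid for every $w\in E^1$, together with $\simp({\bf u})=\simp({\bf v})$ and $\con({\bf u})=\con({\bf v})$, one reduces to the case in which at most one letter is sent into $\{a,ac\}$ and, if so, that letter is some simple $t_i$; in every other situation both sides collapse to $0$. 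In the surviving cases, the identities $p\cdot a=a$ and $p\cdot ac=ac$ for $p\in\{1,b,c\}$, the left-zero behaviour of $\{b,c\}$, and $ac\cdot b=ac\cdot c=ac$ show that $\varphi({\bf u})$ is determined by the image of the first letter of ${\bf u}$ whose image lies in $\{b,c\}$, except in the subcase $\varphi(t_i)=a$, where it is determined by the first such letter of the tail $\mathbf a_i t_{i+1}\mathbf a_{i+1}\cdots t_m\mathbf a_m$. The proof is then completed by the elementary combinatorial observation that $\con(\mathbf a_i)=\con(\mathbf b_i)$ and $\ini(\mathbf a_i)=\ini(\mathbf b_i)$ for all $i$ force $\ini$ of every such tail of ${\bf u}$ to equal $\ini$ of the corresponding tail of ${\bf v}$, so that the relevant ``first letters'' coincide for ${\bf u}$ and ${\bf v}$ and hence $\varphi({\bf u})=\varphi({\bf v})$.

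I expect the main obstacle to lie in the sufficiency direction, more precisely in keeping track of which property of the blocks $\varphi$ actually reads off once the various collapses to $0$ have occurred: one must use the hypothesis ``$\ini(\mathbf a_i)=\ini(\mathbf b_i)$ for every $i$'' in full and not merely its consequence $\ini({\bf u})=\ini({\bf v})$, which is strictly weaker (for instance $xytxy$ and $xytyx$ have the same $\ini$ but inequivalent second blocks, and $\mathbb E^1$ does distinguish these two words). For this reason I would formulate and prove the tail-$\ini$ observation above as a separate lemma before carrying out the case analysis on $\varphi$.
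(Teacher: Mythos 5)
The paper does not actually prove this statement: it is imported from Lee and Li \cite[Section~14.3]{Lee-Li-11} and closed with a tombstone, so there is no in-paper argument to compare against. Your direct verification over the six-element monoid $E^1=\{1,a,b,c,ac,0\}$ is correct. For necessity, $\mathbb Q^1\subseteq\mathbb E^1$ does follow by applying Fact~\ref{F: Q} to the two basis identities of $\mathbb E^1$ recorded in Section~\ref{sec: known}, and your substitution $x\mapsto b$, $y\mapsto c$, $t_i\mapsto a$, all else $\mapsto 1$ evaluates the two sides to $0$ and $ac$ (via $pa=a$ for $p\in\{1,b,c\}$, $ab=0$, and $ac\cdot b=ac\cdot c=ac$), and to $b$ versus $c$ when $i=0$. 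For sufficiency, the reduction to ``at most one letter lands in $\{a,ac\}$, and then only a simple letter'' is legitimate because $\con(\mathbf u)=\con(\mathbf v)$ and $\mul(\mathbf u)=\mul(\mathbf v)$ force both sides to vanish in every other case, and your tail-$\ini$ lemma does follow from $\con(\mathbf a_j)=\con(\mathbf b_j)$ and $\ini(\mathbf a_j)=\ini(\mathbf b_j)$ for all $j\ge i$ together with the matching positions of $t_{i+1},\dots,t_m$; the three surviving evaluations ($a$, $ac$, or the first $\{b,c\}$-image in the relevant tail) then agree on the two sides. Your closing remark correctly isolates the one genuine trap --- blockwise $\ini$ versus global $\ini$ --- and the example $xytxy$ vs.\ $xytyx$ (which evaluate to $0$ and $ac$ under $x\mapsto b$, $y\mapsto c$, $t\mapsto a$) is apt. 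The only point worth flagging is that your necessity direction leans on the Lee--Li identity basis for $\mathbb E^1$ in order to obtain $\mathbb Q^1\subseteq\mathbb E^1$; this is not circular here, since the paper records that basis as an independent prior result, but a fully self-contained proof would instead have to exhibit $Q^1$ as a divisor of a power of $E^1$ or verify the $\mathbb Q^1$-conditions by further direct substitutions into $E^1$.
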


\begin{lemma}[\mdseries{\cite[Lemma~4.3]{Sapir-23}}]
\label{L: not E} 
Let $\mathbb V$ be a monoid variety that satisfies  $xtx \approx x^2tx \approx xtx^2$ and contains neither  $\mathbb E^1\{\sigma_2\}$ nor $\overline{\mathbb A^1}$.
Then  $\mathbb V$ is FB.\qed
\end{lemma}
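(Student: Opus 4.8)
The plan is to exhibit an explicit finite basis for $\mathbb V$ and then verify, by a word-rewriting argument, that every identity of $\mathbb V$ is deducible from it. Throughout put $\mathcal I=\{xtx\approx x^2tx\approx xtx^2\}$, so that $\mathbb V$ lies in the variety $\var\mathcal I$. The first step is to turn the two exclusions into identities: since one variety contains another exactly when it satisfies all of the latter's identities, the hypotheses $\overline{\mathbb A^1}\not\subseteq\mathbb V$ and $\mathbb E^1\{\sigma_2\}\not\subseteq\mathbb V$ each supply an identity of $\mathbb V$ that fails in $\overline{\mathbb A^1}$, respectively in $\mathbb E^1\{\sigma_2\}$; call them $\alpha$ and $\beta$. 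Normalising these modulo $\mathcal I$ (and, where applicable, modulo $xy^2x\approx x^2y^2$ via Fact~\ref{F: E}), I expect $\alpha$ and $\beta$ to reduce to short identities: $\alpha$ to one constraining the fine structure of adjacent blocks — of the flavour of the identity $xy^2x\approx y^2x^2$ that $\overline{\mathbb A^1}$ refuses — and $\beta$ to one of the $\sigma_1$ flavour, which $\mathbb E^1\{\sigma_2\}$ refuses and which (together with $\mathcal I$) bounds the length of the block permutations that identities of $\mathbb V$ can perform; note that $\mathbb E^1\{\sigma_2\}\not\subseteq\mathbb V$ already removes the whole upper tail $\mathbb E^1\{\sigma_2\}\subseteq\mathbb E^1\{\sigma_3\}\subseteq\cdots$ of the Jackson--Lee chain, which is the source of non-finite-basedness here. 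The candidate basis is then $\mathcal I\cup\{\alpha,\beta\}$ together with all identities of $\mathbb V$ in at most $N$ letters, for a constant $N$ computed from the lengths of $\alpha$, $\beta$ and from the presentations of $A$ and $E$; the normal form below will show that only finitely many such short identities survive modulo $\mathcal I\cup\{\alpha,\beta\}$, so this set is finite.

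Next I would set up the normal form. The identities of $\mathcal I$ allow one to insert or delete a redundant second copy of a letter at its first or last occurrence within the stretch of a word lying between two consecutive occurrences of that letter; iterating, every word $\mathbf w$ reduces to a canonical representative that records the simple letters of $\mathbf w$, the block decomposition $\mathbf w=\mathbf a_0\,t_1\,\mathbf a_1\cdots t_m\,\mathbf a_m$, the content $\con(\mathbf a_i)$ of each block, and a minimal amount of order data inside and among the blocks. If $\mathbb Q^1\not\subseteq\mathbb V$, then $\mathbb V$ satisfies an identity altering the simple letters, their order, or a block content; such varieties form a part of the lattice that can be handled separately, and we concentrate on the main case $\mathbb Q^1\subseteq\mathbb V$. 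Then every identity of $\mathbb V$ preserves the simple letters and all block contents by Fact~\ref{F: Q}, so the only freedom left in an identity of $\mathbb V$ is to rearrange letters inside a block or to permute whole blocks; moreover $\mathbb E^1\not\subseteq\mathbb V$ (else $\mathbb E^1\{\sigma_2\}\subseteq\mathbb V$), so by Fact~\ref{F: E} intra-block rearrangement beyond $\ini$-equivalence modulo $\mathbb L_2^1$ (Fact~\ref{F: L2}) genuinely occurs in $\mathbb V$. The absent variety $\overline{\mathbb A^1}$ is the exact obstruction showing that, once such rearrangement is admitted at all, it is admitted only over bounded length, and $\alpha$ is its finite certificate; likewise the absent variety $\mathbb E^1\{\sigma_2\}$ bounds block permutation, with $\beta$ its finite certificate.

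Finally, given an identity $\mathbf u\approx\mathbf v$ of $\mathbb V$, I would reduce both sides to normal form using the candidate basis and then argue, by induction on length, that they can be brought together: by the previous paragraph the two normal forms already agree on simple letters and on block contents, and the residual discrepancies — intra-block rearrangements controlled by $\alpha$ and block permutations controlled by $\beta$ — are either of bounded length, hence settled by a short identity already in the basis, or can be cut, using $\alpha$ or $\beta$, into shorter identities of $\mathbb V$ to which the induction hypothesis applies. The step I expect to be the main obstacle is exactly the passage from ``the two exclusions'' to ``a genuine numerical length bound'': one must prove that, once $\mathbb E^1\{\sigma_2\}$ and $\overline{\mathbb A^1}$ are absent, only short-range rearrangements of letters inside blocks and of blocks against one another can appear in identities of $\mathbb V$, so that finitely many exchange identities suffice. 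This requires a careful combinatorial analysis of precisely which words the finite monoid $\overline{A^1}$, and the monoids witnessing the varieties $\mathbb E^1\{\sigma_n\}$, do and do not separate — in the spirit of Facts~\ref{F: Q}--\ref{F: E} — and it is the combinatorial heart of the argument; the rest is bookkeeping with $\mathcal I$.
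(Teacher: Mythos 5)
First, a point of reference: the paper does not prove this lemma at all --- it is imported verbatim from \cite[Lemma~4.3]{Sapir-23} and used as a black box, so there is no internal proof to compare yours against. Judged on its own terms, your proposal is a proof plan rather than a proof, and the plan's central step is missing. You correctly note that each exclusion supplies an identity of $\mathbb V$ failing in the excluded variety, but everything after that is conditional: you ``expect'' $\alpha$ and $\beta$ to normalise to short identities of a particular flavour, you assert without argument that these two identities force every intra-block rearrangement and every inter-block exchange occurring in identities of $\mathbb V$ to be of bounded range, and you then declare that finitely many exchange identities suffice. That bounded-range claim \emph{is} the lemma; you flag it yourself as ``the main obstacle'' and defer it to ``a careful combinatorial analysis'' that is never carried out. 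A single identity falsified by $\overline{\mathbb A^1}$ (or by $\mathbb E^1\{\sigma_2\}$) does not by itself yield a numerical bound on the rearrangements available in arbitrary identities of $\mathbb V$ --- this is precisely the inference that fails for non-finitely based varieties in general, and making it work here is the entire content of the result.

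The known argument also has a different shape from your normal-form rewriting scheme. The exclusion of $\mathbb E^1\{\sigma_2\}$ is converted, via the stability criterion recorded as Fact~\ref{F: E2}(ii) in the present paper, into the failure of stability of the $\beta$-class $[atb^2a]_\beta$, i.e., into an identity of one of finitely many explicitly controlled shapes; the exclusion of $\overline{\mathbb A^1}$ is exploited similarly. These specific identities are then fed into previously established finite-basis criteria for varieties satisfying $xtx\approx x^2tx\approx xtx^2$ (developed in \cite{Sapir-15} and \cite{Sapir-21}), rather than into an ad hoc confluence argument. Two further loose ends in your plan: the case $\mathbb Q^1\not\subseteq\mathbb V$ is dismissed in one clause but still needs an argument, and your candidate basis ``all identities of $\mathbb V$ in at most $N$ letters'' is only known to be finite up to equivalence after one already understands $\Id(\mathbb V)$ rather well, so it cannot simply be postulated.
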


We say that a set of words $W \subseteq \mathfrak A^{\ast}$ is {\em stable with respect to a monoid variety $\mathbb V$} if  ${\bf v} \in W$ whenever ${\bf u} \in W$ and $\mathbb V$ satisfies ${\bf u} \approx {\bf v}$. Recall that a word ${\bf u} \in  \mathfrak A^{\ast}$ is an \textit{isoterm} \cite{Perkins-69} for $\mathbb V$ if the set $\{ {\bf u} \}$ is stable with respect to $\mathbb V$.

If $\tau$ is an equivalence relation on the free monoid $\mathfrak A^{\ast}$ and $\mathbb V$ is a monoid variety, then a word ${\bf u} \in \mathfrak A^{\ast}$ is said to be  a $\tau$-\textit{term} for $\mathbb V$ if ${\bf u} \mathrel{\tau} {\bf v}$ whenever $\mathbb V$ satisfies ${\bf u} \approx {\bf v}$. 
Notice that if $W \subseteq \mathfrak A^{\ast}$ forms a single $\tau$-class, then $W$ is stable with respect to $\mathbb V$ if and only if every word in ${\bf u} \in W$ is a $\tau$-term for $\mathbb V$.
If $\tau$ is an equivalence relation on $\mathfrak A^{\ast}$ and $\mathbf w\in\mathfrak A^{\ast}$, then $[\mathbf w]_\tau$ denotes the $\tau$-class of $\mathbf w$.

We use regular expressions to describe sets of words, in particular, congruence classes. 
For brevity, we put $a^+=\{a\}^+$ and $a^\ast=\{a\}^\ast$ for any $a\in\mathfrak A$.
For example, $[atb^2a ]_\beta = a^+tbb^+a \{a,b\}^{\ast} \cup  a^+tb^+a^+b \{a,b\}^{\ast}$, where $\beta$ be the fully invariant congruence of $\mathbb E^1$.

\begin{fact}[\mdseries{\cite[Corollary~3.6,~ Theorem~4.1(iv)]{Sapir-21}; \cite[Example~4.2]{Sapir-23}}]
\label{F: E2}
Given a monoid variety $\mathbb V$ we have:
\begin{itemize}
\item[\textup{(i)}] $\mathbb V$ contains $\mathbb A_0^1$ if and only if  the set $aa^+bb^+$ is stable with respect to $\mathbb V$;
\item[\textup{(ii)}] $\mathbb V$ contains $\mathbb E^1 \{\sigma_2\}$ if and only if  $\beta$-class $[at b^2 a]_{\beta}$ is stable with respect to $\mathbb V$.\qed
\end{itemize}
\end{fact}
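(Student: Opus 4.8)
Both assertions are ``membership versus stability'' equivalences of a standard kind, so I would attack each one by the usual two steps: for the forward implication, substitute into an appropriate generator; for the backward implication, lift a failing substitution up to an identity that violates the stability in question. The syntactic descriptions of $\mathbb Q^1$, $\mathbb L_2^1$ and $\mathbb E^1$ in Facts~\ref{F: Q}, \ref{F: L2} and~\ref{F: E} are the only external inputs.

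\emph{The ``only if'' direction of~(i).} Write $A_0^1=\{1,a,b,ba,0\}$ with $a^2=a$, $b^2=b$, $ab=0$ (so $ba$ is nilpotent of index~$2$), and let $\psi\colon\{a,b\}^\ast\to A_0^1$ be the homomorphism fixing $a$ and $b$; it is onto, and $\psi(\mathbf w)=0$ precisely when $\mathbf w\notin b^\ast a^\ast$. Suppose $\mathbb A_0^1\subseteq\mathbb V$ and $\mathbb V\models a^mb^n\approx\mathbf v$ with $m,n\ge2$, so that $A_0^1\models a^mb^n\approx\mathbf v$. Running over the letters, sending one of them to $a$ and all others to $1$, gives $\con(\mathbf v)=\{a,b\}$. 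The substitution $a\mapsto ba$, $b\mapsto 1$ sends $a^mb^n$ to $(ba)^m=0$, hence sends $\mathbf v$ to $(ba)^{k}$ where $k$ is the number of occurrences of $a$ in $\mathbf v$, forcing $k\ge2$; symmetrically $\mathbf v$ has at least two occurrences of $b$. Finally the ``swap'' substitution $a\mapsto b$, $b\mapsto a$ sends $a^mb^n$ to $ba\ne 0$, hence sends $\mathbf v$ to a nonzero element, which forces $\mathbf v\in a^\ast b^\ast$. Combining, $\mathbf v=a^{m'}b^{n'}$ with $m',n'\ge 2$, i.e.\ $\mathbf v\in aa^+bb^+$.

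\emph{The ``if'' direction of~(i).} Argue contrapositively: assume $A_0^1\notin\mathbb V$ and fix an identity $\mathbf p\approx\mathbf q$ holding in $\mathbb V$ but failing in $A_0^1$ under a substitution $\varphi$; after deleting every letter that $\varphi$ sends to $1$ we may assume $\varphi$ maps all letters of $\mathbf p\mathbf q$ into $\{a,b,ba,0\}$. Choosing an endomorphism $\theta$ of $\mathfrak A^\ast$ with each $\theta(x)$ a word over $\{a,b\}$ and $\psi(\theta(x))=\varphi(x)$, we obtain $\mathbb V\models\theta(\mathbf p)\approx\theta(\mathbf q)$ while $\psi(\theta(\mathbf p))=\varphi(\mathbf p)\ne\varphi(\mathbf q)=\psi(\theta(\mathbf q))$. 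Now one turns this identity between two words over $\{a,b\}$ with distinct $\psi$-images into one that violates the stability of $aa^+bb^+$: multiplying both sides by $a^2$ on the left and $b^2$ on the right and applying $a\mapsto a^2$, $b\mapsto b^2$ disposes of all cases in which the two sides do not both lie in $a^\ast b^\ast$; and in the remaining degenerate cases the distinctness of the $\psi$-images forces $\mathbb V$ to satisfy an identity such as $xy\approx yx$, $x^ky\approx x^\ell$, or $x^k\approx 1$, each of which by itself lets one move $a^2b^2$ out of $aa^+bb^+$ (by reordering, or by deleting or appending a block of $a$'s or $b$'s). Running over the finitely many values of the pair $(\psi(\theta(\mathbf p)),\psi(\theta(\mathbf q)))$ yields in each case an identity $\mathbf u\approx\mathbf v$ of $\mathbb V$ with $\mathbf u\in aa^+bb^+$ and $\mathbf v\notin aa^+bb^+$, contradicting stability.

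\emph{Part~(ii) and the main obstacle.} Part~(ii) follows the same template. First, Facts~\ref{F: Q} and~\ref{F: E} identify $[atb^2a]_\beta$ with the displayed set $a^+tbb^+a\{a,b\}^\ast\cup a^+tb^+a^+b\{a,b\}^\ast$ (the simple part must be $\{t\}$, so each of $a$, $b$ occurs at least twice; the block before $t$ is a power of $a$, the block after $t$ starts with $b$, contains $a$, and has at least two $b$'s), and this set is a single $\beta$-class; a routine analysis of where the two simple letters of $\sigma_2$ can match shows that adjoining $\sigma_2$ to $\mathbb E^1$ does not enlarge the class of $atb^2a$, which already gives the ``only if'' direction once $\mathbb E^1\{\sigma_2\}\subseteq\mathbb V$. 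The ``if'' direction repeats the lifting argument above: from an identity holding in $\mathbb V$ but not in $\mathbb E^1\{\sigma_2\}$ one lifts through the natural projection of $\{a,b,t\}^\ast$ onto the ``syntactic'' monoid attached to $[atb^2a]_\beta$, Fact~\ref{F: E} being used to follow, block by block, where a failing substitution lands. I expect the real difficulty in both parts to be precisely this last step of the ``if'' direction --- squeezing an arbitrary failing identity of $\mathbb V$ into one whose two sides lie in the very rigid sets $aa^+bb^+$, respectively $[atb^2a]_\beta$, rather than merely exhibiting some violation of content or multiplicity --- which is the detailed normal-form bookkeeping carried out in~\cite{Sapir-21} and~\cite{Sapir-23}.
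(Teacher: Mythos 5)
This statement is a \emph{Fact} that the paper imports verbatim from \cite[Corollary~3.6, Theorem~4.1(iv)]{Sapir-21} and \cite[Example~4.2]{Sapir-23}; the paper contains no proof of it, so there is nothing in-paper to compare your argument against, and it has to be judged on its own. Your part~(i) is essentially a complete and correct proof: the three substitutions $a\mapsto a,\ b\mapsto 1$ (content), $a\mapsto ba,\ b\mapsto 1$ (multiplicity), and the swap $a\leftrightarrow b$ (membership in $a^\ast b^\ast$) do pin down $\mathbf v\in aa^+bb^+$ in the forward direction, and the contrapositive lifting through $\psi$ followed by bordering with $a^2$, $b^2$ and doubling letters can indeed be pushed through all the finitely many pairs of distinct $\psi$-values (your case list is loose --- e.g.\ the case where neither side lies in $a^\ast b^\ast$ needs the swap before the bordering trick, and the cases involving the value $0$ on an $a^\ast b^\ast$ word are cleanest after substituting $1$ for one letter --- but nothing breaks). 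Your ``only if'' direction of~(ii) is also sound: a content-changing instance of $xt_1yt_2x^2y^2$ inside a word of $[atb^2a]_\beta$ cannot absorb the unique simple letter $t$ into $\Theta(x)$ or $\Theta(y)$, so the permuted suffix sits inside a single block after a prefix already containing all its letters, and $\ini$ of that block is preserved.

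The genuine gap is the ``if'' direction of~(ii). Showing that stability of the single $\beta$-class $[atb^2a]_\beta$ forces $\mathbb E^1\{\sigma_2\}\subseteq\mathbb V$ is not a ``lifting through the syntactic monoid'' of the kind that worked for $A_0^1$ in part~(i): there is no finite monoid written down here whose multiplication table you can pull back, and the sentence you offer in its place is a description of what must be done rather than an argument. Concretely, one must take an arbitrary identity $\mathbf u\approx\mathbf v$ of $\mathbb V$ that is \emph{not} a consequence of $\{xtx\approx xtx^2\approx x^2tx,\ xy^2x\approx x^2y^2,\ \sigma_2\}$, and from it manufacture (by substitution, deletion of letters, and bordering) an identity $\mathbf u'\approx\mathbf v'$ of $\mathbb V$ with $\mathbf u'\in[atb^2a]_\beta$ and $\mathbf v'\notin[atb^2a]_\beta$; this presupposes an explicit syntactic description of $\Id(\mathbb E^1\{\sigma_2\})$ refining Fact~\ref{F: E}, and a case analysis over the ways an identity can fail it. That is precisely the content of \cite[Theorem~4.1(iv)]{Sapir-21} and \cite[Example~4.2]{Sapir-23}, and you have deferred it rather than supplied it --- which you candidly acknowledge, but it means the proposal does not constitute a proof of part~(ii).
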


The next statement is a special case of Lemma~2.4 in \cite{Sapir-23}.

\begin{fact}      
\label{F: abt}
Suppose that $[atb^2a ]_\beta$ is  stable with respect to a monoid variety $\mathbb V$.  Then  ${\bf u} \in \{a,b, t\}^{\ast}$  is $\beta$-term for $\mathbb V$ in each of the following cases:
\begin{itemize}
\item[\textup{(i)}]$\con ({\bf u}) \subset \{a,b, t\}$;
\item[\textup{(ii)}] ${\bf u} = {\bf a}t{\bf b}$  such that  ${\bf a}, {\bf b}  \in \{a,b \}^{\ast}$
and either $\bf a$ or $\bf b$ contains no occurrences of  $a$.\qed
\end{itemize}
\end{fact}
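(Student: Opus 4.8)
The plan is to reduce the entire statement to understanding one application of the identity $\sigma_2$. First, by Fact~\ref{F: E2}(ii), the hypothesis that $[atb^2a]_\beta$ is stable with respect to $\mathbb V$ is equivalent to $\mathbb E^1\{\sigma_2\}\subseteq\mathbb V$. Consequently every identity of $\mathbb V$ holds in $\mathbb E^1\{\sigma_2\}$, so it suffices to prove that each word $\mathbf u$ as in~(i) or~(ii) is a $\beta$-term for $\mathbb E^1\{\sigma_2\}$ itself. Since $\beta$ is the fully invariant congruence of $\mathbb E^1$, the fully invariant congruence of $\mathbb E^1\{\sigma_2\}$ is generated by $\beta$ together with $\sigma_2$; hence $\mathbb E^1\{\sigma_2\}$ satisfies $\mathbf u\approx\mathbf v$ exactly when $\mathbf v$ is reachable from $\mathbf u$ by a finite chain of $\beta$-steps and \emph{$\sigma_2$-steps}, where a $\sigma_2$-step replaces, inside a word, a factor of the form $\mathbf c\,\mathbf d_1\,\mathbf e\,\mathbf d_2\,\mathbf c^2\mathbf e^2$ by $\mathbf c\,\mathbf d_1\,\mathbf e\,\mathbf d_2\,\mathbf e^2\mathbf c^2$ (or conversely), for arbitrary words $\mathbf c,\mathbf e,\mathbf d_1,\mathbf d_2$. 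Such a step is idle when $\mathbf c$ or $\mathbf e$ is empty, so one assumes both nonempty; the crucial observation to record here is that the part that actually moves, $\mathbf c^2\mathbf e^2\leftrightarrow\mathbf e^2\mathbf c^2$, has content $\con(\mathbf c)\cup\con(\mathbf e)$ and is preceded \emph{inside the factor} by $\mathbf c\,\mathbf d_1\,\mathbf e\,\mathbf d_2$, which already contains one occurrence of $\mathbf c$ and one of $\mathbf e$. After this reduction an easy induction shows that it suffices to prove: whenever $\mathbf w\mathrel\beta\mathbf u$ and $\mathbf w'$ is obtained from $\mathbf w$ by one $\sigma_2$-step, then $\mathbf w\mathrel\beta\mathbf w'$. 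Throughout I would use the criterion, immediate from Facts~\ref{F: Q},~\ref{F: L2} and~\ref{F: E}, that $\mathbf w\mathrel\beta\mathbf w'$ holds if and only if $\mathbf w$ and $\mathbf w'$ have the same simple letters in the same order, corresponding blocks have equal content, and corresponding blocks have equal value of $\ini$.

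For part~(i), where $\con(\mathbf u)$ (hence $\con(\mathbf w)$) is contained in a two-element subset of $\{a,b,t\}$, the argument is short. If $\con(\mathbf c)\cup\con(\mathbf e)$ consists of a single letter, then $\mathbf c^2\mathbf e^2=\mathbf e^2\mathbf c^2$ and $\mathbf w'=\mathbf w$. Otherwise $\con(\mathbf c)\cup\con(\mathbf e)$ equals the whole (two-letter) content of $\mathbf w$; then both these letters occur at least twice in $\mathbf w$, so $\mathbf w$ and $\mathbf w'$ have no simple letters and each is a single block, and both letters occur to the left of the moving part (inside the preceding $\mathbf c\,\mathbf d_1\,\mathbf e\,\mathbf d_2$), so $\ini$ is unchanged. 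By the criterion, $\mathbf w\mathrel\beta\mathbf w'$.

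For part~(ii), write $\mathbf u=\mathbf a\,t\,\mathbf b$ with $\mathbf a,\mathbf b\in\{a,b\}^{\ast}$ and, say, $\mathbf a$ free of $a$ (the case where $\mathbf b$ is free of $a$ being analogous, reading words from the other end). Every letter of $\con(\mathbf c)\cup\con(\mathbf e)$ is multiple in $\mathbf w$, hence avoids the simple letters of $\mathbf w$; if $a$ or $b$ were also simple in $\mathbf u$ then $\con(\mathbf c)\cup\con(\mathbf e)$ would have at most one letter and the step would be idle, so I would reduce to the case where $t$ is the only simple letter of $\mathbf u$. Then $\mathbf w=\mathbf c_0\,t\,\mathbf c_1$ with $\con(\mathbf c_0)=\con(\mathbf a)\subseteq\{b\}$ and $\con(\mathbf c_1)=\con(\mathbf b)\subseteq\{a,b\}$, the $\ini$'s matching those of $\mathbf a,\mathbf b$. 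The moving part is a factor avoiding $t$, so it lies wholly inside $\mathbf c_0$ or wholly inside $\mathbf c_1$; inside $\mathbf c_0$, a power of $b$, the swap is idle. If the swap is inside $\mathbf c_1$, producing $\mathbf c_1'$, the point is to check $\ini(\mathbf c_1')=\ini(\mathbf c_1)$: since $\con(\mathbf c_1)$ has at most two letters, this could fail only if the moving part were a prefix of $\mathbf c_1$ containing both $a$ and $b$; but if the moving part is a prefix of $\mathbf c_1$, then the preceding $\mathbf c\,\mathbf d_1\,\mathbf e\,\mathbf d_2$ is a suffix of $\mathbf c_0\,t$, forcing $\con(\mathbf c)\cup\con(\mathbf e)\subseteq\{b\}$ and making the swap idle --- a contradiction. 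Hence $\mathbf w\mathrel\beta\mathbf w'$. (In the symmetric sub-case $\mathbf c_0=\mathbf a$ is a prefix of $\mathbf w$, so anything preceding the moving part inside $\mathbf c_0$ already includes $\mathbf c\,\mathbf d_1\,\mathbf e\,\mathbf d_2$, and no new first occurrence is created.)

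The only genuinely nontrivial ingredient is the observation highlighted in the first paragraph; the step I expect to be the main obstacle is the case analysis in~(ii), where one must locate the unique simple letter $t$ relative to the $\sigma_2$-instance. The hypothesis that one of $\mathbf a,\mathbf b$ is free of $a$ is exactly what is needed there: it guarantees that a block over $\{b\}$ alone cannot supply the two ``new'' letters that would be required to reverse the order of the first occurrences of $a$ and $b$. Since the block condition defining $\mathbb E^1$ is phrased via $\ini$ (equivalently $\mathbb L_2^1$) rather than $\fin$, the two sub-cases of~(ii) are not perfectly symmetric and must be argued with a slight left/right asymmetry.
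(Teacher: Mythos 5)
Your argument is correct. Note that the paper does not actually prove this Fact: it is imported verbatim as a special case of Lemma~2.4 of [Sapir-23] and stamped with \qed, so there is no in-paper proof to compare against; what you have written is a genuine self-contained verification. Your route --- translate the stability hypothesis into $\mathbb E^1\{\sigma_2\}\subseteq\mathbb V$ via Fact~\ref{F: E2}(ii), reduce to showing the words are $\beta$-terms for $\mathbb E^1\{\sigma_2\}$ itself, and then track a Birkhoff derivation step by step, checking that a single $\sigma_2$-step applied to any word in $[\mathbf u]_\beta$ stays inside the $\beta$-class --- is sound, and the two load-bearing observations are exactly right: (a) every letter of $\con(\mathbf c)\cup\con(\mathbf e)$ occurs three times in the substituted instance, hence lies in $\mul(\mathbf w)$, which kills all cases where some letter of $\{a,b\}$ is simple; and (b) the moving factor $\mathbf c^2\mathbf e^2$ is preceded \emph{within the instance} by a copy of $\mathbf c$ and of $\mathbf e$, so the order of first occurrences inside a block can only be disturbed when that preceding segment is forced outside the block, i.e.\ when the moving part is a prefix of the block --- and that is precisely what the hypothesis that one of $\mathbf a,\mathbf b$ avoids $a$ rules out. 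One small point worth making explicit in a polished write-up: in the sub-case where the prefix $\mathbf p$ of the block before the moving part is nonempty but contains only one of $a,b$, the order of first occurrences still cannot flip (the letter first seen in $\mathbf p$ stays first), so the reduction to ``$\mathbf p$ empty'' really is exhaustive; your sentence asserts this but compresses the reason. The asymmetry you flag between the two halves of (ii) is real and handled correctly: when the distinguished block is the leftmost one, the segment $\mathbf c\mathbf d_1\mathbf e\mathbf d_2$ is automatically inside it, whereas for the right block one needs $\con(\mathbf c_0)\subseteq\{b\}$ to exclude the bad prefix configuration.
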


\section{Varieties of idempotent monoids}
\label{sec: idempotent}

A complete description of the subvariety lattice of the variety of all idempotent monoids $\mathbb B^1$ was given by Wismath~\cite{Wismath-86}; see also~\cite[Section 5.5]{Almeida-94}.
To describe this lattice, for $n \ge 2$, define
\begin{align*}
\mathbb R_n^1 = \var\{x\approx x^2,\,\mathbf R_n\approx\mathbf S_n\},\\
\mathbb L_n^1 = \var\{x\approx x^2,\,\overline{\mathbf R_n}\approx\overline{\mathbf S_n}\},
\end{align*}
where
\begin{align*}
\mathbf R_n= &
\begin{cases}
x_2x_1&\text{ for }n=2,\\
x_1x_2x_3&\text{ for }n=3,\\
\mathbf R_{n-1}x_n&\text{ for even }n\ge 4,\\
x_n\mathbf R_{n-1}&\text{ for odd }n\ge 5
\end{cases}
\\
\text{and}\enskip\mathbf S_n= &
\begin{cases}
x_1x_2x_1&\text{ for }n=2,\\
x_1x_2x_3x_1x_3x_2x_3&\text{ for }n=3,\\
\mathbf S_{n-1}x_n\mathbf R_n&\text{ for even }n\ge 4,\\
\mathbf R_nx_n\mathbf S_{n-1}&\text{ for odd }n\ge 5,
\end{cases}
\end{align*}
while $\overline{\mathbf R_n}$ and $\overline{\mathbf S_n}$ are the mirror images of $\mathbf R_n$ and $\mathbf S_n$, respectively.
Let $\mathbb T$ denote the variety of trivial monoids.

\begin{prop}[\mdseries{\cite[Proposition~4.7]{Wismath-86}}]
\label{P: struct of L(B)}
The subvariety lattice of  $\mathbb B^1$ is given in Fig.~\ref{L(B)}.\qed
\end{prop}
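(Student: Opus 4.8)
The plan is to reconstruct the lattice one \emph{level} at a time, where the level of a subvariety measures the depth to which a recursive word invariant must be unfolded before the identities of that variety stop distinguishing words. Throughout I write $\mathbb B_n^1:=\mathbb L_n^1\vee\mathbb R_n^1$ and exploit the mirror-image anti-automorphism of $\mathfrak A^\ast$, which sends $\mathbf R_n\mapsto\overline{\mathbf R_n}$ and $\mathbf S_n\mapsto\overline{\mathbf S_n}$, hence $\mathbb R_n^1\mapsto\mathbb L_n^1$; this halves every verification below.

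First I would assemble the combinatorial machinery: the solution of the word problem for free idempotent monoids (the Green--Rees analysis of free bands, adjusted for the adjoined identity). For a nonempty word $\mathbf w$ let $\mathbf h(\mathbf w)$ be the prefix of $\mathbf w$ ending just before the first occurrence of the last letter of $\mathbf w$ to appear, and dually let $\mathbf t(\mathbf w)$ be the corresponding suffix; then $\mathbb B^1$ satisfies $\mathbf u\approx\mathbf v$ if and only if $\con(\mathbf u)=\con(\mathbf v)$, these two distinguished letters coincide, and $\mathbf h(\mathbf u)\approx\mathbf h(\mathbf v)$ and $\mathbf t(\mathbf u)\approx\mathbf t(\mathbf v)$ recursively. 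Truncating this recursion at depth $n$ while recording first occurrences (respectively, last occurrences) along the way is precisely the criterion for an identity to hold in $\mathbb L_n^1$ (respectively, $\mathbb R_n^1$); for $n=2$ this specialises to the $\ini$ and $\fin$ descriptions of Fact~\ref{F: L2}. Since free bands on finitely many generators are finite, $\mathbb B^1$ is locally finite, and its finite members satisfy $\mathbf R_n\approx\mathbf S_n$ and $\overline{\mathbf R_n}\approx\overline{\mathbf S_n}$ for all large enough $n$; hence $\mathbb B^1=\bigcup_{n\ge2}\mathbb B_n^1$, and since $\mathbb R_n^1\subseteq\mathbb B_n^1=\mathbb L_{n+1}^1\wedge\mathbb R_{n+1}^1\subseteq\mathbb L_{n+1}^1$ (claim~\textup{(a)} below), also $\mathbb B^1=\bigcup_{n\ge2}\mathbb L_n^1$.

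Next I would determine the finite lattices $\mathcal L(\mathbb B_n^1)$ by induction on $n$ and take their union. For $n=2$, Fact~\ref{F: L2} shows directly that $\mathbb{SL}$ (the variety generated by the two-element semilattice) is the unique atom, that $\mathbb L_2^1$ and $\mathbb R_2^1$ are incomparable and each covers $\mathbb{SL}=\mathbb L_2^1\wedge\mathbb R_2^1$, and that $\mathbb B_2^1=\mathbb L_2^1\vee\mathbb R_2^1$ covers each of them and has no further subvarieties; so $\mathcal L(\mathbb B_2^1)$ is the five-element lattice $\mathbb T\lessdot\mathbb{SL}\lessdot\{\mathbb L_2^1,\mathbb R_2^1\}\lessdot\mathbb B_2^1$, the bottom of Fig.~\ref{L(B)}. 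For the step from $n$ to $n+1$ the claims to prove are: \textup{(a)} $\mathbb B_n^1=\mathbb L_{n+1}^1\wedge\mathbb R_{n+1}^1$ and $\mathbb B_{n+1}^1=\mathbb L_{n+1}^1\vee\mathbb R_{n+1}^1$; \textup{(b)} $\mathbb L_{n+1}^1$ and $\mathbb R_{n+1}^1$ are incomparable, each covers $\mathbb B_n^1$ and is covered by $\mathbb B_{n+1}^1$; \textup{(c)} every subvariety of $\mathbb B_{n+1}^1$ either already lies in $\mathcal L(\mathbb B_n^1)$ or contains $\mathbb L_{n+1}^1$ or $\mathbb R_{n+1}^1$. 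Given these, $\mathcal L(\mathbb B_{n+1}^1)$ arises from $\mathcal L(\mathbb B_n^1)$ by attaching the diamond $\{\mathbb L_{n+1}^1,\mathbb R_{n+1}^1\}$ at the top, i.e.\ the next rung of the ladder in Fig.~\ref{L(B)}.

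The main obstacle is the combinatorial content of \textup{(b)} and \textup{(c)}. One needs a deduction calculus for identities of idempotent monoids --- analogous to the classical rewriting systems for free bands --- that rewrites an arbitrary identity valid in $\mathbb B^1$ into a product of ``elementary'' identities while controlling the recursion depth at which each step is used; the goal is to show that, modulo $\mathbb B_n^1$, any identity failing in $\mathbb B_n^1$ but holding in some subvariety of $\mathbb B_{n+1}^1$ is equivalent to $\mathbf R_{n+1}\approx\mathbf S_{n+1}$ or to $\overline{\mathbf R_{n+1}}\approx\overline{\mathbf S_{n+1}}$. This is what forces the lattice to be ``thin'' at each level and pins down the stated covers. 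By contrast, the estimate in the second paragraph and the meet/join computations in \textup{(a)} are routine once the depth-$n$ truncated invariant is in hand, reducing --- exactly as in Fact~\ref{F: L2} --- to bookkeeping of first and last occurrences of letters.
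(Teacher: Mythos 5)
The paper does not actually prove this proposition: it is quoted from Wismath \cite{Wismath-86}, which in turn rests on the classification of all band varieties due to Birjukov, Fennemore and Gerhard. Judged as a self-contained reconstruction of that classification, your proposal has a genuine gap: the entire difficulty of the theorem is concentrated in your claims \textup{(b)} and \textup{(c)}, and for these you only state what ``one needs'' --- a deduction calculus showing that, modulo $x\approx x^2$ and the identities of $\mathbb L_n^1\vee\mathbb R_n^1$, any identity separating a subvariety of $\mathbb L_{n+1}^1\vee\mathbb R_{n+1}^1$ from $\mathbb R_{n+1}^1$ already implies $\overline{\mathbf R_{n+1}}\approx\overline{\mathbf S_{n+1}}$ (and dually). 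Without that argument nothing rules out further subvarieties squeezed between consecutive rungs or incomparable to them; the ``thinness'' of Fig.~\ref{L(B)} is exactly what remains unproved, and this step is the combinatorial core occupying most of Fennemore's and Gerhard's papers. It cannot be deferred as an obstacle to be overcome later.

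Two further assertions in the outline are stated rather than established. First, the claim that truncating the Green--Rees recursion at depth $n$ ``is precisely the criterion'' for an identity to lie in $\Id(\mathbb L_n^1)$ or $\Id(\mathbb R_n^1)$ is itself a nontrivial theorem (compare Fact~\ref{F: R31}, which is the case $n=3$ and is cited from Fennemore's thesis), not a definition. Second, the assertion that every finite idempotent monoid satisfies $\mathbf R_n\approx\mathbf S_n$ and $\overline{\mathbf R_n}\approx\overline{\mathbf S_n}$ for all large $n$ --- which you use to get $\mathbb B^1=\bigcup_{n\ge2}(\mathbb L_n^1\vee\mathbb R_n^1)$ --- is normally derived \emph{from} the classification you are trying to prove, so as written the argument risks circularity. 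The overall architecture (Green--Rees invariant, mirror duality, induction up the ladder of levels) is the correct one and matches the literature, but what is missing is precisely the hard part.
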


\begin{figure}[htb]
\unitlength=1mm
\linethickness{0.4pt}
\begin{center}
\begin{picture}(60,94)
\put(10,23){\circle*{1.33}}
\put(10,43){\circle*{1.33}}
\put(10,63){\circle*{1.33}}
\put(20,3){\circle*{1.33}}
\put(20,13){\circle*{1.33}}
\put(20,33){\circle*{1.33}}
\put(20,53){\circle*{1.33}}
\put(20,73){\circle*{1.33}}
\put(20,89){\circle*{1.33}}
\put(30,23){\circle*{1.33}}
\put(30,43){\circle*{1.33}}
\put(30,63){\circle*{1.33}}
\gasset{AHnb=0,linewidth=0.4}
\drawline(20,3)(20,13)(10,23)(30,43)(10,63)(22,75)
\drawline(20,13)(30,23)(10,43)(30,63)(18,75)
\put(20,84){\makebox(0,0)[cc]{$\vdots$}}
\put(31,43){\makebox(0,0)[lc]{$\mathbb R_3^1$}}
\put(9,43){\makebox(0,0)[rc]{$\mathbb L_3^1$}}
\put(9,63){\makebox(0,0)[rc]{$\mathbb R_4^1$}}
\put(31,63){\makebox(0,0)[lc]{$\mathbb L_4^1$}}
\put(20,92){\makebox(0,0)[cc]{$\mathbb B^{1}$}}
\put(9,23){\makebox(0,0)[rc]{$\mathbb R_2^1$}}
\put(31,23){\makebox(0,0)[lc]{$\mathbb L_2^1$}}
\put(21,12){\makebox(0,0)[lc]{$\mathbb M(\{1\})=\var\{x\approx x^2,  xy\approx yx\}$}}
\put(22,33){\makebox(0,0)[lc]{$\mathbb L_2^1\vee \mathbb R_2^1=\var\{x\approx x^2,  \overline{{\bf R}_2} t {\bf R}_2 \approx {\bf S}_2 t \overline{{\bf S}_2}\}$}}
\put(22,53){\makebox(0,0)[lc]{$\mathbb L_3^1\vee \mathbb R_3^1=\var\{x\approx x^2,  {\bf R}_3 t \overline{{\bf R}_3} \approx {\bf S}_3 t \overline{{\bf S}_3} \}$}}
\put(22,73){\makebox(0,0)[lc]{$\mathbb L_4^1\vee \mathbb R_4^1=\var\{x\approx x^2,  \overline{{\bf R}_4} t {\bf R}_4 \approx {\bf S}_4 t \overline{{\bf S}_4}\}$}}
\put(20,0){\makebox(0,0)[cc]{$\mathbb T$}}
\end{picture}
\end{center}
\caption{The subvariety lattice of $\mathbb B^1$}
\label{L(B)}
\end{figure}
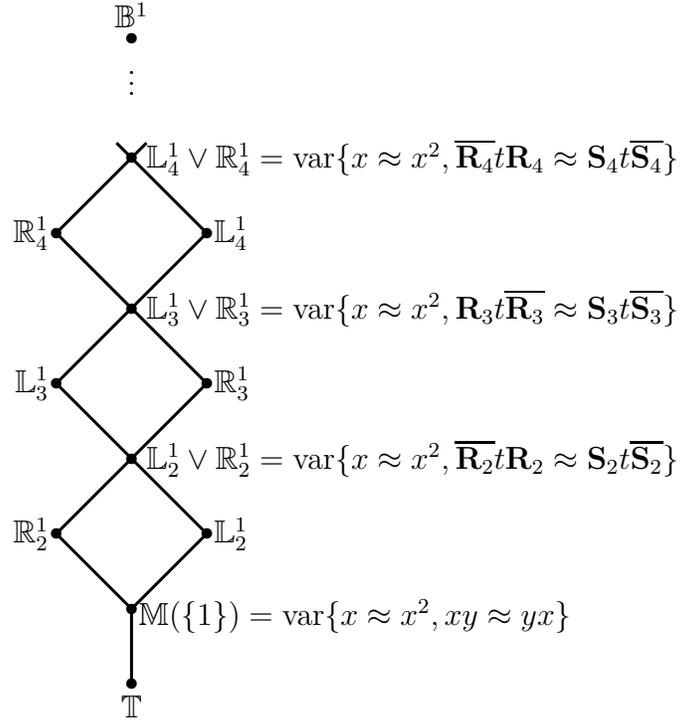

Proposition~\ref{P: struct of L(B)} gives us a freedom in choosing the defining identities for varieties of idempotent monoids as follows.

\begin{cor} 
\label{C: struct of L(B)}
Let $k \ge 2$, ${\bf u} \approx {\bf v}$ be any identity which holds in $\mathbb R_k^1$ and fails in $\mathbb L_k^1$, and ${\bf w} \approx {\bf p}$ any identity which holds in  $\mathbb L^1_k \vee \mathbb R^1_k$ but  fails in $\mathbb L^1_{k+1}$ and in  $\mathbb R^1_{k+1}$. Then
\[
\begin{aligned}
&\mathbb R^1_k = \var  \{x \approx x^2,  {\bf u} \approx {\bf v}\},\\
&\mathbb R^1_k \vee \mathbb L^1_k   = \var  \{x \approx x^2,  {\bf w} \approx {\bf p}\} =  \mathbb R^1_{k+1} \wedge \mathbb L^1_{k+1}. \quad\quad\quad\quad\quad \quad\quad\quad\quad\quad\quad\quad\quad\qed
\end{aligned}
\] 
\end{cor}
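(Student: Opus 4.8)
The plan is to pin down each of the two varieties $\var\{x\approx x^2,\mathbf u\approx\mathbf v\}$ and $\var\{x\approx x^2,\mathbf w\approx\mathbf p\}$ by its position in the subvariety lattice of $\mathbb B^1$, which is completely described by Proposition~\ref{P: struct of L(B)} and pictured in Fig.~\ref{L(B)}. The key structural remark to extract from that picture is its \emph{chain-of-diamonds} shape: for every $k\ge2$ the variety $\mathbb R_k^1$ has a unique cover inside $\mathbb B^1$, namely $\mathbb R_k^1\vee\mathbb L_k^1$; furthermore $\mathbb R_k^1\vee\mathbb L_k^1=\mathbb R_{k+1}^1\wedge\mathbb L_{k+1}^1$ and its only two covers are $\mathbb R_{k+1}^1$ and $\mathbb L_{k+1}^1$ (and symmetrically with $\mathbb L$ and $\mathbb R$ exchanged). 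I would record, as an inline observation, the consequence that the subvarieties of $\mathbb B^1$ containing $\mathbb R_k^1$ are exactly $\mathbb R_k^1$ itself together with all subvarieties containing $\mathbb R_k^1\vee\mathbb L_k^1$, and that each of the latter contains $\mathbb L_k^1$.

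For the first assertion I would set $\mathbb V=\var\{x\approx x^2,\mathbf u\approx\mathbf v\}$. Since $\mathbf u\approx\mathbf v$ holds in $\mathbb R_k^1$, we have $\mathbb R_k^1\subseteq\mathbb V\subseteq\mathbb B^1$, so $\mathbb V$ appears in Fig.~\ref{L(B)}. By the observation above, either $\mathbb V=\mathbb R_k^1$, or $\mathbb V$ contains $\mathbb R_k^1\vee\mathbb L_k^1$ and hence $\mathbb L_k^1$; the second case is ruled out because $\mathbf u\approx\mathbf v$ fails in $\mathbb L_k^1$. Thus $\mathbb V=\mathbb R_k^1$.

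For the second assertion I would set $\mathbb W=\var\{x\approx x^2,\mathbf w\approx\mathbf p\}$. Since $\mathbf w\approx\mathbf p$ holds in $\mathbb R_k^1\vee\mathbb L_k^1$, hence in both $\mathbb R_k^1$ and $\mathbb L_k^1$, we get $\mathbb R_k^1\vee\mathbb L_k^1\subseteq\mathbb W\subseteq\mathbb B^1$. Reading off Fig.~\ref{L(B)} once more, $\mathbb R_k^1\vee\mathbb L_k^1$ equals $\mathbb R_{k+1}^1\wedge\mathbb L_{k+1}^1$, and any subvariety of $\mathbb B^1$ strictly above $\mathbb R_k^1\vee\mathbb L_k^1$ contains $\mathbb R_{k+1}^1$ or $\mathbb L_{k+1}^1$. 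As $\mathbf w\approx\mathbf p$ fails in both $\mathbb R_{k+1}^1$ and $\mathbb L_{k+1}^1$, this forces $\mathbb W=\mathbb R_k^1\vee\mathbb L_k^1=\mathbb R_{k+1}^1\wedge\mathbb L_{k+1}^1$, which is the displayed chain of equalities.

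I do not expect a genuine obstacle: the whole content is an inspection of Wismath's lattice, and the argument reduces to two short deductions once the chain-of-diamonds remark is in place. The only point needing a moment of care is the exceptional bottom of Fig.~\ref{L(B)}, where $\mathbb R_2^1\wedge\mathbb L_2^1=\mathbb M(\{1\})$ rather than a join of lower terms; this does not interfere, since both deductions use only the covers \emph{above} $\mathbb R_k^1$ and above $\mathbb R_k^1\vee\mathbb L_k^1$. It is also worth stating the structural remark as its own sentence, so that the uses of the failure hypotheses (for $\mathbf u\approx\mathbf v$ in $\mathbb L_k^1$, and for $\mathbf w\approx\mathbf p$ in $\mathbb R_{k+1}^1$ and $\mathbb L_{k+1}^1$) become unambiguous.
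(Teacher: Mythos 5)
Your argument is correct and is exactly the reasoning the paper intends: the corollary is stated with no proof because it is read off directly from Wismath's lattice (Proposition~\ref{P: struct of L(B)}), using precisely the chain-of-diamonds observation that every subvariety of $\mathbb B^1$ strictly containing $\mathbb R_k^1$ contains $\mathbb L_k^1$, and every one strictly containing $\mathbb R_k^1\vee\mathbb L_k^1=\mathbb R_{k+1}^1\wedge\mathbb L_{k+1}^1$ contains $\mathbb R_{k+1}^1$ or $\mathbb L_{k+1}^1$. No issues.
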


We use $\Id(\mathbb V)$ to denote the set of all identities of the variety $\mathbb V$.
Let $\ell({\bf u})$ denote the maximal prefix of $\bf u$ which contains all letters in $\bf u$ but one.
Dually,  $r({\bf u})$ denotes the maximal suffix of $\bf u$ which contains all letters in $\bf u$ but one.
The following result on the variety of all idempotent semigroups $\mathbb B$ is useful here because $\Id(\mathbb B)=\Id(\mathbb B^1)$.

\begin{fact}[\mdseries{\cite{Green-Rees-52}}]
\label{F: x=xx}
An identity ${\bf u} \approx {\bf v}$ holds in $\mathbb B^{1}$ if and only if  $\con({\bf u}) = \con ({\bf v})$ and the identities  $\ell({\bf u}) \approx \ell({\bf v})$ and $r({\bf u}) \approx r({\bf v})$ hold in $\mathbb B^{1}$.\qed
\end{fact}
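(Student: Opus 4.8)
The plan is to show that $\sim$, the relation ``$\mathbf u\approx\mathbf v$ holds in $\mathbb B^1$'' (equivalently in $\mathbb B$, since $\Id(\mathbb B)=\Id(\mathbb B^1)$), coincides with the relation $\equiv$ defined by: $\con(\mathbf u)=\con(\mathbf v)$, $\ell(\mathbf u)\sim\ell(\mathbf v)$, and $r(\mathbf u)\sim r(\mathbf v)$. Since $\mathbb B=\var\{x\approx x^2\}$, two words are $\sim$-related exactly when one is obtained from the other by finitely many elementary moves, an elementary move replacing a nonempty factor $\mathbf p$ by $\mathbf p\mathbf p$ or vice versa; in particular $\mathbf p^2\sim\mathbf p$ for every word $\mathbf p$.

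To prove $\sim\subseteq\equiv$ I would first note that $\equiv$ is an equivalence relation (transitivity using transitivity of $\sim$), so it suffices to check that one elementary move $\mathbf a\mathbf p\mathbf b\to\mathbf a\mathbf p\mathbf p\mathbf b$ stays within $\equiv$. Content is visibly preserved. For the $\ell$-coordinate, let $\hat c$ be the letter of $\con(\mathbf a\mathbf p\mathbf b)$ whose first occurrence is latest: if $\con(\mathbf b)\subseteq\con(\mathbf a\mathbf p)$ then $\hat c$ already occurs in $\mathbf a\mathbf p$ and the two words have literally the same $\ell$, whereas otherwise $\hat c$ occurs in $\mathbf b$ and the two values of $\ell$ are $\mathbf a\mathbf p\mathbf b'$ and $\mathbf a\mathbf p\mathbf p\mathbf b'$ for the prefix $\mathbf b'$ of $\mathbf b$ ending just before the first $\hat c$, hence differ by one elementary move; the $r$-coordinate is dual.

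To prove $\equiv\subseteq\sim$ I would induct on $n=|\con(\mathbf u)|$; the cases $n\le1$ are immediate, both words reducing to a single letter or being empty. For $n\ge2$, let $\hat c$ be the unique letter of $\con(\mathbf u)$ absent from $\ell(\mathbf u)$; it is also absent from $\ell(\mathbf v)$, since $\ell(\mathbf u)\sim\ell(\mathbf v)$ forces $\con(\ell(\mathbf u))=\con(\ell(\mathbf v))$ while $\con(\mathbf u)=\con(\mathbf v)$. Define $\hat d$ dually from $r$. Using only $\mathbf p^2\sim\mathbf p$, I would check the sandwich identity
\[
\mathbf u\ \sim\ \ell(\mathbf u)\,\hat c\cdot\mathbf u\cdot\hat d\,r(\mathbf u)
\]
by writing $\mathbf u=\ell(\mathbf u)\hat c\mathbf m=\mathbf m'\hat d\,r(\mathbf u)$, substituting these into the right-hand side, and collapsing the squares $(\ell(\mathbf u)\hat c)^2$ and $(\hat d\,r(\mathbf u))^2$; the same holds for $\mathbf v$. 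Since $\ell(\mathbf u)\sim\ell(\mathbf v)$ and $r(\mathbf u)\sim r(\mathbf v)$ are given and involve only words of content $n-1$, rewriting them inside the sandwich identities gives $\mathbf u\sim\ell(\mathbf v)\,\hat c\cdot\mathbf u\cdot\hat d\,r(\mathbf v)$ and $\mathbf v\sim\ell(\mathbf v)\,\hat c\cdot\mathbf v\cdot\hat d\,r(\mathbf v)$. The induction is then closed by deleting the interior copies of $\mathbf u$ and $\mathbf v$ via the classical band identity: if $\con(\mathbf a)=\con(\mathbf z)=\con(\mathbf b)$ then $\mathbf a\mathbf z\mathbf b\sim\mathbf a\mathbf b$ (applied with $\mathbf a=\ell(\mathbf v)\hat c$ and $\mathbf b=\hat d\,r(\mathbf v)$), yielding $\mathbf u\sim\ell(\mathbf v)\hat c\,\hat d\,r(\mathbf v)\sim\mathbf v$.

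The hard part is exactly this last deletion identity, which is the technical core going back to Green and Rees — equivalently the absorption identity $\mathbf p\mathbf q\mathbf p\approx\mathbf p$, valid in $\mathbb B$ whenever $\con(\mathbf q)\subseteq\con(\mathbf p)$; I would establish it by a separate, somewhat delicate induction on the length of the absorbed factor, using $x\approx x^2$ and ultimately reducing to manipulations around a single letter. A secondary subtlety is that rewriting $\ell(\mathbf u)$ and $r(\mathbf u)$ ``inside'' $\mathbf u$ interacts cleanly with both decompositions only when the first occurrence of $\hat c$ precedes the last occurrence of $\hat d$; the opposite configuration can be handled by a short separate argument, or bypassed entirely by instead fixing for each $\sim$-class the canonical word $\kappa$ defined recursively by $\kappa(\mathbf w)=\kappa(\ell(\mathbf w))\,\hat c\,\hat d\,\kappa(r(\mathbf w))$ (with the obvious definition when $|\con(\mathbf w)|\le1$), checking from the $\sim\subseteq\equiv$ direction that $\kappa$ is well defined and depends only on the triple $(\con(\mathbf w),[\ell(\mathbf w)]_\sim,[r(\mathbf w)]_\sim)$, and proving $\mathbf w\sim\kappa(\mathbf w)$ by the same sandwich-plus-deletion argument.
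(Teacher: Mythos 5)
The paper offers no proof of this statement: it is the classical Green--Rees description of the identities of bands, cited directly from \cite{Green-Rees-52} (together with the remark that $\Id(\mathbb B)=\Id(\mathbb B^1)$). Your proposal is therefore not an alternative to anything actually carried out in the paper but a reconstruction of the classical argument, and as an outline it is correct: the direction $\sim\subseteq\equiv$ via checking a single square-insertion move $\mathbf a\mathbf p\mathbf b\to\mathbf a\mathbf p\mathbf p\mathbf b$ is exactly right (first occurrences, hence the critical letter $\hat c$, are unaffected by duplicating a factor), and the sandwich-plus-absorption derivation of the converse is the standard one. Three remarks. First, the absorption identity $\mathbf a\mathbf z\mathbf b\approx\mathbf a\mathbf b$ for $\con(\mathbf z)\subseteq\con(\mathbf a)=\con(\mathbf b)$, which you rightly call the technical core and defer, is precisely the paper's Corollary~\ref{C: uwu} --- and the paper derives that corollary \emph{from} Fact~\ref{F: x=xx} (or from Clifford--Preston's theorem that a band is a semilattice of rectangular bands), so you must indeed prove it independently, as you propose; note that your reduction ``to manipulations around a single letter'' still leaves the step $\mathbf p y\mathbf p\approx\mathbf p$ for $y\in\con(\mathbf p)$, which is not a one-line computation and is where essentially all of the Green--Rees work lives. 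Second, your induction on $|\con(\mathbf u)|$ in the converse direction is unnecessary: the hypothesis supplies $\ell(\mathbf u)\approx\ell(\mathbf v)$ and $r(\mathbf u)\approx r(\mathbf v)$ as identities of $\mathbb B^1$ outright rather than recursively, so the sandwich and absorption steps already give $\mathbf u\sim\ell(\mathbf v)\hat c\hat d\,r(\mathbf v)\sim\mathbf v$ with no recursion (the recursion only matters for the normal-form version you sketch at the end). Third, the ``secondary subtlety'' about the relative positions of $\hat c$ and $\hat d$ does not arise in your own argument: the two square-collapses in $\ell(\mathbf u)\hat c\cdot\mathbf u\cdot\hat d\,r(\mathbf u)$ are performed at the two outer ends and never interact, and the substitution of $\ell(\mathbf v)$ for $\ell(\mathbf u)$ is applied to the outer factor, not inside $\mathbf u$, using only that $\sim$ is a congruence.
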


The next well-known claim readily follows from Fact~\ref{F: x=xx} or from the classical result \cite[Exercise 4.2.1]{Clifford-Preston-61} that an idempotent semigroup is a semilattice of rectangular bands.

\begin{cor}
\label{C: uwu} 
If $\con({\bf w}) \subseteq \con({\bf u})=\con({\bf v})$, then $\mathbb B^{1}$ satisfies ${\bf u}{\bf v} \approx {\bf u w v}$.\qed
\end{cor}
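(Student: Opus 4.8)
The plan is to deduce the identity directly from the Green--Rees description of $\Id(\mathbb B^1)$ recorded in Fact~\ref{F: x=xx}, applied to the pair $\mathbf u\mathbf v$ and $\mathbf u\mathbf w\mathbf v$. The content condition of that fact is immediate: since $\con(\mathbf w)\subseteq\con(\mathbf u)=\con(\mathbf v)$, we have $\con(\mathbf u\mathbf w\mathbf v)=\con(\mathbf u)\cup\con(\mathbf w)\cup\con(\mathbf v)=\con(\mathbf u)=\con(\mathbf u\mathbf v)$. It therefore remains to check that the identities $\ell(\mathbf u\mathbf v)\approx\ell(\mathbf u\mathbf w\mathbf v)$ and $r(\mathbf u\mathbf v)\approx r(\mathbf u\mathbf w\mathbf v)$ hold in $\mathbb B^1$, and I would do this by showing that in each case the two sides are in fact \emph{the same word}, so that no appeal to the (recursive) content of Fact~\ref{F: x=xx} is even required for them.

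The key observation is that $\con(\mathbf u\mathbf v)=\con(\mathbf u\mathbf w\mathbf v)=\con(\mathbf u)$, and every letter of $\con(\mathbf u)$ already occurs inside the initial factor $\mathbf u$. Hence, reading either of $\mathbf u\mathbf v$ or $\mathbf u\mathbf w\mathbf v$ from the left, the first occurrences of the letters all lie within that initial $\mathbf u$ and appear in exactly the order and at exactly the positions in which they appear when reading $\mathbf u$ alone; in particular the last letter of $\con(\mathbf u)$ to make its first appearance, and the position of that first appearance, are the same for $\mathbf u$, for $\mathbf u\mathbf v$, and for $\mathbf u\mathbf w\mathbf v$. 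By the definition of $\ell$ this gives $\ell(\mathbf u\mathbf v)=\ell(\mathbf u)=\ell(\mathbf u\mathbf w\mathbf v)$ as words. Dually, every letter of $\con(\mathbf v)=\con(\mathbf u\mathbf v)=\con(\mathbf u\mathbf w\mathbf v)$ has its last occurrence inside the terminal factor $\mathbf v$, so reading from the right one obtains $r(\mathbf u\mathbf v)=r(\mathbf v)=r(\mathbf u\mathbf w\mathbf v)$. (When $|\con(\mathbf u)|\le 1$ these identities are degenerate, all of $\ell(\mathbf u\mathbf v)$, $\ell(\mathbf u\mathbf w\mathbf v)$, $r(\mathbf u\mathbf v)$, $r(\mathbf u\mathbf w\mathbf v)$ being the empty word, and the argument is unaffected; the same holds trivially if $\mathbf w$ is empty.)

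Since $\ell(\mathbf u\mathbf v)$ and $\ell(\mathbf u\mathbf w\mathbf v)$ coincide letter for letter, and likewise $r(\mathbf u\mathbf v)$ and $r(\mathbf u\mathbf w\mathbf v)$, the identities $\ell(\mathbf u\mathbf v)\approx\ell(\mathbf u\mathbf w\mathbf v)$ and $r(\mathbf u\mathbf v)\approx r(\mathbf u\mathbf w\mathbf v)$ hold in every variety, in particular in $\mathbb B^1$; together with the content condition, Fact~\ref{F: x=xx} then gives that $\mathbb B^1$ satisfies $\mathbf u\mathbf v\approx\mathbf u\mathbf w\mathbf v$. I do not anticipate any genuine obstacle here, as this is essentially a bookkeeping consequence of the Green--Rees theorem; the only point requiring a word of care is the degenerate case noted above. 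As the remark preceding the statement already indicates, an alternative argument is available via the classical fact that an idempotent semigroup is a semilattice of rectangular bands: under any substitution into a member of $\mathbb B^1$, both $\mathbf u\mathbf v$ and $\mathbf u\mathbf w\mathbf v$ are sent into the rectangular-band component indexed by the common content $\con(\mathbf u)$, where the value of a product depends only on its outer factors. I would, however, present the computation above via Fact~\ref{F: x=xx}, which is the most economical.
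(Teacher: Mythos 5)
Your proof is correct and follows exactly the route the paper indicates (the corollary is stated without proof, with the remark that it "readily follows from Fact~\ref{F: x=xx}"): you verify the content condition and observe that $\ell(\mathbf{uv})=\ell(\mathbf u)=\ell(\mathbf{uwv})$ and $r(\mathbf{uv})=r(\mathbf v)=r(\mathbf{uwv})$ literally as words, so Fact~\ref{F: x=xx} applies immediately. No issues.
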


\begin{fact}[\mdseries{\cite[Lemma~3.20]{Fennemore-69}}]
\label{F: R31}
An identity ${\bf u} \approx {\bf v}$ holds in $\mathbb R_3^1$ if and only if ${\bf u} \approx {\bf v}$ holds in $\mathbb L^1_2$ and $r({\bf u}) \approx r({\bf v})$ holds in $\mathbb R^1_3$.\qed
\end{fact}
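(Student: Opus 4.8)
The plan is to obtain this as the $\mathbb R_3^1$-analogue of the Green--Rees description of $\Id(\mathbb B^1)$ (Fact~\ref{F: x=xx}): for an idempotent-monoid variety $\mathbb V\subseteq\mathbb B^1$ a provable identity should be governed by content together with the behaviour of the two ``end reducts'' $\mathbf w\mapsto\ell(\mathbf w)$ and $\mathbf w\mapsto r(\mathbf w)$ in varieties read off from the position of $\mathbb V$ in Fig.~\ref{L(B)}; for $\mathbb R_3^1$ these turn out to be $\mathbb L_2^1$ on the initial side and $\mathbb R_3^1$ itself on the final side. The structural input is $\mathbb L_2^1\subseteq\mathbb L_2^1\vee\mathbb R_2^1=\mathbb L_3^1\wedge\mathbb R_3^1\subseteq\mathbb R_3^1$ from Corollary~\ref{C: struct of L(B)}, together with Corollary~\ref{C: uwu}.

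\emph{Necessity.} Suppose $\mathbb R_3^1\models\mathbf u\approx\mathbf v$. Since $\mathbb L_2^1\subseteq\mathbb R_3^1$, the identity holds in $\mathbb L_2^1$, that is $\ini(\mathbf u)=\ini(\mathbf v)$ by Fact~\ref{F: L2}(i). To get $\mathbb R_3^1\models r(\mathbf u)\approx r(\mathbf v)$ I would show that $\mathbf w\mapsto r(\mathbf w)$ is compatible with $\mathbb R_3^1$, i.e.\ $\mathbb R_3^1\models\mathbf p\approx\mathbf q$ implies $\mathbb R_3^1\models r(\mathbf p)\approx r(\mathbf q)$; by transitivity it suffices to treat one elementary step, so take $\mathbf q$ obtained from $\mathbf p$ by a single substitution into $x\approx x^2$ or into $\mathbf R_3\approx\mathbf S_3$ inside a context. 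Both axioms merely repeat or append letters already occurring in the word, so they preserve its content and its last letter, and one can track how the cut defining $r$ moves; generically $r(\mathbf p)$ and $r(\mathbf q)$ differ by the same axiom applied in a sub-context, and the boundary case, where the rewritten block straddles the cut, is handled by isolating the overhang and reabsorbing it via Corollary~\ref{C: uwu}.

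\emph{Sufficiency.} Conversely assume $\ini(\mathbf u)=\ini(\mathbf v)$ (hence $\con(\mathbf u)=\con(\mathbf v)$) and $\mathbb R_3^1\models r(\mathbf u)\approx r(\mathbf v)$; I want $\mathbb R_3^1\models\mathbf u\approx\mathbf v$, by induction on $|\con(\mathbf u)|$. In the base cases $|\con(\mathbf u)|\le 2$ one checks that $\mathbf R_3\approx\mathbf S_3$ has no two-letter consequences beyond those of $x\approx x^2$ (any collapse of the four content-$\{a,b\}$ classes would put $\mathbb R_3^1$ inside $\mathbb M(\{1\})$, $\mathbb L_2^1$ or $\mathbb R_2^1$, contrary to Fig.~\ref{L(B)}), so $\mathbb R_3^1$ and $\mathbb B^1$ agree on two letters and the claim is Fact~\ref{F: x=xx}. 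For the step, $\mathbb R_3^1\models r(\mathbf u)\approx r(\mathbf v)$ forces $\con(r(\mathbf u))=\con(r(\mathbf v))$, so $\mathbf u$ and $\mathbf v$ have the same first-finishing letter $b$; write $\mathbf u=\mathbf u_1\,b\,r(\mathbf u)$ and $\mathbf v=\mathbf v_1\,b\,r(\mathbf v)$ with $b\notin r(\mathbf u),r(\mathbf v)$. The heart of the proof is a normal-form lemma rewriting $\mathbf u$, modulo $\mathbb R_3^1$, into a canonical word determined by $\ini(\mathbf u)$ and $r(\mathbf u)$ alone: the part before the last $b$ is reshuffled into a prescribed order by inserting repetitions via $x\approx x^2$ and then applying $\mathbf R_3\approx\mathbf S_3$ and Corollary~\ref{C: uwu}, in the spirit of the $\mathbb B^1$ Green--Rees normalisation but with the weaker axiom. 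Having normalised the head, the inductive hypothesis lets one replace the suffix $r(\mathbf u)$ by $r(\mathbf v)$ (legitimate since $r(\mathbf u)$ occurs literally after the last $b$ and $b\notin r(\mathbf u)$), and running the normalisation backwards from $\mathbf v$, using $\ini(\mathbf u)=\ini(\mathbf v)$ to match the canonical heads, yields $\mathbb R_3^1\models\mathbf u\approx\mathbf v$.

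\emph{Main obstacle.} I expect sufficiency, and within it the normal-form lemma, to be the real work: since $\mathbf R_3\approx\mathbf S_3$ is much weaker than unrestricted idempotent rearrangement, one must certify that only the moves it licenses (together with their consequences via Corollary~\ref{C: uwu} and via Fact~\ref{F: x=xx} on short alphabets) are used, and the induction must be arranged so that substituting the final reduct never undoes the normalisation of the head. A secondary delicate point is the straddling case in the necessity argument, where one must verify that the overhang can be reabsorbed without disturbing the cut that defines $r$.
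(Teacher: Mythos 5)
The paper does not prove this statement at all: it is quoted from Fennemore's thesis (Lemma~3.20) and closed with a \qed, so there is no internal argument to compare yours against, and I can only judge the proposal on its own terms. Your architecture --- necessity by showing that $\mathbf w\mapsto r(\mathbf w)$ is compatible with one-step derivations from the axioms of $\mathbb R_3^1$, sufficiency by induction on $|\con(\mathbf u)|$ through a normal form governed by $\ini(\mathbf u)$ and $r(\mathbf u)$ --- is the right shape and matches how such inductive solutions of the word problem are obtained in the band literature; the base case and the reduction of sufficiency to a normal-form lemma are fine.

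However, the proposal does not actually establish either of the two points you yourself flag as the real work, and both are genuinely nontrivial. For sufficiency, the normal-form lemma (that $\mathbf u$ can be rewritten, using only $x\approx x^2$ and the second axiom, into a word determined by $\ini(\mathbf u)$ and $r(\mathbf u)$) is essentially equivalent to the statement being proved --- note that the derivability of $\mathbf u\approx\ini(\mathbf u)\,b\,r(\mathbf u)$ already contains Corollary~\ref{C: adjacent-L} and the whole sufficiency direction --- so saying it goes ``in the spirit of the Green--Rees normalisation but with the weaker axiom'' defers exactly the step where one must exhibit which rearrangements the weaker axiom licenses. For necessity, the straddling case cannot be reabsorbed via Corollary~\ref{C: uwu}: one must check, for the specific axiom, that the suffix of the long side after the last occurrence of each letter is $\mathbb R_3^1$-equivalent to the corresponding suffix of the short side. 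This check is precisely where the orientation of the axiom matters, and with the paper's displayed $\mathbf S_3=x_1x_2x_3x_1x_3x_2x_3$ it in fact fails: the suffix of $\mathbf R_3=x_1x_2x_3$ after its only $x_1$ is $x_2x_3$, while the suffix of $\mathbf S_3$ after its last $x_1$ is $x_3x_2x_3$, and $x_2x_3\approx x_3x_2x_3$ is false already in $\mathbb L_2^1\subseteq\mathbb R_3^1$ by Fact~\ref{F: L2}(i). (Equivalently, $\mathbf R_3\approx\mathbf S_3$ violates the conclusions of Facts~\ref{F: R31} and~\ref{F: R3}, whereas the mirrored identity $\overline{\mathbf R_3}\approx\overline{\mathbf S_3}$ satisfies both and has matching tails after last occurrences; so the displayed defining identities of $\mathbb R_n^1$ and $\mathbb L_n^1$ appear to be interchanged.) Your argument must carry out this verification explicitly rather than assume the overhang reabsorbs; as written, the necessity step would break on the paper's own axiom.
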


\begin{cor} 
\label{C: adjacent-L}
For any $\mathbf w\in\mathfrak A^\ast$, the variety $\mathbb R_3^1$ satisfies the identity $\mathbf w\approx\ini(\mathbf w)\mathbf w$.
\end{cor}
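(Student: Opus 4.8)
The plan is to invoke Fact~\ref{F: R31}, which reduces verifying an identity $\mathbf u\approx\mathbf v$ in $\mathbb R_3^1$ to verifying $\mathbf u\approx\mathbf v$ in $\mathbb L_2^1$ together with $r(\mathbf u)\approx r(\mathbf v)$ in $\mathbb R_3^1$. Set $\mathbf v:=\ini(\mathbf w)\mathbf w$, and write $\ini(\mathbf w)=y_1y_2\cdots y_n$, where $y_1,\dots,y_n$ are the letters of $\mathbf w$ listed in the order in which they first occur. Then $\ini(\mathbf w)$ is a linear word with $\con(\ini(\mathbf w))=\con(\mathbf w)$, so $\con(\mathbf v)=\con(\mathbf w)$.

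First I would check that $\mathbb L_2^1$ satisfies $\mathbf w\approx\mathbf v$: reading $\mathbf v=y_1\cdots y_n\mathbf w$ from the left, the first occurrences of the letters $y_1,\dots,y_n$ all lie inside the initial segment $\ini(\mathbf w)$, so $\ini(\mathbf v)=y_1\cdots y_n=\ini(\mathbf w)$, and Fact~\ref{F: L2}(i) applies. Next I would determine $r(\mathbf v)$. Every suffix of $\mathbf v=y_1\cdots y_n\mathbf w$ that is at least as long as $\mathbf w$ either equals $\mathbf w$ or has the form $y_iy_{i+1}\cdots y_n\mathbf w$, so in either case its set of letters is the whole of $\con(\mathbf w)=\con(\mathbf v)$. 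Hence the maximal suffix of $\mathbf v$ that fails to contain all of $\con(\mathbf v)$ is a suffix of $\mathbf w$, and it coincides with the maximal suffix of $\mathbf w$ that fails to contain all of $\con(\mathbf w)$; that is, $r(\mathbf v)=r(\mathbf w)$. (When $|\con(\mathbf w)|\le1$ both sides are the empty word.) Therefore $r(\mathbf w)\approx r(\mathbf v)$ is the reflexive identity $r(\mathbf w)\approx r(\mathbf w)$ and holds in $\mathbb R_3^1$; by Fact~\ref{F: R31}, $\mathbb R_3^1$ satisfies $\mathbf w\approx\ini(\mathbf w)\mathbf w$.

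The argument is essentially bookkeeping, and the only step needing a moment's care is the equality $r(\mathbf v)=r(\mathbf w)$ --- which amounts to the remark that prepending the linear word $\ini(\mathbf w)$, already carrying the full content of $\mathbf w$, produces no new short suffix of incomplete content.
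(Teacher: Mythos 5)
Your proof is correct and follows exactly the paper's route: verify $\ini(\ini(\mathbf w)\mathbf w)=\ini(\mathbf w)$ and $r(\ini(\mathbf w)\mathbf w)=r(\mathbf w)$, then apply Facts~\ref{F: L2}(i) and~\ref{F: R31}. The paper states these two equalities as ``clear''; you simply supply the bookkeeping.
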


\begin{proof} 
Clearly, $\ini(\mathbf w)= \ini(\ini(\mathbf w)\mathbf w)$ and $r(\mathbf w)= r(\ini(\mathbf w)\mathbf w)$.
Now Facts~\ref{F: L2}(i) and~\ref{F: R31} apply.
\end{proof}

We use $_{i{\bf u}}x$ to refer to the $i$th from the left occurrence of $x$ in a word ${\bf u}$.
We use $_{\ell {\bf u}}x$ to refer to the last occurrence of $x$ in ${\bf u}$.  
If $x$ is simple in $\bf u$ then we use $_{{\bf u}}x$ to denote the only occurrence of $x$ in $\bf u$.
If  the $i$th occurrence of $x$ precedes  the $j$th occurrence of $y$ in a word $\bf u$, we write $({_{i{\bf u}}x}) <_{\bf u} ({_{j{\bf u}}y})$.

If $\{z, y\} \subseteq \con({\bf u})$ and $({_{\ell{\bf u}}z}) <_{\bf u} ({_{\ell{\bf u}}y})$, we use ${_{(1z){\bf u}}y}$  to denote the first occurrence of $y$ in $\mathbf u$ after  ${_{\ell{\bf u}}z}$.
We say that a triple of pairwise distinct letters $\{x,y,z \} \subseteq \con({\bf u})$ is {\em $\mathbb R_3^1$-stable} in  ${\bf u} \approx {\bf v}$ if  
\[
(_{\ell{\bf u}}z) <_{\bf u} ({ _{(1z){\bf u}}y}) <_{\bf u}  ({ _{(1z){\bf u}}x}) \Leftrightarrow 
 (_{\ell{\bf v}}z) <_{\bf v} ({ _{(1z){\bf v}}y}) <_{\bf v}  ({ _{(1z){\bf v}}x}). 
 \]
Otherwise, we say that $\{x, y, z\}$ is {\em $\mathbb R_3^1$-unstable} in ${\bf u} \approx {\bf v}$.

\begin{fact} 
\label{F: R3}
An identity ${\bf u} \approx {\bf v}$ holds in $\mathbb R_3^1$ if and only if ${\bf u} \approx {\bf v}$ holds in $\mathbb L_2^1 \vee \mathbb R_2^1$ and each triple of pairwise distinct letters  $\{x, y, z\} \subseteq \con({\bf u})$ is $\mathbb R_3^1$-stable in ${\bf u} \approx {\bf v}$.
\end{fact}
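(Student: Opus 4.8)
The plan is to derive the equivalence from Fact~\ref{F: R31}, which already characterizes $\Id(\mathbb R_3^1)$ in terms of $\Id(\mathbb L_2^1)$ together with the identity $r(\mathbf u)\approx r(\mathbf v)$ in $\mathbb R_3^1$. Since $\mathbb L_2^1\vee\mathbb R_2^1$ sits strictly between $\mathbb L_2^1$ and $\mathbb R_3^1$ (see Fig.~\ref{L(B)}), the forward direction ($\Rightarrow$) is essentially free: if $\mathbf u\approx\mathbf v$ holds in $\mathbb R_3^1$ it holds in the smaller variety $\mathbb L_2^1\vee\mathbb R_2^1$, and $\mathbb R_3^1$-stability of every triple is just a restatement of a family of consequences of $\mathbf u\approx\mathbf v$ in $\mathbb R_3^1$ (substitute all letters outside $\{x,y,z\}$ by $1$ and all other occurrences appropriately, then read off what Fact~\ref{F: R31} forces on the relevant suffix). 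So for ($\Rightarrow$) I would just unwind the definition of $\mathbb R_3^1$-stability and check it is implied.

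For the converse ($\Leftarrow$), suppose $\mathbf u\approx\mathbf v$ holds in $\mathbb L_2^1\vee\mathbb R_2^1$ and every triple of distinct letters of $\con(\mathbf u)$ is $\mathbb R_3^1$-stable. By Fact~\ref{F: R31} it suffices to show $r(\mathbf u)\approx r(\mathbf v)$ holds in $\mathbb R_3^1$. First note $\con(\mathbf u)=\con(\mathbf v)$ (from holding in $\mathbb L_2^1\vee\mathbb R_2^1$, hence in $\mathbb B^1$ restricted appropriately — actually from $\ini(\mathbf u)=\ini(\mathbf v)$ and $\fin(\mathbf u)=\fin(\mathbf v)$ via Fact~\ref{F: L2}), and in fact $\fin(\mathbf u)=\fin(\mathbf v)$, so $r(\mathbf u)$ and $r(\mathbf v)$ omit the same last letter and have the same content. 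Then I would argue by induction on $|\con(\mathbf u)|$, or more directly: since $\mathbb R_3^1$ is idempotent, by Fact~\ref{F: x=xx} the identity $r(\mathbf u)\approx r(\mathbf v)$ holds in $\mathbb R_3^1$ (equivalently in $\mathbb B^1\wedge\mathbb R_3^1=\mathbb R_3^1$) iff it holds in $\mathbb L_2^1$ and the further reduction $r(r(\mathbf u))\approx r(r(\mathbf v))$ holds in $\mathbb R_3^1$. This would lead to an iterated tail-reduction; at each stage the "holds in $\mathbb L_2^1$" condition, i.e. equality of initial parts, must be extracted from the stability hypotheses. The cleaner route is: show directly that the word $r(\mathbf u)$ is obtained from $r(\mathbf v)$ by a sequence of elementary $\mathbb R_3^1$-moves, where the admissible moves are those permitted by the identities $xy\approx yxy$ (from $\mathbb R_2^1$), $xy\approx xyx$ (from $\mathbb L_2^1$), and $xyz\approx xyzxzyz$ (the defining identity of $\mathbb R_3^1$ displayed in Fig.~\ref{L(B)}, up to the usual reformulation $\mathbf R_3 t\overline{\mathbf R_3}\approx\mathbf S_3 t\overline{\mathbf S_3}$); the role of the $\mathbb R_3^1$-stability condition is precisely to certify that no "order obstruction" among the last occurrences blocks this rewriting.

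The main obstacle I expect is the bookkeeping in the converse: one must show that the combinatorial conditions "$\mathbf u\approx\mathbf v$ in $\mathbb L_2^1\vee\mathbb R_2^1$" plus "every triple $\mathbb R_3^1$-stable" are jointly \emph{sufficient} to rewrite $\mathbf u$ to $\mathbf v$ inside $\mathbb R_3^1$. The subtlety is that $\mathbb R_3^1$-stability as defined only constrains the configuration of first occurrences of $x$ and $y$ \emph{after} the last occurrence of $z$; one has to verify this local data, ranging over all ordered triples, actually pins down $r(\mathbf u)$ up to $\mathbb R_3^1$-equivalence, and in particular handles letters that are simple in $r(\mathbf u)$ versus multiple correctly. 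I would organize this by first reducing (via Corollary~\ref{C: adjacent-L} and Fact~\ref{F: L2}(i)) to the case where $\mathbf u$ and $\mathbf v$ start with $\ini(\mathbf u)=\ini(\mathbf v)$, then peeling off the common suffix structure one letter at a time, using the stability of the triple consisting of the peeled letter and the two "competing" letters at each step to justify the swap. Once the rewriting is built, Fact~\ref{F: R31} closes the argument.
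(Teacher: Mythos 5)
Your forward direction is fine and matches the paper's: restrict to the three letters, observe that an unstable triple forces $r(\mathbf u(x,y,z))$ and $r(\mathbf v(x,y,z))$ to begin with different letters, and invoke Facts~\ref{F: L2}(i) and~\ref{F: R31}. The converse, however, is not actually carried out, and the step you yourself flag as ``the main obstacle'' is precisely the one that needs an idea. The paper resolves it with a short induction on $n=|{\fin(\mathbf u)}|$: writing $\fin(\mathbf u)=\fin(\mathbf v)=x_nx_{n-1}\cdots x_1$, the word $r(\mathbf u)$ is the suffix beginning right after $_{\ell\mathbf u}x_n$, so for any $y,z\in\con(r(\mathbf u))$ the occurrences $_{(1x_n)\mathbf u}y$ and $_{(1x_n)\mathbf u}z$ are exactly the \emph{first} occurrences of $y$ and $z$ in $r(\mathbf u)$; hence $\mathbb R_3^1$-stability of the triples $\{x_n,y,z\}$ (with $x_n$ in the distinguished slot) is \emph{literally} the statement $\ini(r(\mathbf u))=\ini(r(\mathbf v))$, i.e.\ the $\mathbb L_2^1$-condition required by Fact~\ref{F: R31}. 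Together with $\fin(r(\mathbf u))=\fin(r(\mathbf v))$ (giving the $\mathbb R_2^1$-condition) and the observation that stability of triples inside $\con(r(\mathbf u))$ is inherited by $r(\mathbf u)\approx r(\mathbf v)$, the induction hypothesis applies to $r(\mathbf u)\approx r(\mathbf v)$ and Fact~\ref{F: R31} closes the argument. Your closing paragraph gestures at this (``the stability of the triple consisting of the peeled letter and the two competing letters''), but you never make the identification between that stability condition and $\ini(r(\mathbf u))=\ini(r(\mathbf v))$, which is the whole content of the converse.

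Separately, the route you call ``cleaner'' --- exhibiting an explicit sequence of elementary rewriting moves from $\mathbf u$ to $\mathbf v$ using the identities of $\mathbb L_2^1$, $\mathbb R_2^1$ and $\mathbb R_3^1$ --- is in fact the harder path: it is a syntactic derivability argument that would need its own normal-form or confluence analysis, none of which is supplied, whereas the semantic route through Fact~\ref{F: R31} avoids constructing any derivation at all. As written, the converse is a plan with an acknowledged hole at its key step, so the proposal is incomplete.
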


\begin{proof} 
Suppose that  ${\bf u} \approx {\bf v}$ holds in $\mathbb R_3^1$.
According to Proposition~\ref{P: struct of L(B)}, ${\bf u} \approx {\bf v}$ is satisfied by $\mathbb L_2^1 \vee \mathbb R_2^1$.
Take some triple of pairwise distinct letters  $\{x, y, z\} \subseteq \con({\bf u})$.
Without loss of generality we may assume that $(_{\ell{\bf u}}z) <_{\bf u} ({ _{(1z){\bf u}}y}) <_{\bf u}  ({ _{(1z){\bf u}}x})$.
Since ${\bf u} \approx {\bf v}$ holds in $\mathbb R_2^1$, Fact~\ref{F: L2}(ii) implies that $(_{\ell{\bf v}}z) <_{\bf u} ({ _{\ell{\bf v}}y})$ and $(_{\ell{\bf v}}z) <_{\bf u}  ({ _{\ell{\bf v}}x})$. 
If $(_{\ell{\bf v}}z) <_{\bf v} ({ _{(1z){\bf v}}x}) <_{\bf v}  ({ _{(1z){\bf v}}y})$, then $r({\bf u} (x, y, z))$ begins with $y$, but $r({\bf v} (x, y, z))$ begins with $x$. 
Hence ${\bf u} (x, y, z) \approx {\bf v}(x, y, z)$ fails in $\mathbb R_3^1$ by Facts~\ref{F: L2}(i) and~\ref{F: R31}.
To avoid a contradiction, the triple $\{x, y, z\} \subseteq \con({\bf u})$ must be $\mathbb R_3^1$-stable
in ${\bf u} \approx {\bf v}$.

Conversely, suppose that ${\bf u} \approx {\bf v}$ holds in $\mathbb L_2^1 \vee \mathbb R_2^1$ and
each triple of pairwise distinct letters  $\{x, y, z\} \subseteq \con({\bf u})$ is $\mathbb R_3^1$-stable.
In view of Fact~\ref{F: L2}(ii), $\fin ({\bf u})=\fin ({\bf v}) =  x_nx_{n-1} \cdots x_1$ for some $n \ge 1$. 
We use induction on $n$.
If $n < 3$ then ${\bf u} \approx {\bf v}$ holds in $\mathbb R_3^1$ by Fact~\ref{F: R31}. 
Suppose that $n \ge 3$. 
Clearly, each triple of pairwise distinct letters  $\{x, y, z\} \subseteq \con(r({\bf u}))$ is $\mathbb R_3^1$-stable in $r({\bf u}) \approx r({\bf v})$.
Since $\fin(r({\bf u}))=\fin(r({\bf v}))=x_{n-1} x_{n-2}\cdots x_1$, Fact~\ref{F: L2}(ii) implies that $r({\bf u}) \approx r({\bf v})$ holds in $\mathbb R_2^1$.
Further, for each distinct $y,z\in\con(r({\bf u}))$, the triple $\{x_n, y, z\}$ is $\mathbb R_3^1$-stable in ${\bf u} \approx {\bf v}$.
Hence $\ini(r({\bf u}) )=\ini(r({\bf v}))$.
Then $r({\bf u}) \approx r({\bf v})$ holds in $\mathbb L_2^1$ by Fact~\ref{F: L2}(i).
By the induction assumption, $r({\bf u}) \approx r({\bf v})$ holds in $\mathbb R_3^1$.
Now Fact~\ref{F: R31} applies, yielding that ${\bf u} \approx {\bf v}$ is satisfied by $\mathbb R_3^1$.
\end{proof}

\begin{ex} 
\label{E: struct of L(B)}
\quad
\begin{itemize}
\item[\textup{(i)}] $\mathbb R_2^1 \vee \mathbb L_2^1=\var\{x \approx x^2, \, xszx \approx xsxzx\} = \mathbb R_3^1 \wedge \mathbb L_3^1$;
\item[\textup{(ii)}] $\mathbb R_3^1=\var\{x \approx x^2, \, xyt xy \approx xyx t xy\}$.
\end{itemize}
\end{ex}

\begin{proof} 
(i) The identity  $xszx \approx xsxzx$ holds in $\mathbb R_2^1\vee\mathbb L_2^1$ by Fact~\ref{F: L2}.
On the other hand, it fails in $\mathbb R_3^1$ and in $\mathbb L_3^1$ by Fact~\ref{F: R3} and its dual.
Therefore, $\mathbb R_2^1 \vee \mathbb L_2^1=\var\{x \approx x^2, \, xszx \approx xsxzx\} = \mathbb R_3^1 \wedge \mathbb L_3^1$ by
Corollary~\ref{C: struct of L(B)}.

\smallskip

(ii) The identity  $xyt xy \approx xyx t xy$ holds in $\mathbb R_3^1$ but fails in $\mathbb L_3^1$ by Fact~\ref{F: R3} and its dual.
Therefore, $\mathbb R_3^1=\var\{x \approx x^2, \, xyt xy \approx xyx t xy\}$ by Corollary~\ref{C: struct of L(B)}.
\end{proof}

\section{The lattice of subvarieties of $\mathbb Q^1 \vee \mathbb B^1$} 
\label{sec: QB}

\begin{lemma} 
\label{L: xyxy}
\quad
\begin{itemize}
\item[\textup{(i)}] $\mathbb Q^1 \vee \mathbb B^1 = \var \{xtx \approx xtx^2 \approx x^2tx, (xy)^2 \approx x^2y^2\}$;
\item[\textup{(ii)}] An identity ${\bf u} \approx {\bf v}$  holds in   $\mathbb Q^1 \vee \mathbb B^1$   if and only if ${\bf u} \approx {\bf v}$ holds in $\mathbb Q^1$ and the corresponding blocks of $\bf u$ and $\bf v$ are equivalent modulo $x \approx x^2$.
\end{itemize}
\end{lemma}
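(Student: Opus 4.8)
The plan is to establish part~(ii) first and then deduce part~(i). Throughout write $\Sigma$ for the identity system $\{xtx\approx xtx^2\approx x^2tx,\ (xy)^2\approx x^2y^2\}$. The inclusion $\mathbb Q^1\vee\mathbb B^1\subseteq\var\Sigma$ follows by checking that $Q^1$ and $B^1$ satisfy $\Sigma$: each identity of $\Sigma$ is of the form covered by Fact~\ref{F: Q}, with matching simple letters and matching block-contents, hence holds in $\mathbb Q^1$; and for $B^1$ the identities of $\Sigma$ are consequences of $x\approx x^2$ (for instance $(xy)^2\approx xy\approx x^2y^2$, using $x\approx x^2$ with $x\mapsto xy$ and then $x^2\approx x$, $y^2\approx y$). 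For the "if" direction of~(ii): if ${\bf u}\approx{\bf v}$ holds in $\mathbb Q^1$, Fact~\ref{F: Q} gives ${\bf u}=\mathbf a_0\prod_{i=1}^m(t_i\mathbf a_i)$ and ${\bf v}=\mathbf b_0\prod_{i=1}^m(t_i\mathbf b_i)$ with $\con(\mathbf a_i)=\con(\mathbf b_i)$; if moreover each $\mathbf a_i\approx\mathbf b_i$ holds in $\mathbb B^1$, then $\mathbf p\mathbf a_i\mathbf q\approx\mathbf p\mathbf b_i\mathbf q$ holds in $\mathbb B^1$ for all words $\mathbf p,\mathbf q$, so rewriting the blocks of ${\bf u}$ one at a time gives ${\bf u}\approx{\bf v}$ in $\mathbb B^1$, hence in $\mathbb Q^1\vee\mathbb B^1$.

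For the "only if" direction of~(ii) I would argue as follows. Suppose ${\bf u}\approx{\bf v}$ holds in $\mathbb Q^1\vee\mathbb B^1$, hence in $\mathbb Q^1$ — giving the block decomposition with $\con(\mathbf a_i)=\con(\mathbf b_i)$ — and in $\mathbb B^1$; fix $i$. Apply the substitution $\sigma$ that fixes every letter of $\con(\mathbf a_i)$ and the simple letters $t_i,t_{i+1}$, and erases every other letter. Then ${\bf u}':=\sigma({\bf u})=\mathbf c_0\,t_i\,\mathbf a_i\,t_{i+1}\,\mathbf c_2$ and ${\bf v}':=\sigma({\bf v})=\mathbf c_0'\,t_i\,\mathbf b_i\,t_{i+1}\,\mathbf c_2'$ with $\con(\mathbf c_0),\con(\mathbf c_2)\subseteq\con(\mathbf a_i)$, and $t_i,t_{i+1}$ simple in ${\bf u}'$ (omit $t_i$ when $i=0$ and $t_{i+1}$ when $i=m$; when $m=0$ there is nothing to prove). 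Since every letter of $\mathbf c_0$ recurs in $\mathbf a_i$, the letter $t_i$ has the earliest last occurrence in ${\bf u}'$, so $r({\bf u}')=\mathbf a_i\,t_{i+1}\,\mathbf c_2$; since every letter of $\mathbf c_2$ already occurred in $\mathbf a_i$, the letter $t_{i+1}$ has the latest first occurrence in $\mathbf a_i\,t_{i+1}\,\mathbf c_2$, so $\ell(r({\bf u}'))=\mathbf a_i$, and likewise $\ell(r({\bf v}'))=\mathbf b_i$. As ${\bf u}'\approx{\bf v}'$ is a substitution instance of ${\bf u}\approx{\bf v}$ it holds in $\mathbb B^1$, so applying Fact~\ref{F: x=xx} twice yields $\mathbf a_i\approx\mathbf b_i$ in $\mathbb B^1$ (for $i=0$ one step, $\ell({\bf u}')=\mathbf a_0$, suffices, and dually for $i=m$). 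This establishes~(ii), and I do not expect obstacles here.

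For part~(i) it remains to show $\var\Sigma\subseteq\mathbb Q^1\vee\mathbb B^1$, i.e.\ that every identity ${\bf u}\approx{\bf v}$ of $\mathbb Q^1\vee\mathbb B^1$ follows from $\Sigma$. By~(ii) it holds in $\mathbb Q^1$ — giving the block decomposition — and each $\mathbf a_i\approx\mathbf b_i$ holds in $\mathbb B^1$. From $\Sigma$ one first derives the rule $\gamma x\delta\approx\gamma x^2\delta$, valid whenever $x$ occurs in $\gamma\delta$ (write $\delta=\delta_1x\delta_2$ with $x\notin\con(\delta_1)$ and apply $xtx\approx x^2tx$ to the factor $x\delta_1x$, or symmetrically $xtx\approx xtx^2$ if $x$ occurs only in $\gamma$), together with its reverse, and also $\mathbf s^2\approx s_1^2\cdots s_k^2$ for $\mathbf s=s_1\cdots s_k$ by iterating $(xy)^2\approx x^2y^2$. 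These rules allow the multiplicity of any currently-multiple letter to be changed freely (to any positive value); with them, any chain of idempotent rewrites $\mathbf s\leftrightarrow\mathbf s^2$ turning $\mathbf a_i$ into $\mathbf b_i$ inside $\mathbb B^1$ lifts, in the context provided by ${\bf u}$, to a $\Sigma$-derivation — duplications lift directly, and one pads letters before any de-duplication so that no letter is ever removed from the word (possible since every letter of $\con(\mathbf a_i)$ is multiple in ${\bf u}$: it occurs in another block or, being a block letter confined to $\mathbf a_i$, occurs there at least twice — and likewise for $\mathbf b_i$). Rewriting the blocks of ${\bf u}$ in turn gives $\Sigma\vdash{\bf u}\approx{\bf v}$. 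Carrying out this lifting — keeping track of which letters must remain multiple — is the main remaining technical point, though the rules above make it routine.
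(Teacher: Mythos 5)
Your proposal is correct and follows essentially the same route as the paper: combine Fact~\ref{F: Q} with Fact~\ref{F: x=xx} to characterize the identities of $\mathbb Q^1\vee\mathbb B^1$ semantically, and then derive them syntactically from $\Sigma$ by rewriting inside the blocks, where $xtx\approx xtx^2\approx x^2tx$ is applicable because every block letter is multiple in the ambient word. The one step you flag as still needing care --- lifting a chain of $x\approx x^2$ rewrites to a $\Sigma$-derivation --- is asserted without detail in the paper as well; your padding argument does close it, and it can be streamlined by first replacing each block by its image under $x\mapsto x^2$ (legal since all block letters are multiple) and noting that $\mathbf a_i^{(2)}\approx\mathbf b_i^{(2)}$ is a $\Sigma$-consequence valid in any context.
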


\begin{proof} 
(i)  Evidently, $x\approx x^2$ implies $xtx \approx xtx^2\approx x^2tx$ and $(xy)^2 \approx x^2y^2$.
It follows from Fact~\ref{F: Q} that $\mathbb Q^1$ satisfies $xtx \approx xtx^2\approx x^2tx$ and $(xy)^2 \approx x^2y^2$ as well.
Therefore, $\mathbb Q^1 \vee \mathbb B^1 \subseteq \var \{xtx \approx xtx^2\approx x^2tx, (xy)^2 \approx x^2y^2\}$. 

Conversely, if ${\bf u} \approx {\bf v}$ is an identity of $\mathbb Q^1$, then,
by Fact~\ref{F: Q}, ${\bf u}= \mathbf a_0 \prod_{i=1}^m (t_i\mathbf a_i)$ and  ${\bf v} = \mathbf b_0 \prod_{i=1}^m (t_i\mathbf b_i)$ for some $m \ge 0$, $ \simp({\bf u})= \simp({\bf v})=\{t_1,t_2,\dots, t_m\}$, and $\con({\bf a}_i) = \con({\bf b}_i)$ for each $0 \le i \le m$. 
If, in addition, ${\bf u} \approx {\bf v}$ holds in $\mathbb B^1$, then according to Fact~\ref{F: x=xx}, the corresponding blocks $\mathbf a_i$ and $\mathbf b_i$ of $\bf u$ and $\bf v$, respectively, must be equivalent modulo $x \approx x^2$.
Then  ${\bf u} \approx {\bf v}$ can be derived from $\{xtx \approx xtx^2\approx x^2tx, (xy)^2 \approx x^2y^2\}$.
Therefore, $\mathbb Q^1 \vee \mathbb B^1 = \var \{xtx \approx xtx^2\approx x^2tx, (xy)^2 \approx x^2y^2\}$. 

Part (ii) readily follows from Part (i) and its proof. 
\end{proof} 

\begin{obs} 
\label{O: B} 
Let $\mathbb V$ be a subvariety of $\mathbb Q^1 \vee \mathbb B^{1}$ and ${\bf u} \approx {\bf v}$ an identity of $\mathbb V \wedge \mathbb B^{1}$ such that $\con({\bf u}) = \mul ({\bf u})$. Then ${\bf u} \approx {\bf v}$ holds on $\mathbb V$.
\end{obs}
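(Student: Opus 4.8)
The plan is to verify $\mathbf u\approx\mathbf v$ directly: take an arbitrary monoid $M\in\mathbb V$ and an arbitrary substitution $\varphi\colon\mathfrak A^\ast\to M$, and produce an idempotent submonoid $N$ of $M$ together with a substitution into $N$ carrying $\mathbf u,\mathbf v$ to $\varphi(\mathbf u),\varphi(\mathbf v)$; since $N\in\mathbb V\wedge\mathbb B^1$ satisfies $\mathbf u\approx\mathbf v$, this forces $\varphi(\mathbf u)=\varphi(\mathbf v)$. If $\mathbb V$ is trivial there is nothing to prove, so assume otherwise. Since $\mathbb Q^1\vee\mathbb B^1\models x^2\approx x^3$, any nontrivial monoid in $\mathbb V$ has a nonidentity idempotent, so $\mathbb V$, and hence $\mathbb V\wedge\mathbb B^1$, contains the two-element semilattice monoid; consequently $\mathbb V\wedge\mathbb B^1\models\mathbf u\approx\mathbf v$ yields $\con(\mathbf u)=\con(\mathbf v)$. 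Put $C:=\con(\mathbf u)=\mul(\mathbf u)$; then $\mathbf u$, and with it the corresponding word $\mathbf v$, is a single block with content $C$.

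The key preliminary observation is that whenever $\con(\mathbf w)=\mul(\mathbf w)$, the variety $\mathbb V$ satisfies both $\mathbf w\approx\mathbf w^2$ and $\mathbf w\approx\widehat{\mathbf w}$, where $\widehat{\mathbf w}$ denotes the word obtained from $\mathbf w$ by replacing each letter $x$ by $x^2$. Indeed, each of $\mathbf w$, $\mathbf w^2$, $\widehat{\mathbf w}$ has no simple letters and all of them have content $\con(\mathbf w)$, so these identities hold in $\mathbb Q^1$ by Fact~\ref{F: Q}; they obviously hold in $\mathbb B^1$; hence they hold in $\mathbb Q^1\vee\mathbb B^1$, and therefore in $\mathbb V$.

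Now let $N$ be the submonoid of $M$ generated by $\{\varphi(x)^2:x\in C\}$. Every element of $N$ other than $1_M$ equals $\varphi(\mathbf w)$ for some nonempty product $\mathbf w$ of squares of letters from $C$; such a $\mathbf w$ satisfies $\con(\mathbf w)=\mul(\mathbf w)$, so $\varphi(\mathbf w)^2=\varphi(\mathbf w^2)=\varphi(\mathbf w)$ by the previous paragraph. Hence $N$ is idempotent, so $N\in\mathbb V\wedge\mathbb B^1$ and $N$ satisfies $\mathbf u\approx\mathbf v$. Finally, define $\psi\colon\mathfrak A^\ast\to N$ by $\psi(x)=\varphi(x)^2$ for $x\in C$ and $\psi(x)=1_M$ for $x\notin C$. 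For every word $\mathbf z$ with $\con(\mathbf z)\subseteq C$ one has $\psi(\mathbf z)=\varphi(\widehat{\mathbf z})$, so using $\mathbb V\models\mathbf u\approx\widehat{\mathbf u}$ and $\mathbb V\models\mathbf v\approx\widehat{\mathbf v}$ we obtain $\psi(\mathbf u)=\varphi(\mathbf u)$ and $\psi(\mathbf v)=\varphi(\mathbf v)$. As $N$ satisfies $\mathbf u\approx\mathbf v$, it follows that $\varphi(\mathbf u)=\varphi(\mathbf v)$; since $M$ and $\varphi$ were arbitrary, $\mathbb V\models\mathbf u\approx\mathbf v$.

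I expect the only real point to be the idempotency of $N$, that is, the fact that squaring (or letterwise squaring) a block word does not alter its value in any monoid from $\mathbb Q^1\vee\mathbb B^1$; the rest is routine verification using Fact~\ref{F: Q} and the hypothesis $\mathbb V\subseteq\mathbb Q^1\vee\mathbb B^1$.
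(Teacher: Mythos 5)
Your proof is correct and takes essentially the same route as the paper: both arguments rest on the facts that squares are idempotent, that the relevant idempotents form a band submonoid lying in $\mathbb V\wedge\mathbb B^1$, and that letterwise squaring does not change the value in $\mathbb V$ of a word all of whose letters are multiple. The one point you assert without justification --- that $\mathbf v$, like $\mathbf u$, has no simple letters, which is what makes $\mathbb V\models\mathbf v\approx\widehat{\mathbf v}$ legitimate --- is assumed tacitly in the paper's own proof as well, and it does hold for every identity to which the observation is subsequently applied.
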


\begin{proof}  
Take some monoid $M \in \mathbb V$.
According to Lemma~\ref{L: xyxy},  $\{xtx \approx xtx^2\approx x^2tx, (xy)^2 \approx x^2y^2\}$ holds in $M$.
Since $M$ satisfies $x^2y^2 \approx (xy)^2$, the set of all idempotents $E(M)$ of $M$ forms a submonoid of $M$. 
Further, since $M$ satisfies $x^2 \approx x^3$, the square of every element in $M$ is an idempotent. 
Finally, since $E(M)$ satisfies ${\bf u} \approx {\bf v}$, the monoid $M$ satisfies the identity obtained from ${\bf u} \approx {\bf v}$ by replacing every letter $x \in \con({\bf u})$ by $x^2$.
Since $\con({\bf u}) = \mul ({\bf u})$, the resulting identity is equivalent to ${\bf u} \approx {\bf v}$ modulo $xtx \approx xtx^2 \approx x^2tx$.
\end{proof}

Let $k \ge 2$ and ${\bf u} \approx {\bf v}$ be any identity which holds in $\mathbb R_k^1$ and fails in $\mathbb L_k^1$ with $\con({\bf u}) = \mul({\bf u})$.
Denote
\[
\mathbb E^1_k = \var  \{xtx \approx xtx^2 \approx x^2tx, x^2y^2 \approx (xy)^2,  {\bf u} \approx {\bf v}\}.\]
For example,
\begin{align*}
&\overline{\mathbb E^1} = \mathbb E^1_2 =  \var \{xtx \approx xtx^2 \approx x^2tx,\, x^2y^2 \approx (xy)^2,  y^2 x^2 \approx xy^2x  \},\\
&\mathbb E^1 = \overline{\mathbb E^1_2} =  \var \{xtx \approx xtx^2 \approx x^2tx,\, x^2y^2 \approx (xy)^2,  x^2y^2 \approx xy^2x  \}.
\end{align*}
See also Example~\ref{E: EE}(ii) below.

The following statement implies that for each $k \ge 2$ the monoid variety $\mathbb E^1_k$ is well-defined.

\begin{fact}  
\label{F: Ek} 
For each $k \ge 2$ the variety  $\mathbb E^1_k$ is the largest subvariety of  $\mathbb Q^1 \vee \mathbb B^{1}$ which does not contain $\mathbb L_k^1$.
\end{fact}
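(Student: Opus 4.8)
The plan is to prove the two halves of the claim separately: first that $\mathbb E^1_k$ does not contain $\mathbb L_k^1$, and then that any subvariety $\mathbb V$ of $\mathbb Q^1\vee\mathbb B^1$ not containing $\mathbb L_k^1$ is contained in $\mathbb E^1_k$. For the first half, note that $\mathbb E^1_k$ is defined by $\{xtx\approx xtx^2\approx x^2tx,\ x^2y^2\approx(xy)^2,\ {\bf u}\approx{\bf v}\}$, where ${\bf u}\approx{\bf v}$ holds in $\mathbb R_k^1$ and fails in $\mathbb L_k^1$. Since $\mathbb L_k^1$ is an idempotent variety it satisfies $x\approx x^2$, hence the first three identities; but it fails ${\bf u}\approx{\bf v}$ by choice, so $\mathbb L_k^1\not\subseteq\mathbb E^1_k$. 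This is immediate.

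For the second half, the key tool is Observation~\ref{O: B}. Let $\mathbb V\subseteq\mathbb Q^1\vee\mathbb B^1$ with $\mathbb L_k^1\not\subseteq\mathbb V$. Since $\mathbb V\wedge\mathbb B^1$ is a subvariety of $\mathbb B^1$ not containing $\mathbb L_k^1$, the description of the lattice of idempotent varieties in Proposition~\ref{P: struct of L(B)} (see Fig.~\ref{L(B)}) shows that $\mathbb V\wedge\mathbb B^1$ lies below one of the meet-irreducible varieties covered by $\mathbb B^1$ on the ``right'' side, i.e. $\mathbb V\wedge\mathbb B^1\subseteq\mathbb R_j^1\vee(\mathbb L_k^1\wedge\mathbb R_k^1)$ for an appropriate comparison; more simply, since the interval below $\mathbb B^1$ is a chain-like structure, $\mathbb L_k^1\not\subseteq\mathbb V\wedge\mathbb B^1$ forces $\mathbb V\wedge\mathbb B^1\subseteq\mathbb R_k^1$ whenever $\mathbb V\wedge\mathbb B^1$ is not comparable with $\mathbb L_k^1$ — I would extract the precise statement from Fig.~\ref{L(B)}, namely that the only subvarieties of $\mathbb B^1$ not containing $\mathbb L_k^1$ are exactly the subvarieties of $\mathbb R_k^1$ together with those strictly below $\mathbb L_{k-1}^1\vee\mathbb R_{k-1}^1$, all of which are contained in $\mathbb R_k^1$. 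Thus $\mathbb V\wedge\mathbb B^1\subseteq\mathbb R_k^1$, so in particular $\mathbb V\wedge\mathbb B^1$ satisfies ${\bf u}\approx{\bf v}$. Now $\con({\bf u})=\mul({\bf u})$ by the standing hypothesis on ${\bf u}\approx{\bf v}$, so Observation~\ref{O: B} applies and gives that $\mathbb V$ itself satisfies ${\bf u}\approx{\bf v}$. Since $\mathbb V\subseteq\mathbb Q^1\vee\mathbb B^1$ also satisfies $xtx\approx xtx^2\approx x^2tx$ and $x^2y^2\approx(xy)^2$ by Lemma~\ref{L: xyxy}(i), we conclude $\mathbb V\subseteq\mathbb E^1_k$, as required. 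Combining the two halves, $\mathbb E^1_k$ is indeed the largest subvariety of $\mathbb Q^1\vee\mathbb B^1$ omitting $\mathbb L_k^1$.

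The main obstacle I anticipate is the lattice-theoretic bookkeeping in the first paragraph of the second half: one must read off from Fig.~\ref{L(B)} precisely which subvarieties of $\mathbb B^1$ fail to contain $\mathbb L_k^1$ and check that they are all contained in $\mathbb R_k^1$. This is where the asymmetry of the Wismath lattice (the "$\mathbb R$-side" versus the "$\mathbb L$-side" and the diamonds $\mathbb R_j^1\vee\mathbb L_j^1$ in the middle) has to be used carefully: a variety below $\mathbb L_j^1\vee\mathbb R_j^1$ for some $j<k$ is below $\mathbb R_k^1$ because $\mathbb L_j^1\vee\mathbb R_j^1=\mathbb L_{j+1}^1\wedge\mathbb R_{j+1}^1\subseteq\mathbb R_{j+1}^1\subseteq\cdots\subseteq\mathbb R_k^1$, while a variety containing some $\mathbb L_j^1$ with $j\ge k$ would contain $\mathbb L_k^1$. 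Once that case analysis is pinned down, the rest is a direct application of Observation~\ref{O: B} and Lemma~\ref{L: xyxy}.
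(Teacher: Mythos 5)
Your proof is correct and follows essentially the same route as the paper: deduce from Wismath's description of the subvariety lattice of $\mathbb B^{1}$ that $\mathbb V\wedge\mathbb B^{1}\subseteq\mathbb R_k^1$, and then apply Observation~\ref{O: B} to lift the band identity $\mathbf u\approx\mathbf v$ (with $\con(\mathbf u)=\mul(\mathbf u)$) to all of $\mathbb V$. The extra lattice bookkeeping you spell out (every subvariety of $\mathbb B^1$ omitting $\mathbb L_k^1$ sits below $\mathbb R_k^1$) is exactly what the paper invokes implicitly via Proposition~\ref{P: struct of L(B)}.
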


\begin{proof} 
Let $\mathbb V$ be a subvariety of $\mathbb Q^1 \vee \mathbb B^{1}$ which does not contain $\mathbb L_k^1$. 
Then in view of Proposition~\ref{P: struct of L(B)}, every idempotent subvariety of $\mathbb V$ is contained in $\mathbb R_k^1 = \var\{x \approx x^2, \sigma\}$, where the identity $\sigma$ holds in $\mathbb R_k^1$ and fails in $\mathbb L_k^1$.
Then Observation~\ref{O: B} implies that $\mathbb V$ satisfies every identity ${\bf u} \approx {\bf v}$  with $\con({\bf u}) = \mul({\bf u})$ that holds in $\mathbb R_k^1$ and fails in $\mathbb L_k^1$.
Therefore,  $\mathbb V$ is a subvariety of  $\mathbb E^1_k$.
\end{proof}

Given $k \ge 2$ an identity ${\bf u} \approx {\bf v}$ is  called \textit{block-$\mathbb R_k^1$-balanced}  if
$Q^1$ satisfies ${\bf u} \approx {\bf v}$ and the corresponding blocks in  $\bf u$ and $\bf v$ form
an identity which holds in $\mathbb R_k^1$. We say that a property of identities~(P) is \textit{derivation-stable} if an identity ${\bf u} \approx {\bf v}$ satisfies property~(P) whenever ${\bf u} \approx {\bf v}$ follows from $\Sigma$ such that every identity in $\Sigma$ satisfies property~(P). 
It is easy to see that the property of being a  block-$\mathbb R_k^1$-balanced identity is derivation-stable. The next observation generalizes Fact~\ref{F: E} and Lemma~\ref{L: xyxy}(ii).

\begin{obs} 
\label{O: rb}  
Given $k \ge 2$ an identity holds in $\mathbb E^1_k$ if and only if it is block-$\mathbb R_k^1$-balanced.
\end{obs}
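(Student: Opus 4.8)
The plan is to prove Observation~\ref{O: rb} by establishing both implications, using the derivation-stability of the block-$\mathbb R_k^1$-balanced property for one direction and a direct normal-form argument for the other.

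\textbf{The ``only if'' direction.} Suppose ${\bf u}\approx{\bf v}$ holds in $\mathbb E^1_k$. Since $\mathbb E^1_k$ is defined by the identities $xtx\approx xtx^2\approx x^2tx$, $x^2y^2\approx(xy)^2$, and some ${\bf p}\approx{\bf q}$ holding in $\mathbb R_k^1$ and failing in $\mathbb L_k^1$ with $\con({\bf p})=\mul({\bf p})$, it suffices by completeness of equational logic to check that each of these defining identities is block-$\mathbb R_k^1$-balanced, and then invoke the derivation-stability of this property (already noted in the text just before the statement). For the first four identities this is immediate: $Q^1$ satisfies them by Fact~\ref{F: Q}, and the corresponding blocks differ only by the relation $x\approx x^2$, which holds in $\mathbb R_k^1\subseteq\mathbb B^1$. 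For ${\bf p}\approx{\bf q}$: it holds in $Q^1$ since it holds in $\mathbb B^1\supseteq$ the band part, and more carefully one uses that $\con({\bf p})=\mul({\bf p})$ forces ${\bf p}$ and ${\bf q}$ to be single blocks (no simple letters), so the ``corresponding blocks'' of ${\bf p}\approx{\bf q}$ are just ${\bf p}\approx{\bf q}$ itself, which holds in $\mathbb R_k^1$ by hypothesis; and the $Q^1$-part follows because any identity between blocks that holds in $\mathbb B^1$ also holds in $Q^1$ (indeed $\mathbb B^1\subseteq\mathbb Q^1\vee\mathbb B^1$ and on words with all letters multiple, $Q^1$ identities reduce to the block condition of Fact~\ref{F: Q}). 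So all defining identities are block-$\mathbb R_k^1$-balanced, hence so is ${\bf u}\approx{\bf v}$.

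\textbf{The ``if'' direction.} Suppose ${\bf u}\approx{\bf v}$ is block-$\mathbb R_k^1$-balanced: $Q^1$ satisfies ${\bf u}\approx{\bf v}$ and the corresponding blocks form an identity of $\mathbb R_k^1$. We must show $\mathbb E^1_k\models{\bf u}\approx{\bf v}$. By Fact~\ref{F: Q}, write ${\bf u}=\mathbf a_0\prod_{i=1}^m(t_i\mathbf a_i)$, ${\bf v}=\mathbf b_0\prod_{i=1}^m(t_i\mathbf b_i)$ with common simple letters $t_1,\dots,t_m$ and $\con(\mathbf a_i)=\con(\mathbf b_i)$. Since each block identity $\mathbf a_i\approx\mathbf b_i$ holds in $\mathbb R_k^1$ and has $\con(\mathbf a_i)=\mul(\mathbf a_i)$ after we (harmlessly, modulo $xtx\approx xtx^2\approx x^2tx$) replace every block letter by its square, we can apply Observation~\ref{O: B} with $\mathbb V=\mathbb E^1_k$: that observation says $\mathbb E^1_k\wedge\mathbb B^1=\mathbb R_k^1$ satisfies $\mathbf a_i\approx\mathbf b_i$ implies $\mathbb E^1_k$ satisfies the squared version, which is equivalent to $\mathbf a_i\approx\mathbf b_i$ when $\con(\mathbf a_i)=\mul(\mathbf a_i)$. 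Then, exactly as in the proof of Lemma~\ref{L: xyxy}(ii), one assembles these block identities with the common simple-letter skeleton using $xtx\approx xtx^2\approx x^2tx$ to derive ${\bf u}\approx{\bf v}$ inside $\mathbb E^1_k$.

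\textbf{Main obstacle.} The delicate point is matching the hypotheses of Observation~\ref{O: B}: that observation requires an identity ${\bf u}'\approx{\bf v}'$ of $\mathbb V\wedge\mathbb B^1$ with $\con({\bf u}')=\mul({\bf u}')$, and we need to know $\mathbb E^1_k\wedge\mathbb B^1=\mathbb R_k^1$. The inclusion $\mathbb E^1_k\wedge\mathbb B^1\subseteq\mathbb R_k^1$ follows because ${\bf p}\approx{\bf q}$ fails in $\mathbb L_k^1$, so by Proposition~\ref{P: struct of L(B)} the only band subvariety below $\mathbb E^1_k$ not containing $\mathbb L_k^1$ is $\mathbb R_k^1$; the reverse inclusion $\mathbb R_k^1\subseteq\mathbb E^1_k$ is Fact~\ref{F: Ek}. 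The second subtlety is the passage between an identity on multiple-letter words and its ``squared'' version: since every block letter occurs at least twice in each block, repeated application of $x^2tx\approx xtx\approx xtx^2$ lets one delete the extra square at each occurrence, so the squared identity and the original are interderivable in $\mathbb E^1_k$. Once these two bookkeeping points are in place, the rest is the routine block-wise assembly already carried out in Lemma~\ref{L: xyxy}.
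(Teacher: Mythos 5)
Your proof is correct and takes essentially the same route as the paper: the ``only if'' direction via derivation-stability of the block-$\mathbb R_k^1$-balanced property applied to the defining identities, and the ``if'' direction via block-by-block derivation using the band-variety lattice, where your detour through Observation~\ref{O: B} and the squaring trick merely makes explicit what the paper's one-line appeal to Proposition~\ref{P: struct of L(B)} leaves implicit. One wording slip worth fixing: ${\bf p}\approx{\bf q}$ does \emph{not} hold in $\mathbb B^1$ (it fails in $\mathbb L_k^1$); what you actually need, and effectively use, is that it holds in $\mathbb R_k^1$, whence $\con({\bf p})=\con({\bf q})$ and $\simp({\bf p})=\simp({\bf q})=\emptyset$, so $Q^1$ satisfies it by Fact~\ref{F: Q}.
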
 

\begin{proof}  
Fix some identity ${\bf u} \approx {\bf v}$ which holds in $\mathbb R_k^1$ and fails in $\mathbb L_k^1$ with $\con({\bf u}) = \mul({\bf u})$.
Since every identity in $\{xtx \approx xtx^2 \approx x^2tx, x^2y^2 \approx (xy)^2,  {\bf u} \approx {\bf v}\}$ is  block-$\mathbb R_k^1$-balanced and this property of identities is derivation-stable, every identity of $\mathbb E^1_k$ is block-$\mathbb R_k^1$-balanced.

Conversely, in view of Proposition~\ref{P: struct of L(B)}, if  ${\bf U} \approx {\bf V}$ is block-$\mathbb R_k^1$-balanced, then we can derive ${\bf U} \approx {\bf V}$ from  $\{xtx \approx xtx^2 \approx x^2tx, x^2y^2 \approx (xy)^2,  {\bf u} \approx {\bf v}\}$ by applying ${\bf u} \approx {\bf v}$ within the blocks.
\end{proof}

The next proposition shows that the varieties in $\{\mathbb T,\mathbb M(\{1\}), \mathbb E^1_k, \overline{\mathbb E_k^1} \mid k \ge 2\}$ generate a lattice isomorphic to the lattice of  varieties of idempotent monoids. 
Compare it with Corollary~\ref{C: struct of L(B)}.

\begin{prop} 
\label{P: EE}
For each $k \ge 2$ we have
\[
\mathbb E^1_k \vee \overline{\mathbb E^1_k}   = \var  \{xtx \approx xtx^2 \approx x^2tx, x^2y^2 \approx (xy)^2,  {\bf u} \approx {\bf v}\} =  \mathbb E^1_{k+1} \wedge \overline{\mathbb E^1_{k+1}},
\] 
where  ${\bf u} \approx {\bf v}$ holds in  $\mathbb L^1_k \vee \mathbb R^1_k$, fails in $\mathbb L^1_{k+1}$ and in  $\mathbb R^1_{k+1}$, and $\con({\bf u}) = \mul({\bf u})$.
\end{prop}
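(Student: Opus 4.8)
The plan is to establish the proposition by proving the cyclic chain of inclusions
\[
\mathbb E^1_k \vee \overline{\mathbb E^1_k}\ \subseteq\ \mathbb W\ \subseteq\ \mathbb E^1_{k+1} \wedge \overline{\mathbb E^1_{k+1}}\ \subseteq\ \mathbb E^1_k \vee \overline{\mathbb E^1_k},
\]
where $\mathbb W$ denotes the middle variety $\var\{xtx \approx xtx^2 \approx x^2tx,\ x^2y^2 \approx (xy)^2,\ \mathbf u \approx \mathbf v\}$. At the outset I would record: $\mathbb W \subseteq \mathbb Q^1 \vee \mathbb B^1$ by Lemma~\ref{L: xyxy}(i); $\mathbb W \wedge \mathbb B^1 = \var\{x \approx x^2,\ \mathbf u \approx \mathbf v\} = \mathbb R^1_k \vee \mathbb L^1_k = \mathbb R^1_{k+1} \wedge \mathbb L^1_{k+1}$ by Corollary~\ref{C: struct of L(B)}; the variety $\mathbb Q^1$ is self-dual (immediate from Fact~\ref{F: Q}), so $\mathbb Q^1 \vee \mathbb B^1$ is self-dual and Fact~\ref{F: Ek} and Observation~\ref{O: rb} have obvious duals obtained by exchanging $\mathbb L$'s with $\mathbb R$'s and $\mathbb E^1_j$ with $\overline{\mathbb E^1_j}$; and, since $\mathbf u \approx \mathbf v$ holds in a variety of bands and $\con(\mathbf u) = \mul(\mathbf u)$, both $\mathbf u$ and $\mathbf v$ consist of a single block.

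For $\mathbb E^1_k \vee \overline{\mathbb E^1_k} \subseteq \mathbb W$ it suffices to check that $\mathbf u \approx \mathbf v$ holds in $\mathbb E^1_k$ and in $\overline{\mathbb E^1_k}$ (the other two defining identities of $\mathbb W$ hold there because $\mathbb E^1_k$ and $\overline{\mathbb E^1_k}$ are subvarieties of $\mathbb Q^1 \vee \mathbb B^1$). Since $\mathbf u,\mathbf v$ are single blocks with $\con(\mathbf u) = \con(\mathbf v)$, Fact~\ref{F: Q} gives $Q^1 \models \mathbf u \approx \mathbf v$, and the one block pair $(\mathbf u, \mathbf v)$ is an identity of $\mathbb R^1_k$ and of $\mathbb L^1_k$, both being subvarieties of $\mathbb R^1_k \vee \mathbb L^1_k$; thus $\mathbf u \approx \mathbf v$ is block-$\mathbb R^1_k$-balanced and block-$\mathbb L^1_k$-balanced, and Observation~\ref{O: rb} and its dual finish this inclusion. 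For $\mathbb W \subseteq \mathbb E^1_{k+1} \wedge \overline{\mathbb E^1_{k+1}}$: $\mathbb W \subseteq \mathbb Q^1 \vee \mathbb B^1$, and since $\mathbf u \approx \mathbf v$ is an identity of $\mathbb W$ failing both in $\mathbb L^1_{k+1}$ and in $\mathbb R^1_{k+1}$, neither of these two band varieties is contained in $\mathbb W$; Fact~\ref{F: Ek} applied with $k+1$, together with its dual, then gives $\mathbb W \subseteq \mathbb E^1_{k+1}$ and $\mathbb W \subseteq \overline{\mathbb E^1_{k+1}}$.

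The third inclusion is the heart of the argument. Let $M \in \mathbb E^1_{k+1} \wedge \overline{\mathbb E^1_{k+1}}$ and let $\mathbb V$ be the variety generated by $M$. Then $\mathbb V \wedge \mathbb B^1$ is a band variety contained in $\mathbb E^1_{k+1} \wedge \mathbb B^1$ (which contains no $\mathbb L^1_{k+1}$) and in $\overline{\mathbb E^1_{k+1}} \wedge \mathbb B^1$ (which contains no $\mathbb R^1_{k+1}$), so by Proposition~\ref{P: struct of L(B)} it lies in $\mathbb R^1_{k+1} \wedge \mathbb L^1_{k+1} = \mathbb R^1_k \vee \mathbb L^1_k$. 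Now I show $M$ satisfies every identity of $\mathbb E^1_k \vee \overline{\mathbb E^1_k}$; by Observation~\ref{O: rb} and its dual such an identity $\mathbf p \approx \mathbf q$ satisfies $Q^1 \models \mathbf p \approx \mathbf q$, and each pair $(\mathbf p_i, \mathbf q_i)$ of corresponding blocks is an identity of $\mathbb R^1_k \vee \mathbb L^1_k$. Replacing every letter occurring inside a block by its square turns $\mathbf p$ into a word $\widetilde{\mathbf p}$ and $\mathbf q$ into $\widetilde{\mathbf q}$ such that $M \models \mathbf p \approx \widetilde{\mathbf p}$ and $M \models \mathbf q \approx \widetilde{\mathbf q}$ by Lemma~\ref{L: xyxy}(ii), and now each block $\widetilde{\mathbf p_i}$ has $\con(\widetilde{\mathbf p_i}) = \mul(\widetilde{\mathbf p_i})$. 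Since $\widetilde{\mathbf p_i} \approx \widetilde{\mathbf q_i}$ still holds in $\mathbb R^1_k \vee \mathbb L^1_k \supseteq \mathbb V \wedge \mathbb B^1$, Observation~\ref{O: B} applied to $\mathbb V$ yields $M \models \widetilde{\mathbf p_i} \approx \widetilde{\mathbf q_i}$ for each $i$; performing these block substitutions one after another (each block being a contiguous subword whose letters avoid the simple letters of $\mathbf p$) gives $M \models \widetilde{\mathbf p} \approx \widetilde{\mathbf q}$, hence $M \models \mathbf p \approx \mathbf q$. Thus $M \in \mathbb E^1_k \vee \overline{\mathbb E^1_k}$, and the cycle closes, proving all three varieties equal.

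I expect the main obstacle to be this last inclusion: one must bridge the gap between the identities available in $\mathbb Q^1 \vee \mathbb B^1$, which only give $x^2 \approx x^3$ rather than $x \approx x^2$, and the band identities of $\mathbb R^1_k \vee \mathbb L^1_k$ that control the blocks. The squaring trick turns every block into a word whose content equals its set of multiple letters, after which Observation~\ref{O: B} --- the tool that lifts a band identity on the monoid of idempotents back to the whole monoid in exactly that situation --- does the work; the remaining care is purely bookkeeping, namely that squaring and block-wise substitution do not disturb the partition into blocks or the set of simple letters, so that Fact~\ref{F: Q} and Lemma~\ref{L: xyxy}(ii) remain applicable throughout.
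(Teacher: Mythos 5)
Your argument is correct, and for the second equality it takes a genuinely different route from the paper's. The first equality $\mathbb E^1_k \vee \overline{\mathbb E^1_k}=\var\{xtx \approx xtx^2 \approx x^2tx,\ x^2y^2\approx(xy)^2,\ \mathbf u\approx\mathbf v\}$ is handled in both proofs the same way, via Observation~\ref{O: rb} and its dual. For the identification with $\mathbb E^1_{k+1}\wedge\overline{\mathbb E^1_{k+1}}$, the paper presents the meet as the variety defined by the union of the two defining sets and invokes Corollary~\ref{C: struct of L(B)} to trade the pair $\{\mathbf u_1\approx\mathbf v_1,\ \mathbf u_2\approx\mathbf v_2\}$ for the single identity $\mathbf u\approx\mathbf v$ ``modulo $x\approx x^2$,'' leaving implicit how that band-level equivalence is transported to varieties satisfying only $x^2\approx x^3$. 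You instead prove the two inclusions separately: $\mathbb W\subseteq\mathbb E^1_{k+1}\wedge\overline{\mathbb E^1_{k+1}}$ via the maximality characterization of Fact~\ref{F: Ek} and its dual, and the reverse inclusion element-wise by squaring the letters of each block and applying Observation~\ref{O: B} block by block. This is longer but makes the lifting from band identities to the aperiodic setting fully explicit; it uses only tools already present in the paper, so nothing extra is needed. One shared caveat: both your proof and the paper's implicitly read the hypothesis $\con(\mathbf u)=\mul(\mathbf u)$ as applying to both sides of the identity. This does not follow from the stated hypotheses alone --- for instance $xs^2z^2x\approx xsxzx$ holds in $\mathbb L^1_2\vee\mathbb R^1_2$, fails in $\mathbb L^1_3$ and $\mathbb R^1_3$, and has $\con=\mul$ on the left, yet its right-hand side has simple letters, so it fails in $Q^1$ and hence in $\mathbb E^1_2\vee\overline{\mathbb E^1_2}$ --- so your remark that ``both $\mathbf u$ and $\mathbf v$ consist of a single block'' rests on this convention rather than on the band identity, and you should state it as an assumption rather than derive it.
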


\begin{proof} 
Observation~\ref{O: rb} and its dual imply that
\[
\mathbb E^1_k \vee \overline{\mathbb E^1_k}  \subseteq \var  \{xtx \approx xtx^2 \approx x^2tx, x^2y^2 \approx (xy)^2,  {\bf u} \approx {\bf v}\}.
\]      
Conversely, let ${\bf U} \approx {\bf V}$ be an identity of  $\mathbb E^1_k \vee \overline{\mathbb E^1_k}$. Then it holds in $\mathbb Q^1$ and its corresponding blocks form the identities which hold both in $\mathbb R_k^1$ and in $\mathbb L_k^1$ by Observation~\ref{O: rb} and its dual.
So, we can derive ${\bf U} \approx {\bf V}$ from  $\{xtx \approx xtx^2 \approx x^2tx, x^2y^2 \approx (xy)^2,  {\bf u} \approx {\bf v}\}$ by applying ${\bf u} \approx {\bf v}$ within the blocks. 
Therefore,
\[
\mathbb E^1_k \vee \overline{\mathbb E^1_k}  = \var  \{xtx \approx xtx^2 \approx x^2tx, x^2y^2 \approx (xy)^2,  {\bf u} \approx {\bf v}\}.
\]      
Since
\[
\mathbb E^1_{k+1} \wedge \overline{\mathbb E^1_{k+1}} = \var  \{xtx \approx xtx^2 \approx x^2tx, x^2y^2 \approx (xy)^2,  {\bf u}_1 \approx {\bf v}_1, {\bf u}_2 \approx {\bf v}_2 \},
\]
where ${\bf u}_1 \approx {\bf v}_1$  holds in $\mathbb R_{k+1}^1$ and fails in $\mathbb L_{k+1}^1$ with $\con({\bf u}_1) = \mul({\bf u}_1)$, ${\bf u}_2 \approx {\bf v}_2$  holds in $\mathbb L_{k+1}^1$ and fails in $\mathbb R_{k+1}^1$ with $\con({\bf u}_2) = \mul({\bf u}_2)$, Corollary~\ref{C: struct of L(B)} implies that the sets of identities $\{{\bf u}_1 \approx {\bf v}_1, {\bf u}_2 \approx {\bf v}_2 \}$ and $\{{\bf u} \approx {\bf v}\}$ are equivalent modulo $x \approx x^2$. 
Therefore,
\[
\mathbb E^1_{k+1} \wedge \overline{\mathbb E^1_{k+1}} = \var  \{xtx \approx xtx^2 \approx x^2tx, x^2y^2 \approx (xy)^2,  {\bf u} \approx {\bf v} \},
\]
as required.
\end{proof}


\begin{cor} 
\label{C: EEk}
For each $k \ge 2$, $\mathbb E^1_k \vee \overline{\mathbb E^1_k}$ is the largest subvariety of  $\mathbb Q^1 \vee \mathbb B^{1}$ which contains neither $\mathbb L_{k+1}^1$ nor $\mathbb R_{k+1}^1$.
\end{cor}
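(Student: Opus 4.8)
The plan is to derive the statement formally from Fact~\ref{F: Ek}, its dual, and Proposition~\ref{P: EE}, with no new combinatorics on words. The guiding principle is purely order-theoretic: if $\mathcal A$ is the largest subvariety of a fixed variety $\mathbb W$ that does not contain a variety $\mathbb X$, and $\mathcal B$ is the largest subvariety of $\mathbb W$ that does not contain $\mathbb Y$, then $\mathcal A\wedge\mathcal B$ is the largest subvariety of $\mathbb W$ that contains neither $\mathbb X$ nor $\mathbb Y$; indeed $\mathcal A\wedge\mathcal B$ lies in $\mathcal A$ and in $\mathcal B$, hence contains neither $\mathbb X$ nor $\mathbb Y$, while any $\mathbb V\subseteq\mathbb W$ containing neither satisfies $\mathbb V\subseteq\mathcal A$ and $\mathbb V\subseteq\mathcal B$, hence $\mathbb V\subseteq\mathcal A\wedge\mathcal B$. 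I would apply this with $\mathbb W=\mathbb Q^1\vee\mathbb B^1$, $\mathbb X=\mathbb L^1_{k+1}$ and $\mathbb Y=\mathbb R^1_{k+1}$.

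First I would record the dual form of Fact~\ref{F: Ek}. Since the identity $x\approx x^2$ is self-dual we have $\overline{\mathbb L^1_n}=\mathbb R^1_n$ for every $n\ge 2$, and $\mathbb Q^1\vee\mathbb B^1$ is self-dual because $\mathbb B^1$ is self-dual and $\mathbb Q^1$ is self-dual (the anti-automorphism of $Q^1$ fixing $1$ and $e$ and interchanging $b$ and $c$ witnesses this on the generating monoid). Dualizing Fact~\ref{F: Ek} therefore gives: for each $k\ge2$, $\overline{\mathbb E^1_k}$ is the largest subvariety of $\mathbb Q^1\vee\mathbb B^1$ that does not contain $\mathbb R^1_k$. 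Now I would invoke Fact~\ref{F: Ek} and this dual statement with $k$ replaced by $k+1$: together with the order-theoretic principle above they show that $\mathbb E^1_{k+1}\wedge\overline{\mathbb E^1_{k+1}}$ is the largest subvariety of $\mathbb Q^1\vee\mathbb B^1$ containing neither $\mathbb L^1_{k+1}$ nor $\mathbb R^1_{k+1}$. Finally Proposition~\ref{P: EE} identifies $\mathbb E^1_{k+1}\wedge\overline{\mathbb E^1_{k+1}}$ with $\mathbb E^1_k\vee\overline{\mathbb E^1_k}$, which is therefore also the largest such subvariety, as required.

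I do not anticipate any genuine obstacle: once Fact~\ref{F: Ek} and Proposition~\ref{P: EE} are in hand the argument is a short bookkeeping exercise. The only point that warrants a line of verification is that $\mathbb Q^1\vee\mathbb B^1$ is closed under duality, so that ``the dual of Fact~\ref{F: Ek}'' is legitimately a statement about the same ambient variety; this in turn reduces to the self-duality of $Q^1$ indicated above.
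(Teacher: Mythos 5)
Your proof is correct and follows essentially the same route as the paper: the paper likewise deduces from Fact~\ref{F: Ek} and its dual that any subvariety of $\mathbb Q^1\vee\mathbb B^1$ containing neither $\mathbb L^1_{k+1}$ nor $\mathbb R^1_{k+1}$ lies in $\mathbb E^1_{k+1}\wedge\overline{\mathbb E^1_{k+1}}$, and then applies Proposition~\ref{P: EE} to identify this meet with $\mathbb E^1_k\vee\overline{\mathbb E^1_k}$. Your explicit checks (the order-theoretic principle and the self-duality of $\mathbb Q^1\vee\mathbb B^1$ via the anti-automorphism of $Q^1$ swapping $b$ and $c$) are points the paper leaves implicit, and they are accurate.
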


\begin{proof} 
Recall that,  by Lemma~\ref{L: xyxy}(i), $\mathbb Q^1 \vee \mathbb B^{1}= \var  \{xtx \approx xtx^2 \approx x^2tx, x^2y^2 \approx (xy)^2 \}$.
Let $\mathbb V$ be a subvariety of $\mathbb Q^1 \vee \mathbb B^{1}$ which contains neither $\mathbb L_{k+1}^1$ nor $\mathbb R_{k+1}^1$. 
Then $\mathbb V$ is contained in $\mathbb E^1_{k+1} \wedge \overline{\mathbb E^1_{k+1}}$ by Fact~\ref{F: Ek} and its dual.
Since $\mathbb E^1_k \vee \overline{\mathbb E^1_k}   =  \mathbb E^1_{k+1} \wedge \overline{\mathbb E^1_{k+1}}$ by Proposition~\ref{P: EE}, 
$\mathbb V$ is contained in $\mathbb E^1_k \vee \overline{\mathbb E^1_k}$.
\end{proof}

Compare the following with Example~\ref{E: struct of L(B)}.

\begin{ex} 
\label{E: EE}
\quad
\begin{itemize}
\item[\textup{(i)}] $\mathbb E^1 \vee \overline{\mathbb E^1}=\var\{xtx \approx xtx^2 \approx x^2tx,\,x^2y^2 \approx (xy)^2,\, xs^2z^2x \approx xs^2xz^2x\}= \mathbb E_3^1 \wedge \overline{\mathbb E_3^1}$.
\item[\textup{(ii)}] $\mathbb E^1_3 =  \var \{xtx \approx xtx^2 \approx x^2tx,\, x^2y^2 \approx (xy)^2,  xyt^2xy \approx xyx t^2 xy\}$.
\end{itemize}
\end{ex}

\begin{proof} 
(i) The identity  $xs^2z^2x \approx xs^2xz^2x$ holds in $\mathbb R_2^1\vee\mathbb L_2^1$ by Fact~\ref{F: L2}.
On the other hand, it fails in $\mathbb R_3^1$ and in $\mathbb L_3^1$ by Fact~\ref{F: R3} and its dual.
Therefore, $\mathbb E^1 \vee \overline{\mathbb E^1}=\var\{xtx \approx xtx^2 \approx x^2tx,\,x^2y^2 \approx (xy)^2,\, xs^2z^2x \approx xs^2xz^2x\}= \mathbb E_3^1 \wedge \overline{\mathbb E_3^1}$ by  Proposition~\ref{P: EE}.

\smallskip

(ii) The identity  $xyt^2xy \approx xyx t^2 xy$ holds in $\mathbb R_3^1$ by Facts~\ref{F: L2} and~\ref{F: R3}.
On the other hand, it fails in $\mathbb L_3^1$ by the dual of Fact~\ref{F: R3}.
Therefore, $\mathbb E^1_3 =  \var \{xtx \approx xtx^2 \approx x^2tx,\, x^2y^2 \approx (xy)^2,  xyt^2xy \approx xyx t^2 xy\}$ by the very definition.
\end{proof}

\begin{obs} 
\label{O: same}
The following pairs of sets of identities are equivalent within the variety $\mathbb Q^1 \vee \mathbb B^{1}$:
\begin{itemize}
\item[\textup{(i)}] $\{ xy t yx \approx xy t xyx\}$ and $\{ xy t xy \approx xy t yxy\}$;
\item[\textup{(ii)}] $\{ xy t yx \approx xyx t xyx\}$ and $\{ xy t yx \approx xy t xyx,    xy t yx \approx xyx t yx \}$;
\item[\textup{(iii)}] $\{x^2y^2 \cdot {\bf a}_1 x {\bf a}_2 y \cdots {\bf a}_n x \approx xy^2x \cdot {\bf a}_1 x {\bf a}_2 y \cdots {\bf a}_n x\}$ and $\{ x^2y^2 \cdot {\bf a}_1 y {\bf a}_2 x \cdots {\bf a}_n y \approx xy^2x \cdot {\bf a}_1 y {\bf a}_2 x \cdots {\bf a}_n y\}$, where $n \ge 0$ and $\{x,y\} \cap \con ({\bf a}_1 \dots {\bf a}_n) =\emptyset$.
\item[\textup{(iv)}] $\{ xy t^2 y s x \approx xyx t^2 y s x\}$ and $\{ xy t^2 x s y \approx xyx t^2 x s y\}$;
\item[\textup{(v)}] $\{ x^2y^2 t y  \approx xy^2x t y\}$ and $\{ x^2y^2 t x \approx xy^2x t x\}$.
\end{itemize}
\end{obs}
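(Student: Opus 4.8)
The plan is to prove each of the five equivalences by showing, in both directions, that every identity in one of the two listed sets is a consequence of the other set together with $\Id(\mathbb Q^1\vee\mathbb B^1)$. The derivations use only three kinds of steps: substituting a letter by a word, multiplying an identity by a word on the left or on the right, and applying an identity of $\mathbb Q^1\vee\mathbb B^1$. For the last of these the decisive tool is Lemma~\ref{L: xyxy}(ii): modulo $\mathbb Q^1\vee\mathbb B^1$ one may replace any block of a word by an arbitrary $\mathbb B^1$-equivalent word. Concretely, inside a block we will collapse a repeated block $(xy)^2$ to $xy$, duplicate or delete letters as permitted by $\Id(\mathbb B^1)$ (Fact~\ref{F: x=xx}), and insert $\mathbf w$ between $\mathbf u$ and $\mathbf v$ whenever $\con(\mathbf w)\subseteq\con(\mathbf u)=\con(\mathbf v)$ (Corollary~\ref{C: uwu}).

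Case (i) already displays the template. Right-multiplying $xytyx\approx xytxyx$ by $y$ gives $xytyxy\approx xytxyxy$, and collapsing the block $xyxy$ to $xy$ on the right yields $xytxy\approx xytyxy$; conversely, right-multiplying $xytxy\approx xytyxy$ by $x$ and collapsing $yxyx$ to $yx$ returns $xytyx\approx xytxyx$. Case (ii) is the same with a combination step: substituting $y\mapsto xy$ (respectively $y\mapsto yx$) into $xytyx\approx xyxtxyx$ and collapsing squares inside the two blocks gives $xytxyx\approx xyxtxyx$ (respectively $xyxtyx\approx xyxtxyx$), and combining each with the original identity produces the two identities of the second set; conversely, substituting $t\mapsto txy$ into $xytyx\approx xyxtyx$, collapsing $xyyx$ to $xyx$ inside the block, and combining with $xytyx\approx xytxyx$ recovers the single identity.

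Cases (iii)--(v) are where the real work lies. The naive idea of passing between the two sides by the interchange $x\leftrightarrow y$ fails, since that interchange is not a symmetry of $\mathbb Q^1\vee\mathbb B^1$ --- already $x^2y^2\not\approx y^2x^2$ there. The fix is to first normalize the ``heads'' within their blocks before invoking the hypothesis: using block-idempotency one freely rewrites $xy\rightsquigarrow x^2y^2$, $xyx\rightsquigarrow xy^2x$, $xy^2x\rightsquigarrow xyx$ and the like, each step being legitimate because the associated $\ell$- and $r$-words coincide in $\mathbb B^1$ by Fact~\ref{F: x=xx}. The parametrized pair (iii) carries one further observation: the instance $n=0$ belongs to both families --- it is just $x^2y^2\approx xy^2x$ --- and every instance of either family results from it by right-multiplying by the appropriate alternating tail $\mathbf a_1x\mathbf a_2y\cdots$ or $\mathbf a_1y\mathbf a_2x\cdots$; hence both families are equivalent, indeed each is equivalent to the single identity $x^2y^2\approx xy^2x$ (and if the alternation is read as forcing $n\ge1$, the same reduction is run by induction on the length of the tail). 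Cases (iv) and (v) are then finished by the head-normalization recipe plus one more substitution threading the two identities of the pair through one another. The step I expect to cost the most effort is pinning down that substitution in (iv) and (v) and verifying the small $\mathbb B^1$-identities used in the head-normalizations, all via Fact~\ref{F: x=xx}.
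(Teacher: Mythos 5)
Your derivations for parts (i) and (ii) are correct: the right-multiplications, the substitutions into $y$ and $t$, and the block collapses you invoke are all legitimate by Lemma~\ref{L: xyxy}(ii), and they do reach the stated identities (by a mildly different route from the paper, which substitutes into $t$, multiplies on the left and then renames the letters). The proposal fails at part (iii), and since you leave the ``substitution threading the two identities through one another'' for (iv) and (v) unspecified, those parts are not established either. Part (iii) asserts, for each \emph{fixed} $n$ and fixed $\mathbf a_1,\dots,\mathbf a_n$, that the two displayed identities are interderivable modulo $\Id(\mathbb Q^1\vee\mathbb B^1)$; this fixed-parameter form is what is used later (for instance, (v) is the instance $n=1$, $\mathbf a_1=t$). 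Your reduction of every instance to the $n=0$ identity $x^2y^2\approx xy^2x$ works in one direction only: right-multiplication derives the long identity from the short one, but the long identity does \emph{not} imply the short one within $\mathbb Q^1\vee\mathbb B^1$. Concretely, $\mathbb R_2^1\subseteq\mathbb B^1\subseteq\mathbb Q^1\vee\mathbb B^1$ satisfies $x^2y^2tx\approx xy^2xtx$ (both sides have $\fin$ equal to $ytx$, so Fact~\ref{F: L2}(ii) applies) but violates $x^2y^2\approx xy^2x$ (the $\fin$-words are $xy$ and $yx$); hence the latter cannot follow from the former together with $\Id(\mathbb Q^1\vee\mathbb B^1)$. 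Appending alternating tails strictly weakens such identities --- this is precisely the phenomenon behind the strict chain $\mathbb E^1\{\sigma_1\}\subset\mathbb E^1\{\sigma_2\}\subset\cdots$ and the chain $\mathbb V_1\subset\mathbb V_2\subset\cdots$ in Theorem~\ref{T: EER} --- so no argument that shortens the tail can succeed.

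The device you rejected is in fact the right one. Renaming the letters of an identity is always a sound derivation step (it is a substitution), irrespective of whether $x^2y^2\approx y^2x^2$ holds; what you correctly observed is only that the bare renaming of $x^2y^2\cdots\approx xy^2x\cdots$ does not literally produce the target. The missing ingredient is one left-multiplication: multiply $x^2y^2\,\mathbf a_1x\mathbf a_2y\cdots\approx xy^2x\,\mathbf a_1x\mathbf a_2y\cdots$ by $y$ on the left. Inside the first blocks, $yx^2y^2$ and $yxy^2x$ are $\mathbb B^1$-equivalent to $yx^2y$ and $y^2x^2$, respectively (for words with content $\{x,y\}$ the $\mathbb B^1$-class is determined by the first and last letters), so by Lemma~\ref{L: xyxy}(ii) the resulting identity is equivalent within $\mathbb Q^1\vee\mathbb B^1$ to $yx^2y\,\mathbf a_1x\mathbf a_2y\cdots\approx y^2x^2\,\mathbf a_1x\mathbf a_2y\cdots$. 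Renaming $x\leftrightarrow y$ now yields exactly $xy^2x\,\mathbf a_1y\mathbf a_2x\cdots\approx x^2y^2\,\mathbf a_1y\mathbf a_2x\cdots$, the second identity of the pair; the converse direction is symmetric. Parts (iv) and (v) are then genuine instances of (iii) once the heads $xy$ and $xyx$ are normalized to $x^2y^2$ and $xy^2x$ within their blocks, again via Lemma~\ref{L: xyxy}(ii).
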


\begin{proof} 
(i) If we replace $t$ by $xt$ and multiply  $xy t yx \approx xy t xyx$ by $y$ on the left, then we obtain an identity which is equivalent within $\mathbb Q^1 \vee \mathbb B^{1}$ to $yx tyx \approx yxt xyx$.
Swapping $x$ and $y$ we obtain $xy t xy \approx xy t yxy$.

Conversely, if we replace $t$ by $xt$ and multiply  $xy t xy \approx xy t yxy$ by $y$ on the left, then we obtain an identity which is equivalent within $\mathbb Q^1 \vee \mathbb B^{1}$ to $yx t xy \approx yxt yxy$.
Swapping $x$ and $y$ we obtain $xy t yx \approx xy t xyx$.

\smallskip

(ii) First, $xy t yx \approx xyx t xyx$ implies $xy t yx \approx xy t xyx$ within $\mathbb Q^1 \vee \mathbb B^{1}$ because of the following deduction:
\[
xy t yx \approx (xy) xy t (yx) \stackrel{\mathbb Q^1 \vee \mathbb B^{1}}\approx (xyx) xy t (xyx) \approx xy t xyx.
\] 
Similarly, $xy t yx \approx xyx t xyx$ implies $xy t yx \approx xyx t yx$ within $\mathbb Q^1 \vee \mathbb B^{1}$. 

Reversely, $\{ xy t yx \approx xy t xyx, xy t yx \approx xyx t yx \}$ implies $xy t yx \approx xyx t xyx$ within $\mathbb Q^1 \vee \mathbb B^{1}$ as follows:
\[ 
xy t yx \approx   xy t xyx \approx xy (xy) t x (yx) \approx     xy (xyx) t x (yx)  \approx   xyx t xyx,
\]
and we are done.

\smallskip

(iii) Multiplying both sides of  $x^2y^2 \cdot {\bf a}_1 x {\bf a}_2 y \cdots {\bf a}_n x \approx xy^2x \cdot {\bf a}_1 x {\bf a}_2 y \cdots {\bf a}_n x$ by $y$ on the left and using $\{xtx \approx xtx^2 \approx x^2tx, x^2y^2 \approx (xy)^2 \}$ gives an identity which is equivalent to $x^2y^2 \cdot {\bf a}_1 y {\bf a}_2 x \cdots {\bf a}_n y \approx xy^2x \cdot {\bf a}_1 y {\bf a}_2 x \cdots {\bf a}_n y$ modulo swapping $x$ and $y$.

\smallskip

Parts (iv) and (v) readily follow from Part (iii).
\end{proof}

\section{The variety $\mathbb E^1 \vee \overline{\mathbb E^1}$ is HFB} 
\label{sec: EEhfb}

\begin{lemma} 
\label{L: fb} 
Let $\mathbb V$, $\mathbb S$ and $\mathbb P$ be three varieties such that $\mathbb P \subseteq \mathbb V$ and $\mathbb P \subseteq \mathbb S$.
Let $\Sigma$ be a set of identities such that $\mathbb V$ satisfies $\Sigma$.

Let $\Dist(\mathbb P \rightarrow  \mathbb S)$ be a function which associates with each identity ${\bf u} \approx {\bf v}$ of $\mathbb P$ a set $\Dist(\mathbb P \rightarrow \mathbb S)({\bf u} \approx {\bf v})$ so that the set $\Dist(\mathbb P \rightarrow \mathbb S)({\bf u} \approx {\bf v})$ is empty if and only if ${\bf u} \approx {\bf v}$ holds on $\mathbb S$.

Suppose that for every identity ${\bf u} \approx {\bf v}$ of $\mathbb V$ which fails on $\mathbb S$, one can find a word ${\bf u}_1$ such that $\Sigma$ implies ${\bf u} \approx {\bf u}_1$ and $|\Dist(\mathbb P \rightarrow  \mathbb S)({\bf u}_1 \approx {\bf v})| < |\Dist(\mathbb P \rightarrow \mathbb S)({\bf u} \approx {\bf v})|$.

Then every identity of $\mathbb V$ can be derived from $\Sigma$ and an identity of $\mathbb V \vee \mathbb S$.
\end{lemma}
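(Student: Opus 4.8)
The plan is to prove the lemma by induction on the cardinality $n = |\Dist(\mathbb P \rightarrow \mathbb S)({\bf u} \approx {\bf v})|$. Note first that this quantity is well-defined for \emph{every} identity ${\bf u} \approx {\bf v}$ of $\mathbb V$: since $\mathbb P \subseteq \mathbb V$, such an identity holds on $\mathbb P$, so $\Dist(\mathbb P \rightarrow \mathbb S)({\bf u} \approx {\bf v})$ is defined. The assertion I would prove by induction on $n$ is precisely the statement of the lemma: every identity ${\bf u} \approx {\bf v}$ of $\mathbb V$ can be derived from $\Sigma$ together with a single identity of $\mathbb V \vee \mathbb S$.

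For the base case $n = 0$, the defining property of the function $\Dist$ gives that ${\bf u} \approx {\bf v}$ holds on $\mathbb S$; it also holds on $\mathbb V$ by assumption, hence on $\mathbb V \vee \mathbb S$. Thus ${\bf u} \approx {\bf v}$ is itself an identity of $\mathbb V \vee \mathbb S$, and it is derivable from $\Sigma$ and this identity trivially (indeed $\Sigma$ is not needed here).

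For the inductive step $n \ge 1$, the set $\Dist(\mathbb P \rightarrow \mathbb S)({\bf u} \approx {\bf v})$ is nonempty, so ${\bf u} \approx {\bf v}$ fails on $\mathbb S$. The hypothesis of the lemma then supplies a word ${\bf u}_1$ with $\Sigma \vdash {\bf u} \approx {\bf u}_1$ and $|\Dist(\mathbb P \rightarrow \mathbb S)({\bf u}_1 \approx {\bf v})| < n$. Before invoking the induction hypothesis I would check that ${\bf u}_1 \approx {\bf v}$ is again an identity of $\mathbb V$: since $\mathbb V$ satisfies $\Sigma$ and $\Sigma \vdash {\bf u} \approx {\bf u}_1$, the variety $\mathbb V$ satisfies ${\bf u} \approx {\bf u}_1$, and combining this with ${\bf u} \approx {\bf v}$ yields ${\bf u}_1 \approx {\bf v}$ on $\mathbb V$. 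Hence the induction hypothesis applies to ${\bf u}_1 \approx {\bf v}$ (whose $\Dist$-value has strictly smaller cardinality), producing an identity ${\bf p} \approx {\bf q}$ of $\mathbb V \vee \mathbb S$ with $\Sigma \cup \{{\bf p} \approx {\bf q}\} \vdash {\bf u}_1 \approx {\bf v}$. Concatenating the derivation of ${\bf u} \approx {\bf u}_1$ from $\Sigma$ with this one gives $\Sigma \cup \{{\bf p} \approx {\bf q}\} \vdash {\bf u} \approx {\bf v}$, as required.

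This is essentially a routine descent argument in the style of the usual finite-basis reductions, and I do not expect a genuine obstacle in it. The only points warranting care are the bookkeeping in the inductive step — namely, confirming that the word ${\bf u}_1$ furnished by the hypothesis still pairs with ${\bf v}$ into an identity of $\mathbb V$, so that $\Dist$ is defined on ${\bf u}_1 \approx {\bf v}$ and the induction parameter really decreases — and the (immediate) fact that deducibility of identities composes transitively, so that the two derivations can be chained. The real work lies not in the lemma but in verifying its hypotheses in the intended application to $\mathbb E^1 \vee \overline{\mathbb E^1}$, where one must exhibit a suitable $\Dist$ function and the distance-decreasing rewriting step.
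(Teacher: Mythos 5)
Your proof is correct and is essentially the argument the paper uses: the paper outsources the well-founded descent on $|\Dist(\mathbb P \rightarrow \mathbb S)({\bf u} \approx {\bf v})|$ to Lemma~3.1 of Sapir (2015) and then observes that the terminal identity, holding on both $\mathbb S$ and $\mathbb V$, is an identity of $\mathbb V \vee \mathbb S$, whereas you carry out the same descent explicitly by induction on the cardinality. The bookkeeping points you flag (that ${\bf u}_1 \approx {\bf v}$ remains an identity of $\mathbb V$ because $\mathbb V$ satisfies $\Sigma$, and that derivations chain) are exactly the ones needed, so there is no gap.
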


\begin{proof} 
Let $\Dist(\mathbb V \rightarrow  \mathbb S)$ be the restriction of $\Dist(\mathbb P \rightarrow  \mathbb S)$ to the identities of $\mathbb V$.
Then by Lemma~3.1 in \cite{Sapir-15} and its proof, for every identity ${\bf u} \approx {\bf v}$ of $\mathbb V$ there is a derivation 
\[
{\bf u}={\bf u}_0 \approx {\bf u}_1 \approx {\bf u}_2 \approx \dots \approx {\bf u}_k \approx {\bf v},
\]
such that for each $i \ge 0$, the identity  ${\bf u}_i \approx {\bf u}_{i+1}$ follows from $\Sigma$ and
${\bf u}_k \approx {\bf v}$ holds on $\mathbb S$. 
Since $\mathbb V$ satisfies $\Sigma$, the identity ${\bf u}_k \approx {\bf v}$ holds on $\mathbb V \vee \mathbb S$. 
\end{proof}

Let ${\bf u} \approx {\bf v}$ be an identity of $\mathbb Q^1$. 
Define $\Dist(\mathbb Q^1 \rightarrow \mathbb E^1)$ as the set of all unordered pairs  of occurrences $\{{_{1{\bf a}}x},  {_{1{\bf a}}y}\}$ such that $x,y \in \con({\bf u})=\con({\bf v})$,  ${\bf a}$ is a block of ${\bf u}$ and one of the following holds: 
\begin{itemize}
\item {$({_{1{\bf a}}x}) <_{\bf a} ({_{1{\bf a}}y})$ but  $({_{1{\bf b}}y}) <_{\bf b} ({_{1{\bf b}}x})$;}
\item {$({_{1{\bf a}}y}) <_{\bf a} ({_{1{\bf a}}x})$ but  $({_{1{\bf b}}x}) <_{\bf b} ({_{1{\bf b}}y})$,}
\end{itemize}
where $\bf b$ is the block of $\bf v$ corresponding to $\bf a$. 
Notice that the dual to Observation~\ref{O: rb} implies that $\Dist(\mathbb Q^1 \rightarrow \mathbb E^1)({\bf u} \approx {\bf v})$ is empty if and only if  ${\bf u} \approx {\bf v}$ holds on $\mathbb E^1$.
 
Let $Z(x,y)$ denote the set of all words $\bf u$ with $\mul({\bf u})=\{x,y\}$ such that each block of $\bf u$ is either $x$ or $y$, and no two blocks containing the same letter are adjacent.
Denote:
\begin{itemize}
\item $\Phi = \{{\bf c} t x^2y^2 \approx {\bf c} t y x^2y\mid {\bf c} \in  Z  (x, y) \}$;
\item $\overline{\Phi} = \{x^2y^2 t {\bf d} \approx  xy^2x t {\bf d}\mid {\bf d} \in  Z  (x, y) \}$;
\item $\phi: xy t xy \approx xy t yxy$; 
\item $\overline{\phi}: xy t xy \approx xyx t xy$.
\end{itemize}

When ${_{1{\bf u}}x}$ and  ${_{1{\bf u}}y}$ are adjacent in $\bf u$ and $({_{1{\bf u}}x}) <_{\bf u} ({_{1{\bf u}}y})$, we write $({_{1{\bf u}}x}) \ll_{\bf u} ({_{1{\bf u}}y})$.

\begin{fact} 
\label{F: E23}
Let $\mathbb V$ be a variety such that $\mathbb Q^1 \vee \mathbb L_2^1 \vee \mathbb R_2^1 \subseteq \mathbb V \subseteq \mathbb E^1 \vee \overline{\mathbb E^1}$. 
Then every identity of $\mathbb V$  follows within $\mathbb E^1 \vee \overline{\mathbb E^1}$ from $(\Phi \cup \{\phi\}) \cap \Id(\mathbb V)$ and an identity of $\mathbb V \vee \mathbb E^1$.
\end{fact}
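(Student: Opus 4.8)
The plan is to obtain Fact~\ref{F: E23} by invoking Lemma~\ref{L: fb} with $\mathbb P=\mathbb Q^1$, $\mathbb S=\mathbb E^1$, with $\Dist(\mathbb P\rightarrow\mathbb S)$ taken to be the function $\Dist(\mathbb Q^1\rightarrow\mathbb E^1)$ introduced above (which, as already remarked, vanishes on an identity exactly when that identity holds on $\mathbb E^1$), and with
\[
\Sigma=\bigl((\Phi\cup\{\phi\})\cap\Id(\mathbb V)\bigr)\cup\Id(\mathbb E^1\vee\overline{\mathbb E^1}).
\]
The easy requirements are all in place: $\mathbb Q^1\subseteq\mathbb V$ since $\mathbb V$ lies above $\mathbb Q^1\vee\mathbb L_2^1\vee\mathbb R_2^1$, and $\mathbb Q^1\subseteq\mathbb E^1$ by Fact~\ref{F: E}; and $\mathbb V$ satisfies $\Sigma$ because $(\Phi\cup\{\phi\})\cap\Id(\mathbb V)\subseteq\Id(\mathbb V)$ trivially, while $\mathbb V\subseteq\mathbb E^1\vee\overline{\mathbb E^1}$ gives $\Id(\mathbb E^1\vee\overline{\mathbb E^1})\subseteq\Id(\mathbb V)$. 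Since $\Id(\mathbb E^1\vee\overline{\mathbb E^1})\subseteq\Sigma$, a derivation from $\Sigma$ is the same thing as a derivation within $\mathbb E^1\vee\overline{\mathbb E^1}$ from $(\Phi\cup\{\phi\})\cap\Id(\mathbb V)$, so the conclusion of Lemma~\ref{L: fb} is precisely the assertion of Fact~\ref{F: E23}. Thus everything reduces to verifying the one substantive hypothesis of Lemma~\ref{L: fb}.

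That hypothesis asks: given an identity ${\bf u}\approx{\bf v}$ of $\mathbb V$ that fails on $\mathbb E^1$, produce a word ${\bf u}_1$ with $\Sigma\vdash{\bf u}\approx{\bf u}_1$ such that $\Dist(\mathbb Q^1\rightarrow\mathbb E^1)({\bf u}_1\approx{\bf v})$ has strictly smaller cardinality than $\Dist(\mathbb Q^1\rightarrow\mathbb E^1)({\bf u}\approx{\bf v})$. First I would record what is available: since $\mathbb Q^1\subseteq\mathbb V$ and $\mathbb L_2^1\subseteq\mathbb V$, Facts~\ref{F: Q} and~\ref{F: L2}(i) give that ${\bf u}$ and ${\bf v}$ carry the same simple letters in the same order, have content-equal corresponding blocks, and satisfy $\ini({\bf u})=\ini({\bf v})$; and since ${\bf u}\approx{\bf v}$ fails on $\mathbb E^1$, Fact~\ref{F: E} furnishes corresponding blocks ${\bf a}$ of ${\bf u}$ and ${\bf b}$ of ${\bf v}$ with $\ini({\bf a})\ne\ini({\bf b})$; fix such a pair, choosing ${\bf a}$ as far to the left as possible. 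Inside this block one selects a disagreeing pair of letters $x,y$ that are adjacent among the first occurrences within ${\bf a}$, and modifies ${\bf u}$ so that the order of the first occurrences of $x$ and $y$ inside ${\bf a}$ is brought into agreement with ${\bf b}$; because these occurrences are adjacent, no other pair of letters changes its relative order inside ${\bf a}$, so the surgery deletes exactly one element from $\Dist(\mathbb Q^1\rightarrow\mathbb E^1)(\,\cdot\approx{\bf v})$ and creates none. The surgery itself is carried out by first normalising ${\bf a}$ and the prefix of ${\bf u}$ preceding ${\bf a}$ using identities of $\mathbb E^1\vee\overline{\mathbb E^1}$, and then applying an instance of $\phi$ when the two letters already occur together in an earlier block, or an instance of a member of $\Phi$ (indexed by the appropriate word of $Z(x,y)$) when their earlier occurrences are spread over distinct earlier blocks; Observation~\ref{O: same} is used to pass between the several equivalent shapes of these identities. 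Each such instance belongs to $(\Phi\cup\{\phi\})\cap\Id(\mathbb V)\subseteq\Sigma$, so the move is legitimate, and ${\bf u}_1$ is the word it produces.

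I expect the main obstacle to be exactly this local-surgery step: showing that \emph{every} configuration with $\ini({\bf a})\ne\ini({\bf b})$ can be resolved, by one application of $\phi$ or of a member of $\Phi$ modulo $\Id(\mathbb E^1\vee\overline{\mathbb E^1})$, into a genuine single adjacent transposition of first occurrences inside ${\bf a}$ leaving the other blocks untouched. The delicate part is the case analysis on the earlier occurrences of the two letters — whether each of them has occurred before the block ${\bf a}$ and how those earlier occurrences are distributed among the preceding blocks — together with the bookkeeping needed to confirm that the chosen move strictly decreases $\Dist(\mathbb Q^1\rightarrow\mathbb E^1)$ rather than merely relocating disagreements; the forms of $\phi$ and of the words in $Z(x,y)$ are calibrated precisely so that this analysis closes, and making that precise is where the real work of the proof lies.
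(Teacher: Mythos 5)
Your strategy coincides with the paper's: both proofs invoke Lemma~\ref{L: fb} with the distance function $\Dist(\mathbb Q^1 \rightarrow \mathbb E^1)$, reduce to an adjacent disagreeing pair of first occurrences inside a block $\mathbf a$, and split into two cases according to whether $x$ and $y$ co-occur in some block to the left of $\mathbf a$ (apply $\phi$) or not (apply a member of $\Phi$). Your packaging via $\Sigma=((\Phi\cup\{\phi\})\cap\Id(\mathbb V))\cup\Id(\mathbb E^1\vee\overline{\mathbb E^1})$ is a harmless reformulation of ``follows within $\mathbb E^1\vee\overline{\mathbb E^1}$ from $(\Phi\cup\{\phi\})\cap\Id(\mathbb V)$'', and the bookkeeping you do carry out (checking the easy hypotheses of Lemma~\ref{L: fb}, the adjacency reduction, the claim that an adjacent transposition removes exactly one element of $\Dist$) is correct.

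However, the step you defer as ``where the real work lies'' is the substance of the proof, and two specific points are missing. First, you assert without justification that the instance of $\phi$ (or of the relevant member of $\Phi$) used in the surgery lies in $(\Phi\cup\{\phi\})\cap\Id(\mathbb V)$. That $\phi\in\Id(\mathbb V)$ is not automatic: the paper obtains it by taking the given identity ${\bf u}\approx{\bf v}$ of $\mathbb V$ itself, multiplying both sides by $xy$ on the left and by $y$ on the right, erasing all multiple letters other than $x,y$ and the appropriate simple letters, and observing that the result is equivalent to $\phi$ modulo $\{xtx\approx xtx^2\approx x^2tx,\ x^2y^2\approx(xy)^2\}$ via Observation~\ref{O: same}; in Case~2 one likewise derives the needed member of $\Phi$ from ${\bf u}y\approx{\bf v}y$. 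Second, you do not explain how the chosen identity is actually applied to ${\bf u}$ so as to transpose $({_{1{\bf a}}x})$ and $({_{1{\bf a}}y})$ and nothing else: the paper first uses Corollary~\ref{C: adjacent-L} (via $\mathbb E^1\vee\overline{\mathbb E^1}\subset\mathbb E^1_3$) to make $\mathbf a$ begin with the first occurrences of its letters, uses Corollary~\ref{C: uwu} to force the earlier block $\mathbf q$ to contain $xy$ as a subword, and then applies $\phi$ (resp.\ $\mathbf r\approx\mathbf s\in\Phi$) under a substitution sending $x$ and $y$ to powers of $x$ and $y$ --- this is exactly what guarantees that the targeted pair is switched and $|\Dist(\mathbb Q^1\rightarrow\mathbb E^1)({\bf u}_1\approx{\bf v})|$ strictly decreases. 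Without these two verifications the proposal is a correct outline of the paper's argument rather than a proof.
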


\begin{proof} 
Take an arbitrary identity ${\bf u} \approx {\bf v}$ of $\mathbb V$ which fails in $\mathbb E^1$.
Lemma~\ref{F: L2} and the dual to Observation~\ref{O: rb} imply that $\Dist(\mathbb Q^1 \rightarrow  \mathbb E^1)({\bf u} \approx {\bf v})$ is not empty. 
This means that  for some block $\bf a$ of $\bf u$ and $x,y \in \con({\bf u})$ we have  $({_{1{\bf a}}x}) <_{\bf u} ({_{1{\bf a}}y})$ but $({_{1{\bf b}}y}) <_{\bf v} ({_{1{\bf b}}x})$,  where $\bf b$ is the block of $\bf v$ corresponding to $\bf a$. 
Since $\mathbb E^1 \vee \overline{\mathbb E^1}\subset \mathbb E^1_3$ by Proposition~\ref{P: EE}, Corollary~\ref{C: adjacent-L} allows us to assume that  $\bf a$ begins with the sequence of first occurrences of all letters in $\con({\bf a})$. 
We can further assume that $({_{1{\bf a}}x}) \ll_{\bf u}({_{1{\bf a}}y})$. 
Indeed, if the initially chosen pair $\{{_{1{\bf a}}x}, {_{1{\bf a}}y}\} \in \Dist(\mathbb Q^1 \rightarrow  \mathbb E^1)({\bf u} \approx {\bf v})$ is not adjacent in $\bf a$, then either $\{{_{1{\bf a}}x}, {_{1{\bf a}}z}\} \in \Dist(\mathbb Q^1 \rightarrow  \mathbb E^1)({\bf u} \approx {\bf v})$ or $\{{_{1{\bf a}}z}, {_{1{\bf a}}y}\} \in \Dist(\mathbb Q^1 \rightarrow  \mathbb E^1)({\bf u} \approx {\bf v})$ for some $z \in \con({\bf a})$ with  $({_{1{\bf a}}x}) <_{\bf u} ({_{1{\bf a}}z}) <_{\bf u} ({_{1{\bf a}}y})$. 
Iterating this argument we find a pair which is adjacent in $\bf a$.

Let $t$ denote the simple letter to the left of $\bf a$ in $\mathbf u$ (we may assume without any loss that such a letter exists). 
We multiply both sides of ${\bf u} \approx {\bf v}$ by $y$ on the right, erase all multiple letters in ${\bf u} \approx {\bf v}$ other than $x$ and $y$, erase all simple letters on the right of block $\bf a$ and some simple letters on the left of $\bf a$ depending on which of the two cases takes place.

\smallskip

{\bf Case 1}: some block $\bf q$ on the left of $\bf a$ contains both $x$ and $y$.

In this case, the identity $xy{\bf u}y \approx xy{\bf v}y$ implies some identity which is equivalent modulo $\{xtx \approx xtx^2 \approx x^2tx,\, x^2y^2 \approx (xy)^2\}$ to $\phi$ (see Observation~\ref{O: same}).

Since  $\bf q$ contains both $x$ and $y$,  Corollary~\ref{C: uwu} implies that using $\{xtx \approx xtx^2 \approx x^2tx,\, x^2y^2 \approx (xy)^2\}$ the block $\bf q$ can be modified to some block which contains $xy$ as a subword. 
So, we may assume that $\bf q$ contains $xy$ as a subword. 
Then, since $({_{1{\bf a}}x}) \ll_{\mathbf u} ({_{1{\bf a}}y})$, we can apply $\phi$ to ${\bf u}$ so that the image of $x$ is a power of $x$ and the image of $y$ is a power of $y$.
As a result, we obtain ${\bf u}_1$ such that $({_{1{\bf a}_1}y}) \ll_{\mathbf u_1} ({_{1{\bf a}_1}x})$, where ${\bf a}_1$ is the block of ${\bf u}_1$ corresponding to $\bf a$.

\smallskip

{\bf Case 2}: no block on the left of $\bf a$ contains both $x$ and $y$.

In this case, the identity ${\bf u}y \approx {\bf v}y$ implies some identity which is equivalent modulo $\{xtx \approx xtx^2 \approx x^2tx,\, x^2y^2 \approx (xy)^2\}$ to an identity ${\bf r} \approx {\bf s} \in \Phi$.

Since $({_{1{\bf a}}x}) \ll_{\mathbf u} ({_{1{\bf a}}y})$, we can apply 
${\bf r} \approx {\bf s}$ to ${\bf u}$ so that the image of $x$ is a power of $x$ and the image of $y$ is a power of $y$.
As a result, we obtain ${\bf u}_1$ such that $({_{1{\bf a}_1}y}) \ll_{\mathbf u_1} ({_{1{\bf a}_1}x})$, where ${\bf a}_1$ is the block of ${\bf u}_1$ corresponding to $\bf a$.

\smallskip

Since in every case we have $|\Dist(\mathbb Q^1 \rightarrow \mathbb E^1)({\bf u}_1 \approx {\bf v})| < |\Dist(\mathbb Q^1 \rightarrow \mathbb E^1)({\bf u} \approx {\bf v})|$, Lemma~\ref{L: fb} implies that  every identity of $\mathbb V$ can be derived from $(\Phi \cup \{\phi\}) \cap \Id(\mathbb V)$ and  an identity of $\mathbb V \vee \mathbb E^1$.
\end{proof}

The following theorem generalizes Proposition~5.11(i) in \cite{Jackson-Lee-18} which says that the variety $\mathbb E^1$ is HFB.

\begin{theorem} 
\label{T: hfbEE}
Every subvariety of  $\mathbb E^1 \vee \overline{\mathbb E^1}$ is FB.
\end{theorem}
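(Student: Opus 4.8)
The plan is to combine the reduction machinery of Lemma~\ref{L: fb} with Fact~\ref{F: E23} and a careful case analysis on where a subvariety $\mathbb W$ of $\mathbb E^1 \vee \overline{\mathbb E^1}$ sits relative to the ``small'' band varieties at the bottom of the lattice. First I would dispose of the degenerate cases: if $\mathbb W$ does not contain $\mathbb Q^1 \vee \mathbb L_2^1 \vee \mathbb R_2^1$, then by Proposition~\ref{P: struct of L(B)} and Fact~\ref{F: Ek} the variety $\mathbb W$ is contained in one of $\mathbb E^1_2 \vee \overline{\mathbb E^1_2}$'s proper subvarieties or misses $\mathbb Q^1$ or misses one of $\mathbb L_2^1$, $\mathbb R_2^1$; all of these are either commutative-by-bounded or covered by Lemma~\ref{L: not E} (which makes any variety satisfying $xtx \approx x^2tx \approx xtx^2$ and missing $\mathbb E^1\{\sigma_2\}$ and $\overline{\mathbb A^1}$ finitely based), or they are subvarieties of $\mathbb E^1$ or $\overline{\mathbb E^1}$, which are HFB by Proposition~5.11(i) of \cite{Jackson-Lee-18} and its dual. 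Actually the cleanest route is: a subvariety of $\mathbb E^1 \vee \overline{\mathbb E^1}$ that is not contained in $\mathbb E^1$ and not contained in $\overline{\mathbb E^1}$ must contain both $\mathbb L_2^1$ and $\mathbb R_2^1$; combined with the obvious reduction we may assume $\mathbb Q^1 \vee \mathbb L_2^1 \vee \mathbb R_2^1 \subseteq \mathbb W \subseteq \mathbb E^1 \vee \overline{\mathbb E^1}$.

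Next I would invoke Fact~\ref{F: E23}: every identity of such a $\mathbb W$ follows within $\mathbb E^1 \vee \overline{\mathbb E^1}$ from $(\Phi \cup \{\phi\}) \cap \Id(\mathbb W)$ together with a single identity of $\mathbb W \vee \mathbb E^1$. By the dual of Fact~\ref{F: E23} (reducing via $\Dist(\mathbb Q^1 \to \overline{\mathbb E^1})$, using $\overline{\Phi}$ and $\overline{\phi}$), every identity of $\mathbb W \vee \mathbb E^1$ — which still lies between $\mathbb Q^1 \vee \mathbb L_2^1 \vee \mathbb R_2^1$ and $\mathbb E^1 \vee \overline{\mathbb E^1}$ once we note $\mathbb W \vee \mathbb E^1$ contains $\mathbb E^1 \supseteq \mathbb L_2^1$ and $\mathbb R_2^1 \subseteq \mathbb W$ — follows from $(\overline{\Phi} \cup \{\overline{\phi}\}) \cap \Id(\mathbb W \vee \mathbb E^1)$ and an identity of $(\mathbb W \vee \mathbb E^1) \vee \overline{\mathbb E^1} = \mathbb E^1 \vee \overline{\mathbb E^1}$, which is a fixed finitely based variety by Lemma~\ref{L: xyxy}(i) combined with the defining identity of $\mathbb E^1_3 \wedge \overline{\mathbb E^1_3} = \mathbb E^1 \vee \overline{\mathbb E^1}$ from Example~\ref{E: EE}(i). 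So $\Id(\mathbb W)$ is generated modulo the finite basis of $\mathbb E^1 \vee \overline{\mathbb E^1}$ by the identities of $\mathbb W$ lying in the four families $\Phi$, $\overline{\Phi}$, $\{\phi\}$, $\{\overline{\phi}\}$.

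The crux, then, is a ``compactness'' argument showing that $\Id(\mathbb W) \cap (\Phi \cup \overline{\Phi} \cup \{\phi,\overline{\phi}\})$ is equivalent modulo $\Id(\mathbb E^1 \vee \overline{\mathbb E^1})$ to a finite subset. The members of $\Phi$ are indexed by words ${\bf c} \in Z(x,y)$, a set of ``alternating block'' words; the key structural point I would establish is that if $\mathbb W$ satisfies the instance of $\Phi$ for some ${\bf c} \in Z(x,y)$ of block-length $n$, then it satisfies the instances for all ${\bf c}'$ of block-length $\le n$, or more usefully that there is a \emph{single} shortest relevant ${\bf c}$ whose identity implies all the others within $\mathbb W$ (using that adding or deleting blocks of a repeated letter is governed by $x \approx x^2$-type manipulations controlled inside $\mathbb E^1 \vee \overline{\mathbb E^1}$, via Corollary~\ref{C: uwu} and Corollary~\ref{C: adjacent-L}). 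Concretely: $\mathbb W$ either satisfies $\phi$ or not, either satisfies $\overline{\phi}$ or not, and among the $\Phi$-identities the set of ${\bf c}$ for which $\mathbb W \models ({\bf c}tx^2y^2 \approx {\bf c}tyx^2y)$ is closed under a combinatorial ``simplification'' on $Z(x,y)$ that has finitely many minimal elements; picking a finite transversal of these minimal elements (and dually for $\overline{\Phi}$) yields a finite basis. The main obstacle I expect is precisely this last combinatorial claim — proving that each instance of $\Phi$ that holds in $\mathbb W$ is a consequence, modulo $\Id(\mathbb E^1 \vee \overline{\mathbb E^1})$ and the finitely many ``short'' instances, of those short instances; this requires a substitution/deletion argument on the block structure of words in $Z(x,y)$ and must be done for both $\Phi$ and $\overline{\Phi}$ simultaneously, keeping track of how a $\Phi$-reduction can create new $\overline{\mathbb E^1}$-defects. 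Once that is in hand, $\mathbb W$ has a finite basis consisting of the fixed basis of $\mathbb E^1 \vee \overline{\mathbb E^1}$ plus finitely many members of $\Phi \cup \overline{\Phi} \cup \{\phi, \overline{\phi}\}$, completing the proof.
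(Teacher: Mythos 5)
Your proposal follows essentially the same route as the paper: dispose of the degenerate cases where $\mathbb W$ misses $\mathbb Q^1$ (via Lemma~\ref{L: not E}) or one of $\mathbb L_2^1$, $\mathbb R_2^1$ (via Fact~\ref{F: Ek} and the HFB-ness of $\mathbb E^1$), then apply Fact~\ref{F: E23} and its dual to reduce $\Id(\mathbb W)$ to $(\Phi\cup\overline{\Phi}\cup\{\phi,\overline{\phi}\})\cap\Id(\mathbb W)$ modulo a basis of $\mathbb E^1\vee\overline{\mathbb E^1}$. The ``compactness'' step you single out as the main obstacle is exactly where the paper also stops, asserting without further argument that any subset of $\Phi\cup\overline{\Phi}\cup\{\phi,\overline{\phi}\}$ is equivalent to a finite subset; your sketched justification (the instances of $\Phi$ for shorter words of $Z(x,y)$ imply those for longer ones by substitution, so finitely many minimal instances suffice) is the intended, and correct, reason.
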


\begin{proof}  
Let $\mathbb V$ be a subvariety of  $\mathbb E^1 \vee \overline{\mathbb E^1}$.
If $\mathbb V$ does not contain $\mathbb Q^1$, then $\mathbb V$ contains neither  $\mathbb E^1\{\sigma_2\}$ nor $\overline{\mathbb A^1}$.
Then  $\mathbb V$ is FB by Lemma~\ref{L: not E}. 
Hence we may assume that $\mathbb V$ contains $\mathbb Q^1$.  
If either $\mathbb L_2^1$ or $\mathbb R_2^1$ is not contained in $\mathbb V$, then $\mathbb V$ is a subvariety of  $\mathbb E^1$ or $\overline{\mathbb E^1}$ by Fact~\ref{F: Ek} or the dual to it. 
Since $\mathbb E^1$ is HFB by Proposition~5.11(i) in~\cite{Jackson-Lee-18}, we may assume that $\mathbb V$ contains $\mathbb L_2^1 \vee \mathbb R_2^1$.

By Fact~\ref{F: E23}, every identity of $\mathbb V$  follows within $\mathbb E^1 \vee \overline{\mathbb E^1}$ from $(\Phi \cup \{\phi\}) \cap \Id(\mathbb V)$ and an identity of $\mathbb V \vee \mathbb E^1$.
The dual of Fact~\ref{F: E23} implies that every identity of  $\mathbb V \vee \mathbb E^1$ follows within $\mathbb E^1 \vee \overline{\mathbb E^1}$ from $(\overline{\Phi} \cup \{\overline{\phi}\}) \cap \Id(\mathbb V \vee \mathbb E^1) = (\overline{\Phi} \cup \{\overline{\phi}\}) \cap \Id(\mathbb V)$ and an identity of $\mathbb V \vee \mathbb E^1 \vee \overline{\mathbb E^1} = \mathbb E^1 \vee \overline{\mathbb E^1}$. 
Hence every identity of $\mathbb V$  follows within $\mathbb E^1 \vee \overline{\mathbb E^1}$ from $((\Phi \cup \{\phi\}) \cap \Id(\mathbb V)) \cup ((\overline{\Phi} \cup \{\overline{\phi}\}) \cap \Id(\mathbb V) )= (\Phi \cup \overline{\Phi} \cup \{ \phi, \overline{\phi}\}) \cap \Id(\mathbb V)$.
Clearly, any subset of $\Phi \cup \overline{\Phi} \cup \{ \phi, \overline{\phi}\}$ is equivalent to some its finite subset.
Therefore, $\mathbb V$ is FB.
\end{proof}

Since $\mathbb Q^1 \vee \mathbb L_2^1\vee \mathbb R_2^1$ satisfies $xy^2tx \approx xy^2x tx$ and $xt xy^2 \approx xt yxy$ by Facts~\ref{F: Q} and~\ref{F: L2}, the proof of Theorem~\ref{T: hfbEE} and Observation~\ref{O: same} give us the following.

\begin{ex} 
\label{E: QLR2} 
The variety
\[
\begin{aligned}
&\mathbb Q^1 \vee \mathbb L_2^1\vee \mathbb R_2^1=\\
&= \var \{ xtx \approx xtx^2 \approx x^2tx, (xy)^2 \approx x^2y^2, x^2y^2tx \approx xy^2x tx, xt x^2y^2 \approx xt yx^2y \}\\ 
&= \var \{ xtx \approx xtx^2 \approx x^2tx, (xy)^2 \approx x^2y^2, x^2y^2ty \approx xy^2x ty, yt x^2y^2 \approx yt yx^2y \}
\end{aligned}
\]
is HFB.\qed
\end{ex}

\section{New limit variety of monoids} 
\label{sec: new lim}

For a set of identities $\Sigma$ and $k>0$, we use $\Sigma_k$ to denote the set of all identities from $\Sigma$ which involve at most $k$ letters.

\begin{fact}[\mdseries{\cite[Fact~2.1]{Sapir-15N}; see also~\cite[Section~4.2]{Volkov-01}}]
\label{F: nfb} 
Let $\mathbb V = \var\,\Sigma$ be a semigroup variety and $\Sigma$ a set of identities.
Suppose that for infinitely many $n$, the variety $\mathbb V$  satisfies an identity ${\bf U}_n \approx {\bf V}_n$ in at least $n$ letters such that  ${\bf U}_n$ has some Property~\textup{(P${}_n$)} but  ${\bf V}_n$ does not. 
Suppose that for every word $\bf U$ such that $\mathbb V$ satisfies ${\bf U} \approx {\bf U}_n$ and ${\bf U}$  has Property~\textup{(P${}_n$)}, for every substitution $\Theta\colon\mathfrak A \rightarrow \mathfrak A^+$ and every identity ${\bf u} \approx {\bf v} \in \Sigma_k$ such that $\Theta({\bf u}) = {\bf U}$ and $k <n/2$, the word $\Theta({\bf v})$ also has property~\textup{(P${}_n$)}.   
Then $\mathbb V$ is NFB.\qed
\end{fact}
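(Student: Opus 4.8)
The plan is a proof by contradiction, following the classical ``syntactic'' scheme for non-finite-basis results (originating with Perkins and developed by Sapir and Volkov). Assume $\mathbb V$ is FB. The first step is to replace the given, possibly infinite, basis by a bounded one drawn from $\Sigma$: since $\mathbb V=\var\,\Sigma$ is finitely based, each member of a finite basis of $\mathbb V$ holds in $\var\,\Sigma$ and hence has a \emph{finite} deduction from $\Sigma$; gathering the finitely many members of $\Sigma$ that occur in these finitely many deductions produces a finite $\Sigma''\subseteq\Sigma$ with $\var\,\Sigma''\subseteq\mathbb V\subseteq\var\,\Sigma''$, so $\mathbb V=\var\,\Sigma''$. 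If $k_0$ bounds the number of letters in the identities of $\Sigma''$, then, putting $k:=k_0+2$, the variety $\mathbb V$ is still generated by the family of its identities in at most $k$ letters, and that family is closed under adjoining one or two fresh letters at the two ends of a $\le k_0$-letter identity. Now pick an index $n$ from the infinite supply with $2k<n$, so that $k<n/2$, and fix the identity $\mathbf U_n\approx\mathbf V_n$ in at least $n$ letters for which $\mathbf U_n$ has Property~(P${}_n$) and $\mathbf V_n$ does not.

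Since $\mathbf U_n\approx\mathbf V_n$ holds in $\mathbb V$, there is a deduction $\mathbf U_n=\mathbf W_0\approx\mathbf W_1\approx\dots\approx\mathbf W_m=\mathbf V_n$ in which each step $\mathbf W_i\approx\mathbf W_{i+1}$ replaces one occurrence of a subword $\Theta_i(\mathbf u_i)$ of $\mathbf W_i$ by $\Theta_i(\mathbf v_i)$, where $(\mathbf u_i\approx\mathbf v_i)$ is an identity of $\mathbb V$ in at most $k_0$ letters (applied in one of its two directions) and $\Theta_i\colon\mathfrak A\to\mathfrak A^+$. By absorbing the prefix and suffix flanking the replaced subword into the substitution — that is, by replacing $\mathbf u_i\approx\mathbf v_i$ by $z_0\mathbf u_i z_1\approx z_0\mathbf v_i z_1$ with new letters $z_0,z_1$, an identity of $\mathbb V$ in at most $k<n/2$ letters, and extending $\Theta_i$ accordingly — each step may be taken to be applied at top level, i.e.\ $\mathbf W_i=\Theta_i'(\mathbf p_i)$ and $\mathbf W_{i+1}=\Theta_i'(\mathbf q_i)$ for a short identity $\mathbf p_i\approx\mathbf q_i$ of $\mathbb V$ and a substitution $\Theta_i'\colon\mathfrak A\to\mathfrak A^+$. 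I would then prove by induction along the deduction that every $\mathbf W_i$ has Property~(P${}_n$). The base case is $\mathbf W_0=\mathbf U_n$. For the inductive step, $\mathbb V$ satisfies $\mathbf W_i\approx\mathbf U_n$ (every $\mathbf W_j$ is deductively, hence in $\mathbb V$, equal to $\mathbf U_n$, which holds in $\mathbb V$), the word $\mathbf W_i$ has Property~(P${}_n$) by induction, and $\mathbf W_{i+1}$ arises from $\mathbf W_i$ exactly as in the second hypothesis of this Fact with $\mathbf U:=\mathbf W_i$; hence $\mathbf W_{i+1}$ has Property~(P${}_n$).

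Taking $i=m$ now shows that $\mathbf V_n=\mathbf W_m$ has Property~(P${}_n$), contradicting its choice; therefore no finite basis exists and $\mathbb V$ is NFB. I expect the one genuinely delicate point to be the bookkeeping in the reduction to top-level deduction steps: one must check that absorbing the two flanking words into the substitution keeps the number of letters in the applied identity strictly below $n/2$ — which is precisely the slack that the condition $k<n/2$ supplies — and that the working family of short identities of $\mathbb V$ is closed under this flanking, so that the second hypothesis is genuinely applicable at each step. The remaining ingredients — the description of finitely based varieties via bounded-letter bases, the finiteness of an individual deduction, and the propagation of a word property along a deduction — are routine.
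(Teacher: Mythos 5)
The paper does not prove this statement at all --- it is quoted verbatim from \cite[Fact~2.1]{Sapir-15N} (see also \cite[Section~4.2]{Volkov-01}) --- so there is nothing internal to compare against; your reconstruction is the standard Perkins--Sapir--Volkov argument and it is essentially correct. Your reduction to a finite $\Sigma''\subseteq\Sigma$ with $\var\Sigma''=\mathbb V$, the choice of $n>2(k_0+2)$, and the induction propagating Property~(P${}_n$) along a $\Sigma''$-deduction from $\mathbf U_n$ to $\mathbf V_n$ are all sound. The one point you rightly flag as delicate is the only real one: converting a deduction step applied to a proper subword into a top-level application requires flanking $\mathbf u\approx\mathbf v$ by one or two fresh letters, and the flanked identity $z_0\mathbf u z_1\approx z_0\mathbf v z_1$ need not belong to $\Sigma$ itself, only to $\Id(\mathbb V)$. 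So as literally stated, with $\Sigma_k$ ranging over identities \emph{from $\Sigma$}, your induction step needs $\Sigma$ to be closed under this flanking; the hypothesis is really meant to be (and in this paper is only ever verified as) a statement about all identities of $\mathbb V$ in at most $k$ letters --- note that Sufficient Condition~\ref{SC2} checks the condition for an arbitrary identity $\mathbf u\approx\mathbf v$ of $\mathbb V$ in fewer than $n/2$ letters, i.e.\ effectively takes $\Sigma=\Id(\mathbb V)$. With that reading (which your proof silently adopts when you pass from $\Sigma''$ to ``the family of its identities in at most $k$ letters''), the argument is complete; your handling of empty flanks, forced by substitutions landing in $\mathfrak A^+$, is also correct.
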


If  $\mathbf U=\Theta(\mathbf u)$ for some  substitution $\Theta\colon\mathfrak A \rightarrow \mathfrak A^+$ and $_{i{\bf U}}x$ is an occurrence of a letter $x$ in $\bf U$, then  $\Theta^{-1}_{\bf u}({_{i{\bf U}}x})$ denotes  an occurrence  ${_{j{\bf u}}z}$ of a letter $z$ in  $\bf u$ such that $\Theta({_{j{\bf U}}z})$ regarded as a subword of $\bf U$ contains $_{i{\bf U}}x$.

Recall that $\beta$ is the fully invariant congruence of $\mathbb E^1$.
Let $\overline{\beta}$ denote the congruence  dual to $\beta$.

\begin{sufcon} 
\label{SC2}
Let $\mathbb V$ be a monoid variety that  satisfies the identity
\begin{equation} 
\label{long identity2} 
{\bf U}_n = x y_1^2y_2^2\cdots y^2_{n-1} y_n^2x\approx x y_1^2 x y_2^2 \cdots y^2_{n-1} x y^2_n x = {\bf V}_n
\end{equation}
for any $n\ge1$.  
If the sets $[ab^2ta]_{\overline{\beta}}$,  $[at b^2 a]_{\beta}$ and $a a^+ b b^+$  are stable with respect to $\mathbb V$, then $\mathbb V$ is NFB.
\end{sufcon}

\begin{proof}
Consider the following property of a word ${\bf U}$ with $\con({\bf U})  = \{x, y_1, \dots,y_n\}$:
\begin{itemize}
\item[\textup{(P)}] there is no  $x$  in $\bf U$  between  the last occurrence of $y_1$ and the first occurrence  of $y_n$.
\end{itemize}
Notice that ${\mathbf U}_n$ satisfies Property~(P) but  ${\bf V}_n$ does not.

Let $\bf U$ be such that $\mathbb V$ satisfies ${\bf U}_n \approx {\bf U}$. 
Since $aa^+bb^+$ is stable with respect to $\mathbb V$,  we have:
\begin{equation}
\label{letters in u1}
({_{1{\bf U}}x})   <_{\bf U} ({_{\ell{\bf U}}y_1})  <_{\bf u} ({_{1{\bf U}}y_2}) <_{\bf U} ({_{\ell{\bf U}}y_2}) <_{\bf U} \dots <_{\bf U} ({_{\ell{\bf U}}y_{n-1}}) <_{\bf U} ({_{1{\bf U}}y_n}) <_{\bf U} ({_{\ell{\bf U}}x}).
\end{equation}

Let ${\bf u} \approx {\bf v}$ be an identity of $\mathbb V$ in less than $n/2$ letters and let
 $\Theta: \mathfrak A \rightarrow \mathfrak A^+$  be a substitution such that $\Theta({\bf u}) = {\bf U}$.
In view of \eqref{letters in u1}, the following holds:
\begin{itemize}
\item[\textup{($\ast$)}] for $t\in\con(\mathbf u)$, if $\Theta(t)$ contains both $y_i$ and $y_j$ for some $1\le i <j \le n$, then the letter $t$ is simple in $\bf u$.
\end{itemize}

Suppose that $\bf U$ has Property (P).  
Let us verify that ${\bf V} = \Theta({\bf v})$ also has Property~(P).
To obtain a contradiction, assume that there is  an occurrence of $x$ in $\bf V$ between ${_{\ell{\bf V}}y_1}$ and ${_{1{\bf V}}y_n}$.
By symmetry, we may assume without any loss that there is an occurrence ${_{k{\bf V}}x}$ of $x$ in $\bf V$ such that $({_{1{\bf V}}y_{n/2}}) <_{\bf V} ({_{k{\bf V}}x}) <_{\bf V} ({_{1{\bf V}}y_{n}})$.

Since $\bf u$ has less than $n/2$ letters, for some $t \in \con({\bf u})$ the word $\Theta(t)$ contains both $y_i$ and $y_j$ for some $1 \le i <j \le n/2$. 
In view of ($\ast$), the letter $t$ is simple in $\bf u$.
By Fact~\ref{F: abt}(i), $t$ is an isoterm for $\mathbb V$. 
Hence  the letter $t$ is simple in $\bf v$ as well.
Clearly, $\Theta^{-1}_{\bf v}({_{k{\bf V}}x}) = {_{p{\bf v}}z}$ is an occurrence of some letter $z$ in $\bf v$ such that $\Theta(z)$ contains $x$.
Since the empty word $1$ is an isoterm for $\mathbb V$ by Fact~\ref{F: abt}(i), the letter $z$ occurs in $\bf u$ as well.

In view of Fact~2.6 in \cite{Sapir-15N}, $\Theta^{-1}_{\bf u}({_{1{\bf U}}y_n}) = {_{1{\bf u}}y}$ and
 $\Theta^{-1}_{\bf v}({_{1{\bf V}}y_n}) = {_{1{\bf v}}y^\prime}$  for some $y,y^\prime \in \con({\bf u})=\con({\bf v})$.
If $y \ne y^\prime$ then $(_{1{\bf u}} y) <_{\bf u} {(_{1{\bf u}}y^\prime)}$ but $(_{1{\bf v}} y^\prime) <_{\bf v} {(_{1{\bf v}}y)}$.
This is impossible, because ${\bf u}(y,y^\prime)$ is $\beta$-term for $\mathbb V$ by Fact~\ref{F: abt}(i).
Thus $y=y^\prime$.

Further, in view of~\eqref{letters in u1} and the fact that $a a^+ b b^+$  is stable with respect to $\mathbb V$, we have:
\[
({_{1{\bf V}}x})   <_{\bf V} ({_{\ell{\bf V}}y_1})  <_{\bf u} ({_{1{\bf V}}y_2}) <_{\bf V} ({_{\ell{\bf V}}y_2}) <_{\bf V} \dots <_{\bf V} ({_{\ell{\bf V}}y_{n-1}}) <_{\bf V} ({_{1{\bf V}}y_n}) <_{\bf V} ({_{\ell{\bf V}}x}).
\]
Hence if $\Theta(z)$ is not a power of $x$, then $\Theta(z)$ contains either $xy_i$ for some $n/2\le i \le n$ or $y_jx$ for some $n/2 \le j <n$. 
This is impossible, because $\Theta(z)$ is a subword of $\bf U$ and $\bf U$ has Property (P) and \eqref{letters in u1}.
Therefore, $\Theta(z)$ is a power of $x$.
Then $z\ne y$ and $z\ne t$.
Since $({_{1{\bf V}}y_{n/2}}) <_{\bf V} ({_{k{\bf V}}x}) <_{\bf V} ({_{1{\bf V}}y_{n}})$, we have  $({_{{\bf v}}t}) <_{\bf v} ({_{p{\bf v}}z}) <_{\bf v} ({_{1{\bf v}}y})$.
In particular, $y\ne t$.

Since $\bf U$ has Property (P), no $z$ occurs between $t$ and ${_{1{\bf u}}y}$ in $\bf u$. 
Hence  ${\bf u}(z,y,t) \in  z^{\ast}t y \{y, z\}^{\ast}$.  
On the other hand, ${\bf v}(z, y, t) \in  z^{\ast}t z \{y, z\}^{\ast}$.   
This is impossible, because ${\bf u}(z,y,t)$ is $\beta$-term for $\mathbb V$ by  Fact~\ref{F: abt}(ii).
We conclude that $\bf V$ must also satisfy Property~(P). 
Therefore,  $\mathbb V$ is NFB by  Fact~\ref{F: nfb}.
\end{proof}

\begin{cor} 
\label{C: EE} 
Every monoid variety $\mathbb V$ that contains $\mathbb A^1_0 \vee {\mathbb E^1}\{\sigma_2\}  \vee \overline{{\mathbb E^1}\{\sigma_2\}}$ and is contained in $\mathbb A^1_0 \vee {\mathbb E^1} \vee \overline{{\mathbb E^1}}$ is NFB.
\end{cor}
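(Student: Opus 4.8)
The plan is to apply Sufficient Condition~\ref{SC2} to $\mathbb V$. This amounts to checking two things: (a) that $\mathbb V$ satisfies the identity $\mathbf U_n\approx\mathbf V_n$ of~\eqref{long identity2} for every $n\ge1$, and (b) that the sets $[ab^2ta]_{\overline\beta}$, $[atb^2a]_\beta$ and $aa^+bb^+$ are stable with respect to $\mathbb V$. Here~(a) will be extracted from the upper bound $\mathbb V\subseteq\mathbb A_0^1\vee\mathbb E^1\vee\overline{\mathbb E^1}$ and~(b) from the lower bound $\mathbb A_0^1\vee\mathbb E^1\{\sigma_2\}\vee\overline{\mathbb E^1\{\sigma_2\}}\subseteq\mathbb V$; once both hold, Sufficient Condition~\ref{SC2} at once gives that $\mathbb V$ is NFB.

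For~(a) I would verify $\mathbf U_n\approx\mathbf V_n$ in each of $\mathbb A_0^1$, $\mathbb E^1$ and $\overline{\mathbb E^1}$ separately. Neither $\mathbf U_n$ nor $\mathbf V_n$ has a simple letter, so each is a single block and $\con(\mathbf U_n)=\con(\mathbf V_n)$; hence the identity holds in $\mathbb Q^1$ by Fact~\ref{F: Q}. Since $\ini(\mathbf U_n)=\ini(\mathbf V_n)=xy_1y_2\cdots y_n$, the (unique) blocks of $\mathbf U_n$ and $\mathbf V_n$ are equivalent modulo $\mathbb L_2^1$ by Fact~\ref{F: L2}(i), so $\mathbf U_n\approx\mathbf V_n$ holds in $\mathbb E^1$ by Fact~\ref{F: E}; dually, $\fin(\mathbf U_n)=\fin(\mathbf V_n)=y_1y_2\cdots y_nx$ gives $\mathbf U_n\approx\mathbf V_n$ in $\overline{\mathbb E^1}$. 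Finally, $\mathbf U_n\approx\mathbf V_n$ holds in $\mathbb A_0^1$ by a direct computation on the five-element monoid $A_0^1$: since $ab=0$ and $(ba)^2=0$ there, both words evaluate to the same element under every substitution of $x,y_1,\dots,y_n$ by elements of $A_0^1$, this common value being $0$ outside a handful of trivial assignments.

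For~(b) I would use Fact~\ref{F: E2} and its dual, via the lower bound. The inclusion $\mathbb A_0^1\subseteq\mathbb V$ makes $aa^+bb^+$ stable with respect to $\mathbb V$ by Fact~\ref{F: E2}(i); the inclusion $\mathbb E^1\{\sigma_2\}\subseteq\mathbb V$ makes $[atb^2a]_\beta$ stable by Fact~\ref{F: E2}(ii); and, since $ab^2ta$ is the mirror image of $atb^2a$ and $\overline\beta$ is the congruence dual to $\beta$, the dual of Fact~\ref{F: E2}(ii) applied to $\overline{\mathbb E^1\{\sigma_2\}}\subseteq\mathbb V$ makes $[ab^2ta]_{\overline\beta}$ stable. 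With~(a) and~(b) both in hand, Sufficient Condition~\ref{SC2} finishes the proof.

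I expect the proof to be short, with no genuine obstacle: essentially everything is an immediate consequence of the facts already established together with left--right duality. The one place that requires a small computation rather than a citation is the verification that $\mathbf U_n\approx\mathbf V_n$ holds in $\mathbb A_0^1$, and that computation is routine once one notes that $ab$ and $(ba)^2$ vanish in $A_0^1$.
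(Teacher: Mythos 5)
Your proposal is correct and follows essentially the same route as the paper: verify the hypotheses of Sufficient Condition~\ref{SC2}, getting the identities $\mathbf U_n\approx\mathbf V_n$ from the upper bound via Facts~\ref{F: Q}, \ref{F: L2}, \ref{F: E} and its dual, and the three stability conditions from the lower bound via Fact~\ref{F: E2} and its dual. The only difference is that for $\mathbb A_0^1$ the paper simply cites Proposition~4.2 of \cite{Sapir-15} (equivalently, one can iterate the identity $xszx\approx xsxzx$ of $A_0^1$ to insert the extra occurrences of $x$), whereas you propose a direct evaluation in the five-element monoid, which also works.
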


\begin{proof} 
Facts~\ref{F: L2} and~\ref{F: E} and the dual to Fact~\ref{F: E} imply that  $\mathbb E^1 \vee \overline{{\mathbb E^1}}$ satisfies $\mathbf U_n\approx \mathbf V_n$ for each $n\ge 1$.
The variety $\mathbb A_0^1$ also satisfies $\mathbf U_n\approx \mathbf V_n$ by Proposition~4.2 in~\cite{Sapir-15}.

Since $\mathbb V$ contains $\overline{{\mathbb E^1}\{\sigma_2\}}$, ${\mathbb E^1}\{\sigma_2\}$
and $\mathbb A^1_0$, the sets $[ab^2ta]_{\overline{\beta}}$,  $[at b^2 a]_{\beta}$ and $a a^+ b b^+$  are stable with respect to $\mathbb V$ by Fact~\ref{F: E2} and the dual of  Fact~\ref{F: E2}(ii).
Hence $\mathbb V$ is NFB by the Sufficient Condition~\ref{SC2}.
\end{proof}

\begin{fact} 
\label{F: noA01} 
Let $\mathbb V$ be a variety satisfying $xtx \approx xtx^2 \approx x^2tx$. 
If $\mathbb V$ does not contain $\mathbb A_0^1$, then $\mathbb V$ satisfies $x^2y^2 \approx (xy)^2$.
\end{fact}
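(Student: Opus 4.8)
Here is how I would prove the final statement (Fact~\ref{F: noA01}).

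\smallskip

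\emph{Overall strategy.} I would argue the contrapositive: assume $\mathbb{V}\models xtx\approx x^2tx\approx xtx^2$ but $\mathbb{V}\not\models x^2y^2\approx(xy)^2$, and deduce $\mathbb{A}_0^1\subseteq\mathbb{V}$. By Fact~\ref{F: E2}(i) it is enough to show that $aa^+bb^+$ is stable with respect to $\mathbb{V}$, so let $\mathbf{u}\approx\mathbf{v}$ be an identity of $\mathbb{V}$ with $\mathbf{u}\in aa^+bb^+$; I must check $\mathbf{v}\in aa^+bb^+$. Putting $t=1$ in $xtx\approx xtx^2$ gives $\mathbb{V}\models x^2\approx x^3$, hence $\mathbf{u}\approx_{\mathbb{V}}a^2b^2$ and we may take $\mathbf{u}=a^2b^2$.

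\smallskip

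\emph{Degenerate cases.} I would dispose of these by substitution. If $\mathbf{v}$ contains a letter outside $\{a,b\}$ or omits one of $a,b$, then substituting all other letters by $1$ turns $a^2b^2\approx\mathbf{v}$ into an identity of the form $1\approx c^k$ $(k\ge1)$ or $a^2\approx1$, which together with $x^2\approx x^3$ forces $\mathbb{V}$ to be trivial, contradicting $\mathbb{V}\not\models x^2y^2\approx(xy)^2$; so $\con(\mathbf{v})=\{a,b\}$. If $a$ or $b$ is simple in $\mathbf{v}$, substituting the other letter by $1$ gives $\mathbb{V}\models x^2\approx x$, i.e.\ $\mathbb{V}\subseteq\mathbb{B}^{1}$; but then, applying the instances $x\approx x^2$, both $(xy)^2\approx xy$ and $x^2y^2\approx xy$ hold, so $\mathbb{V}\models x^2y^2\approx(xy)^2$, again a contradiction. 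So $a$ and $b$ are both multiple in $\mathbf{v}$. If now $\mathbf{v}\notin aa^+bb^+=\{a^ib^j\mid i,j\ge2\}$, then $\mathbf{v}$ is not of the form $a^ib^j$, and taking the leftmost occurrence of $a$ preceded by some $b$ shows that $\mathbf{v}$ has $ba$ as a factor. Everything therefore reduces to the claim: if $\mathbb{V}\models xtx\approx x^2tx\approx xtx^2$ and $\mathbb{V}\models a^2b^2\approx\mathbf{v}$ with $\con(\mathbf{v})=\{a,b\}$, both letters multiple, and $ba$ a factor of $\mathbf{v}$, then $\mathbb{V}\models x^2y^2\approx(xy)^2$ -- which contradicts the standing assumption and hence forces $\mathbf{v}\in aa^+bb^+$.

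\smallskip

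\emph{The combinatorial core.} Using $\{xtx\approx x^2tx\approx xtx^2\}$ one may ``thicken'' every run of $\mathbf{v}$ to length two (a length-one run of a multiple letter is lengthened via $xtx\approx x^2tx$ or $xtx\approx xtx^2$, and $x^2\approx x^3$ takes care of longer runs); thus $\mathbb{V}\models a^2b^2\approx Z$, where $Z$ is an alternating product of the blocks $a^2$ and $b^2$ containing $b^2a^2$ as a factor. Since $x^2y^2\approx(xy)^2$ is literally $a^2b^2\approx abab$, I would now derive this from $\{xtx\approx x^2tx\approx xtx^2,\ a^2b^2\approx Z\}$ by thickening $abab$ until it exhibits a factor to which $a^2b^2\approx Z$ can be applied in the shortening direction. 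For instance, when $Z=b^2a^2$,
\[
abab\ \approx\ ab^2ab\ \approx\ ab^2a^2b\ \approx\ a\,(a^2b^2)\,b\ =\ a^3b^3\ \approx\ a^2b^2 ,
\]
the middle step replacing the factor $b^2a^2$ by $a^2b^2$; and for $Z=a^2b^2a^2$ one thickens $abab$ to $a^2b^2a^2b$ and contracts the prefix $a^2b^2a^2$ to $a^2b^2$, obtaining $a^2b^3\approx a^2b^2$.

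\smallskip

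\emph{Expected main obstacle.} The delicate point is making this last derivation uniform in the shape of $Z$. Thickening $abab$ only produces words whose de-stuttering is again $abab$, so when $Z$ has more than four blocks one must first shrink it; I expect that an auxiliary application of $a^2b^2\approx Z$ to an $a^2b^2$-subfactor of $Z$, combined with $x^2\approx x^3$, lowers the number of blocks, allowing an induction down to the two-block case $Z=b^2a^2$ handled above. Carefully verifying this block-reduction, together with the preliminary passage from $\mathbf{v}$ to the alternating product $Z$, is the technical heart of the argument; the rest (Fact~\ref{F: E2}(i) and the degenerate cases) is routine.
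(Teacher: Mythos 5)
Your overall route is the same as the paper's: non-containment of $\mathbb A_0^1$ is converted via Fact~\ref{F: E2}(i) into an identity $a^2b^2\approx\mathbf v$ in which $ba$ occurs as a factor, and the whole content of the statement is the derivation of $x^2y^2\approx(xy)^2$ from this identity together with $xtx\approx x^2tx\approx xtx^2$ (the paper compresses that derivation into one sentence). Your disposal of the degenerate cases and your computations for $Z=b^2a^2$ and $Z=a^2b^2a^2$ are correct.

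The gap is in your proposed completion of the general case, and the repair you sketch would not work as stated. First, ``an auxiliary application of $a^2b^2\approx Z$ to an $a^2b^2$-subfactor of $Z$'' only lengthens words: replacing a factor $a^2b^2$ of $Z$ by $Z$ increases the number of blocks, while the reverse replacement needs $Z$ itself to occur as a factor and just returns you to $a^2b^2$. Second, an induction ``down to the two-block case $Z=b^2a^2$'' is impossible in general: for example, $\overline{\mathbb E^1}$ satisfies $xtx\approx x^2tx\approx xtx^2$ and $a^2b^2\approx b^2a^2b^2$ but not $a^2b^2\approx b^2a^2$, so the two-block identity simply does not follow. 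What does work is a three-way split on the shape of $Z$. If $Z$ ends with the block $a^2$, then $Za^2\approx Z$ modulo $x^2\approx x^3$, so $a^2b^2a^2\approx Za^2\approx Z\approx a^2b^2$, and appending $b^2$ gives $(ab)^2\approx a^2b^2a^2b^2\approx a^2b^4\approx a^2b^2$. If $Z$ begins with the block $b^2$, dually $b^2a^2b^2\approx b^2Z\approx Z\approx a^2b^2$, and prepending $a^2$ finishes. In the remaining case $Z$ begins with $a^2$ and ends with $b^2$, so $Z=(a^2b^2)^k$ with $k\ge2$; here no letter-aligned factor replacement lowers the block count, and the essential observation you are missing is that the instance of $xtx\approx x^2tx$ with $t\mapsto1$ and $x\mapsto a^2b^2$ yields $(a^2b^2)^2\approx(a^2b^2)^3\approx\cdots\approx(a^2b^2)^k$, whence $a^2b^2\approx(a^2b^2)^k\approx(a^2b^2)^2\approx(ab)^2$. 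With this case split in place of your block-reduction induction, your argument closes.
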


\begin{proof}  
Since  $\mathbb V$ does not contain $\mathbb A_0^1$, Fact~\ref{F: E2}(i) implies that $\mathbb V$ satisfies an identity $x^2y^2 \approx {\bf u}$ such that $\bf u$ contains $yx$ as a subword. Together with $xtx \approx xtx^2 \approx x^2tx$ this implies $x^2y^2 \approx (xy)^2$.
\end{proof}

\begin{theorem} 
\label{T: new limit}
The variety $\mathbb A^1_0 \vee \mathbb E^1\{\sigma_2\} \vee \overline{{\mathbb E^1}\{\sigma_2\}}$ is limit and is different from all the previously found limit varieties.
\end{theorem}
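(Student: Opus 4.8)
The plan is to establish the two halves of the statement separately: (1) the variety $\mathbb M := \mathbb A^1_0 \vee \mathbb E^1\{\sigma_2\} \vee \overline{{\mathbb E^1}\{\sigma_2\}}$ is non-finitely based, and every proper subvariety is finitely based; (2) $\mathbb M$ is distinct from each limit variety known before. For (1), NFB-ness of $\mathbb M$ itself follows immediately from Corollary~\ref{C: EE} (take $\mathbb V = \mathbb M$; it clearly contains the lower bound and is contained in $\mathbb A^1_0 \vee \mathbb E^1 \vee \overline{\mathbb E^1}$). So the real content is that every proper subvariety $\mathbb W \subsetneq \mathbb M$ is FB. Here I would split into cases according to which of the ``building blocks'' $\mathbb A^1_0$, $\mathbb E^1\{\sigma_2\}$, $\overline{\mathbb E^1\{\sigma_2\}}$ fail to lie in $\mathbb W$. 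If $\mathbb A^1_0 \not\subseteq \mathbb W$, then by Fact~\ref{F: noA01} (noting $\mathbb M$ satisfies $xtx\approx xtx^2\approx x^2tx$) the variety $\mathbb W$ satisfies $x^2y^2\approx(xy)^2$, hence $\mathbb W \subseteq \mathbb E^1 \vee \overline{\mathbb E^1}$ by Lemma~\ref{L: xyxy}(i) — wait, more carefully: $\mathbb W$ satisfies both $xtx\approx xtx^2\approx x^2tx$ and $x^2y^2\approx(xy)^2$, and lies inside $\mathbb M$, so $\mathbb W\subseteq \mathbb M \wedge (\mathbb Q^1\vee\mathbb B^1)$; one checks this meet is contained in $\mathbb E^1\vee\overline{\mathbb E^1}$, and then $\mathbb W$ is FB by Theorem~\ref{T: hfbEE}. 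If instead $\mathbb E^1\{\sigma_2\}\not\subseteq \mathbb W$ (the case $\overline{\mathbb E^1\{\sigma_2\}}\not\subseteq\mathbb W$ is dual), then $\mathbb W$ satisfies $xtx\approx xtx^2\approx x^2tx$ and contains neither $\mathbb E^1\{\sigma_2\}$ nor $\overline{\mathbb A^1}$ (the latter because $\overline{\mathbb A^1}$ is not a subvariety of $\mathbb M$ — this needs a short verification, e.g.\ via identities $\mathbb M$ satisfies that fail in $\overline{\mathbb A^1}$), so $\mathbb W$ is FB by Lemma~\ref{L: not E}.

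For the sublattice structure one still needs to know that $\mathbb A^1_0$, $\mathbb E^1\{\sigma_2\}$, $\overline{\mathbb E^1\{\sigma_2\}}$ are pairwise independent in the subvariety lattice of $\mathbb M$, so that the three cases above genuinely exhaust all proper subvarieties: any $\mathbb W\subsetneq\mathbb M$ must omit at least one of them. This requires verifying that no one of these three is contained in the join of the other two; equivalently, for each of the three, exhibiting an identity holding in the join of the other two but failing in it. For $\mathbb A^1_0$ one uses Fact~\ref{F: E2}(i): $aa^+bb^+$ is stable with respect to $\mathbb E^1\{\sigma_2\}\vee\overline{\mathbb E^1\{\sigma_2\}}$ but not with respect to $\mathbb A^1_0$; for $\mathbb E^1\{\sigma_2\}$ one uses Fact~\ref{F: E2}(ii) with the $\beta$-class $[atb^2a]_\beta$, which is not stable under $\mathbb A^1_0\vee\overline{\mathbb E^1\{\sigma_2\}}$ (the $\overline{\mathbb E^1\{\sigma_2\}}$ part only controls the $\overline\beta$-side, and $\mathbb A^1_0$ doesn't see the relevant identity); dually for $\overline{\mathbb E^1\{\sigma_2\}}$. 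I would organize this as a short lemma before the theorem or fold it into the proof.

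For part (2), to separate $\mathbb M$ from every earlier limit variety I would argue by its ``coordinates'' relative to the landscape described in Section~\ref{sec: known}. Since $\mathbb M$ contains $\mathbb A^1_0$, it has non-central, non-commuting idempotents, so it is none of Jackson's $\mathbb M(\{xzxyty\})$, $\mathbb M(\{xyzxty,xzytxy\})$, nor $\mathbb J$, $\overline{\mathbb J}$, nor $\mathbb K^1,\overline{\mathbb K^1}$ (all of which have commuting or central idempotents, or are $J$-trivial), and it is not $\mathbb A^1\vee\overline{\mathbb A^1}$ because $\mathbb A^1_0$ is not a subvariety of $\mathbb A^1\vee\overline{\mathbb A^1}$ (check an identity), nor is it $\mathbb A^1\vee\mathbb E^1\{\sigma_2\}$ or its dual from \cite{Sapir-23}, since those contain $\mathbb A^1$ rather than $\mathbb A^1_0$, and $\mathbb A^1\not\subseteq\mathbb M$ (again by an identity, e.g.\ $\mathbb M$ satisfies $x^2y^2x\approx x^2y^2$-type relations inside blocks that $\mathbb A^1$ violates). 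Finally $\mathbb M$ satisfies $xtx\approx xtx^2\approx x^2tx$ but not $xyxz\approx xyxzx$, so it differs from $\mathbb J_1,\mathbb J_2,\overline{\mathbb J_1},\overline{\mathbb J_2}$ of \cite{Gusev-Li-Zhang-25}, and from Zhang's variety of \cite{Zhang-13}. Each of these non-containments is a finite check against explicit identity bases.

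The main obstacle I expect is the pairwise-independence step and the ``$\overline{\mathbb A^1}\not\subseteq\mathbb W$'' bookkeeping in part (1): one must be careful that omitting, say, $\mathbb E^1\{\sigma_2\}$ from $\mathbb W$ really does force the hypotheses of Lemma~\ref{L: not E}, which means separately ruling out $\overline{\mathbb A^1}\subseteq\mathbb W$. Since $\overline{\mathbb A^1}$ has non-commuting idempotents with a different ``shape'' than those coming from $\mathbb A^1_0$, this should follow from the fact that $\mathbb M$ satisfies an identity that $\overline{\mathbb A^1}$ refutes; pinning down that identity (likely one of the $\sigma_n$-type or block-balanced identities, or a consequence of $x^2y^2\approx(xy)^2$ failing to hold in $\overline{\mathbb A^1}$ — note $\overline{A^1}$ has a non-idempotent-closed idempotent set in the wrong direction) is the fiddly part. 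Everything else is assembling already-proven facts.
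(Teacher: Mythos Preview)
Your overall strategy matches the paper's: NFB from Corollary~\ref{C: EE}, then a case split on which of $\mathbb A_0^1$, $\mathbb E^1\{\sigma_2\}$, $\overline{\mathbb E^1\{\sigma_2\}}$ a proper subvariety omits, invoking Lemma~\ref{L: not E} (or its dual) in the latter two cases and Theorem~\ref{T: hfbEE} in the first. Two points deserve comment.

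First, your ``pairwise independence'' paragraph is unnecessary and slightly confused. That every proper subvariety of $\mathbb M$ omits at least one of the three joinands is immediate from $\mathbb M$ being their join; no independence verification is needed, and the paper does none.

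Second, in the $\mathbb A_0^1$-omitted case your ``one checks this meet is contained in $\mathbb E^1\vee\overline{\mathbb E^1}$'' is exactly where the paper inserts its only real calculation, and you should not leave it as a hand-wave. The paper first observes that $\mathbb M$ itself satisfies $xs^2z^2x \approx xs^2xz^2x$: the part $\mathbb E^1\{\sigma_2\}\vee\overline{\mathbb E^1\{\sigma_2\}}\subseteq \mathbb E^1\vee\overline{\mathbb E^1}$ satisfies it by Example~\ref{E: EE}(i), and $\mathbb A_0^1$ satisfies the stronger $xszx\approx xsxzx$ (Edmunds~\cite{Edmunds-77}; see also \cite[Proposition~4.2]{Sapir-15}). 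Once that is in hand, any $\mathbb W\subseteq\mathbb M$ satisfying $x^2y^2\approx(xy)^2$ lands inside $\mathbb E^1\vee\overline{\mathbb E^1}$ directly by the equational description in Example~\ref{E: EE}(i), and Theorem~\ref{T: hfbEE} finishes. Your route through $\mathbb M\wedge(\mathbb Q^1\vee\mathbb B^1)$ can be made to work but is a detour.

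For the $\overline{\mathbb A^1}\not\subseteq\mathbb M$ check needed before applying Lemma~\ref{L: not E}, the paper is as terse as you are; it simply asserts that $\mathbb M$ contains neither $\mathbb A^1$ nor $\overline{\mathbb A^1}$ (and uses this again for distinctness). For distinctness from the known list, the paper's argument is shorter than yours: $\mathbb A^1,\overline{\mathbb A^1}\not\subseteq\mathbb M$ rules out $\mathbb A^1\vee\overline{\mathbb A^1}$, $\mathbb A^1\vee\mathbb E^1\{\sigma_2\}$ and its dual, and $\mathbb A_0^1\subseteq\mathbb M$ rules out every other limit variety listed in Section~\ref{sec: known}.
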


\begin{proof} The variety $\mathbb A^1_0 \vee {\mathbb E^1}\{\sigma_2\}  \vee \overline{{\mathbb E^1}\{\sigma_2\}}$ is NFB by Corollary~\ref{C: EE}.

Since ${\mathbb E^1}\{\sigma_2\}  \vee \overline{{\mathbb E^1}\{\sigma_2\}}$ is a subvariety of 
\[
\mathbb E^1 \vee \overline{\mathbb E^1} \stackrel{\text{Example~\ref{E: EE}}}{=} \var  \{xtx \approx xtx^2 \approx x^2tx, x^2y^2 \approx (xy)^2,  xs^2z^2x \approx xs^2xz^2x\},
 \]
it satisfies $xs^2z^2x \approx xs^2xz^2x$. 
Since $\mathbb A_0^1$ satisfies the identity $xszx \approx xsxzx$ (see~\cite[Proposition~3.2]{Edmunds-77} or \cite[Proposition~4.2]{Sapir-15}), the variety  $\mathbb A^1_0 \vee {\mathbb E^1}\{\sigma_2\}  \vee \overline{{\mathbb E^1}\{\sigma_2\}}$ satisfies $\{xtx \approx xtx^2 \approx x^2tx, xs^2z^2x \approx xs^2xz^2x\}$.

Let $\mathbb V$ be a proper subvariety of $\mathbb A^1_0 \vee {\mathbb E^1}\{\sigma_2\}  \vee \overline{{\mathbb E^1}\{\sigma_2\}}$.
If $\mathbb V$ does not contain either ${\mathbb E^1}\{\sigma_2\}$ or $\overline{{\mathbb E^1}\{\sigma_2\}}$, then $\mathbb V$ is FB by Lemma~\ref{L: not E}.
Hence we may assume that $\mathbb V$  does not contain $\mathbb A_0^1$. 
Then $\mathbb V$ satisfies $x^2y^2 \approx (xy)^2$ by Fact~\ref{F: noA01}.
We see that $\mathbb V$ is a subvariety of ${\mathbb E^1}  \vee \overline{{\mathbb E^1}}$.
Now Theorem~\ref{T: hfbEE} applies, yielding that the variety $\mathbb V$ is HFB.

Therefore, $\mathbb A^1_0 \vee {\mathbb E^1}\{\sigma_2\}  \vee \overline{{\mathbb E^1}\{\sigma_2\}}$ is a limit variety. 
It is different from $\mathbb A^1\vee\overline{\mathbb A^1}$, $\mathbb A^1 \vee {\mathbb E^1}\{\sigma_2\}$ and $\overline{\mathbb A^1} \vee \overline{{\mathbb E^1}\{\sigma_2\}}$ because it contains neither $\mathbb A^1$ nor $\overline{\mathbb A^1}$. 
No other limit variety mentioned Section~\ref{sec: known} contains $\mathbb A_0^1$ as a subvariety.
\end{proof}


\section{New Sorting Lemma}
\label{sec: new sort}

The following lemma is a combination of Sorting Lemma~2 in \cite{Gusev-Sapir-22} and Theorem~5.11 in~\cite{Gusev-Li-Zhang-25}.

\begin{sortlemma}\label{SL1}
Let $\mathbb V$ be a variety of aperiodic monoids.
Then either $\mathbb V$ is HFB or one of the following holds:
\begin{itemize}
\item[\textup{(i)}] $\mathbb V$ contains  one of the 11 varieties: $\mathbb A^1\vee\overline{\mathbb A^1}$, $\mathbb J$, $\overline{\mathbb J}$, $\mathbb J_1$, $\overline{\mathbb J_1}$,  $\mathbb J_2$, $\overline{\mathbb J_2}$, $\mathbb K$, $\overline{\mathbb K}$, $\mathbb M(\{xzxyty\})$, $\mathbb M(\{xyzxty,xzytxy\})$;
\item[\textup{(ii)}] $\mathbb V$ satisfies either  $\{xtx \approx xtx^2, xy^2tx \approx (xy)^2tx\}$  or dually,  $\{xtx \approx x^2tx, xty^2x  \approx xt(yx)^2\}$.\qed{\sloppy

}
\end{itemize}
\end{sortlemma}

Since articles \cite{Gusev-Sapir-22} and~\cite{Gusev-Li-Zhang-25} were published, we found three more limit varieties and are ready to prove the following.

\begin{sortlemma}
\label{SL2}
Let $\mathbb V$ be a variety of aperiodic monoids.
Then either $\mathbb V$ is HFB or one of the following holds:
\begin{itemize}
\item[\textup{(i)}]$\mathbb V$ contains  one of the 14 varieties: $\mathbb A^1\vee\overline{\mathbb A^1}$, $\mathbb J$, $\overline{\mathbb J}$, $\mathbb J_1$, $\overline{\mathbb J_1}$,  $\mathbb J_2$, $\overline{\mathbb J_2}$, $\mathbb K$, $\overline{\mathbb K}$, $\mathbb M(\{xzxyty\})$, $\mathbb M(\{xyzxty,xzytxy\})$, $\mathbb A^1 \vee {\mathbb E^1}\{\sigma_2\}$,  $\overline{\mathbb A^1} \vee \overline{{\mathbb E^1}\{\sigma_2\}}$, $\mathbb A^1_0 \vee {\mathbb E^1}\{\sigma_2\}  \vee \overline{{\mathbb E^1}\{\sigma_2\}}$;
\item[\textup{(ii)}] $\mathbb V$ satisfies $\{xtx \approx xtx^2 \approx x^2tx, (xy)^2 \approx x^2y^2\}$  and contains either $\mathbb M(x) \vee \mathbb L_3^1$ or $\mathbb M(x) \vee \mathbb R_3^1$.
\end{itemize}
\end{sortlemma}

\begin{proof}
Suppose that $\mathbb V$ is not HFB and does not contain any of the fourteen varieties. 
Then in view of the Sorting Lemma~\ref{SL1}, we may assume without any loss that $\mathbb V$ satisfies  $\{xtx \approx xtx^2, xy^2tx \approx (xy)^2tx\}$.
According to Theorem~4.3 in~\cite{Sapir-21}, $\mathbb V$ does not contain $\mathbb A^1$.
Then $\mathbb V$ contains $\overline{\mathbb E^1\{\sigma_2\}}$ by the dual to Lemma~\ref{L: not E}. 
Since $\mathbb V$ does not contain $\overline{\mathbb A^1}\vee\overline{\mathbb E^1\{\sigma_2\}}$, the variety $\mathbb V$ does not contain $\overline{\mathbb A^1}$.
Now  Lemma~\ref{L: not E} applies, yielding that $\mathbb V$ contains $\mathbb E^1\{\sigma_2\}$.
Hence $\mathbb E^1\{\sigma_2\} \vee \overline{\mathbb E^1\{\sigma_2\}}$ is a subvariety of $\mathbb V$.
Since $\mathbb V$ does not contain $\mathbb A^1_0 \vee {\mathbb E^1}\{\sigma_2\}  \vee \overline{{\mathbb E^1}\{\sigma_2\}}$, the variety $\mathbb V$ does not contain $\mathbb A_0^1$. 
Therefore, $\mathbb V$ satisfies $x^2y^2 \approx (xy)^2$ by Fact~\ref{F: noA01}.

Overall,  $\mathbb V$ satisfies  $\{xtx \approx xtx^2 \approx x^2tx, x^2y^2 \approx (xy)^2\}$ and contains  $\mathbb E^1\{\sigma_2\} \vee \overline{\mathbb E^1\{\sigma_2\}}$. 
If $\mathbb V$ contains neither $\mathbb L_3^1$ nor $\mathbb R_3^1$, then $\mathbb V$ is a subvariety of  $\mathbb E^1 \vee \overline{\mathbb E^1}$ by Corollary~\ref{C: EEk}. 
Since  $\mathbb E^1 \vee \overline{\mathbb E^1}$ is HFB by Theorem~\ref{T: hfbEE}, the variety $\mathbb V$ must contain either  $\mathbb L_3^1$ or $\mathbb R_3^1$.
Since  $\mathbb E^1\{\sigma_2\}$ contains $\mathbb M(x)$,  the variety $\mathbb V$ contains  either $\mathbb M(x) \vee \mathbb L_3^1$ or $\mathbb M(x) \vee \mathbb R_3^1$.\footnote{Using Fact~\ref{F: E2}(ii) and its dual, one can verify that $\mathbb M(x) \vee \mathbb R_3^1 = \mathbb E^1\{\sigma_2\} \vee \overline{\mathbb E^1\{\sigma_2\}} \vee \mathbb R_3^1$. }
\end{proof}

Sorting Lemma~\ref{SL2} leaves only two options for the variety  $\mathbb M(x) \vee \mathbb R_3^1$: to be either limit or HFB.
We are going to show that $\mathbb M(x) \vee \mathbb R_3^1$ is finitely based by
\[ 
\Delta = \{xtx \approx xtx^2 \approx x^2tx, x^2y^2 \approx (xy)^2, xyt x s y \approx xyx t x s y\}.
\]
The identity  $xyt x s y \approx xyx t x s y$ is contained in an infinite set of identities $\Psi$ which consists of all identities of the form $x {\bf Y}  {\bf B} \approx  x {\bf Y} x {\bf B}$ with $\con(x{\bf Y}) \subseteq \con({\bf B})$. 

\begin{lemma} 
\label{L: delta to sigma}  
The identities in $\Delta$ imply $\Psi$.
\end{lemma}

\begin{proof} 
Decompose 
\[
\Psi = \Psi_0 \cup \Psi_1 \cup \dots \cup \Psi_k\cup \cdots,
\]
where $\Psi_k$ is the set of all identities $\mathbf u=x {\bf Y} {\bf B} \approx  x {\bf Y} x {\bf B}=\mathbf v$ of $\Psi$ in which $\mathbf Y$ is of length $k$.
Verify by induction on $k$ that $\Delta$ implies $\Psi_k$.
If $k=0$, then the required claim is evident. 
Assume that for some $k \ge 0$ every identity in $\Psi_k$ follows from $\Delta$.
Now take an arbitrary identity $\mathbf u=x {\bf Y} {\bf B} \approx  x {\bf Y} x {\bf B}=\mathbf v\in\Psi_{k+1}$.
Then ${\bf Y} = y_1 y_2 \cdots  y_{k+1}$ (it is possible that for some $1 \le i < j \le k+1$ we have $y_i =y_j \in \con({\bf Y})$). 
Then, taking into account Observation~\ref{O: same}(iii), we have:
\begin{align*}
{\bf u} = x {\bf Y} {\bf B} =  x \cdot y_1 y_2 \cdots y_k \cdot y_{k+1} \cdot {\bf B} \stackrel{\Psi_k}{\approx}  x \cdot y_1 y_2 \cdots y_k \cdot x \cdot  y_{k+1} \cdot {\bf B}\stackrel{\Delta}\approx\\       
\stackrel{\Delta}{\approx}    x \cdot y_1 y_2 \cdots y_k \cdot x \cdot  y_{k+1} \cdot x \cdot {\bf B} \stackrel {\Psi_k}{\approx}   x \cdot y_1 y_2 \cdots  y_{k+1} \cdot x \cdot {\bf B} =  x {\bf Y}x {\bf B} = {\bf v}.
\end{align*}
Since the identity $\mathbf u\approx \mathbf v$ is arbitrary and $\Delta$ implies $\Psi_{k}$, we have $\Delta$ implies $\Psi_{k+1}$ as required.
\end{proof}

We say that an $\ell$-\textit{block} of $\bf u$ is a maximal subword of $\bf u$ which contains no last occurrences of letters. 
If  $\fin({\bf u}) = z_1 z_2 \cdots z_n$ for some $n>0$, then $\bf u$ has $n$  $\ell$-blocks ${\mathfrak a}_1, {\mathfrak a}_2, \dots ,  {\mathfrak a}_n$, some of which are empty:  
\[
{\bf u}  = {\mathfrak a}_1\cdot {_{\ell{\bf u}} z_1} \cdot {\mathfrak a}_2  \cdot{_{\ell{\bf u}}z_2} \cdots   {\mathfrak a}_n  \cdot{_{\ell{\bf u}}z_n}. 
\]
For each $1 \le i \le n$ we say that $z_i$ is the \textit{right divider} of ${\mathfrak a}_i$.
Notice that every $\ell$-block ${\mathfrak a}$ of a word $\bf u$ is contained in some block $\bf a$ of $\bf u$. 
Clearly, ${\mathfrak a}$ is the rightmost $\ell$-block of $\bf a$ if and only if the right divider $t$ of ${\mathfrak a}$ is simple in $\bf u$.
In this case, the letter $t$ is also  the \textit{right divider} of $\bf a$, that is, the simple letter adjacent to $\mathbf a$ on the right.
We say that $\bf u$ is $(\mathbb M(x) \vee \mathbb B^1)$-\textit{reduced} if for every  $\ell$-block ${\mathfrak a}$ of $\bf u$, every letter which occurs between ${\mathfrak a}$ and the right divider $t$ of $\bf a$, also occurs in ${\mathfrak a}$.

\begin{obs} 
\label{O: x=xx reduced} 
Every word $\bf u$  is equivalent modulo $\{xtx \approx xtx^2 \approx x^2tx, x^2y^2 \approx (xy)^2\}$ to a $(\mathbb M(x) \vee \mathbb B^1)$-reduced word.
\end{obs}

\begin{proof} 
Let $\bf a$ be a block of $\bf u$ which in turn is partitioned into $\ell$-blocks ${\mathfrak a}_1, {\mathfrak a}_2, \dots , {\mathfrak a}_k$ and their right dividers $t,z_2,\dots,z_k$ as follows: ${\bf a}={\mathfrak a}_k\cdot {_{\ell{\bf u}} z_k} \cdot {\mathfrak a}_{k-1}\cdot{_{\ell{\bf u}}z_{k-1}} \cdots   {\mathfrak a}_1  \cdot t$.
Suppose that for some $1\le m <k$,  for each $1 \le i \le m$ every letter which occurs between ${\mathfrak a}_i$ and the right divider $t$ of $\bf a$, also occurs in ${\mathfrak a}_i$.
Then for some words $\bf p$, $\bf s$ and some  $s \in \simp({\bf u})$, we have ${\bf u} = {\bf p} \cdot s \cdot {\bf a} \cdot t  \cdot {\bf s}$, and $\{xtx \approx xtx^2 \approx x^2tx, x^2y^2 \approx (xy)^2\}$ implies
\[
\begin{aligned}
{\bf u} &{}= {\bf p} s \cdot {\mathfrak a}_k\cdot {_{\ell{\bf u}} z_k} \cdots {\mathfrak a}_{m+1}\cdot{_{\ell{\bf u}}z_{m+1}}\cdot {\mathfrak a}_{m}\cdot{_{\ell{\bf u}}z_{m}}  \cdots{\mathfrak a}_1  \cdot t {\bf s}\\  
&{}\stackrel{\text{Lemma~\ref{L: xyxy}}}{\approx}  {\bf p} s \cdot {\mathfrak a}_k\cdot {_{\ell{\bf u}} z_k} \cdots {\mathfrak a}_{m+1}(z_{m+1}{\mathfrak a}_{m}z_{m}  \cdots{\mathfrak a}_1)\cdot{_{\ell{\bf u}}z_{m+1}}\cdot {\mathfrak a}_{m}\cdot{_{\ell{\bf u}}z_{m}}  \cdots{\mathfrak a}_1  \cdot t {\bf s} ={\bf v}.
\end{aligned}
\] 
In other words, using identities $\{xtx \approx xtx^2 \approx x^2tx, x^2y^2 \approx (xy)^2\}$ we can convert the $\ell$-block ${\mathfrak a}_{m+1}$ of ${\bf u}$ into ${\mathfrak b} = {\mathfrak a}_{m+1}(z_{m+1}{\mathfrak a}_{m}z_{m}  \cdots{\mathfrak a}_1)$ and result in $\mathbf v$. 
Notice that every letter which occurs between ${\mathfrak b}$ and $t$ in $\bf v$ also occurs in ${\mathfrak b}$.
Since we derived the identity ${\bf u} \approx {\bf v}$ from $\{xtx \approx xtx^2 \approx x^2tx, x^2y^2 \approx (xy)^2\}$, by induction, we can derive the identity  ${\bf u} \approx {\bf w}$ from $\{xtx \approx xtx^2 \approx x^2tx, x^2y^2 \approx (xy)^2\}$, where every $\ell$-block inside block $\bf b$ corresponding to $\bf a$ has the required property, while other blocks of $\bf w$ are the same as $\bf u$. 
We repeat this procedure to every block and eventually obtain a $(\mathbb M(x) \vee \mathbb B^1)$-reduced word.
\end{proof}

We say that $\bf u$ is $\Psi$-\textit{reduced} if $\bf u$  is $(\mathbb M(x) \vee \mathbb B^1)$-reduced and every letter appears at most once in every $\ell$-blocks of ${\bf u}$. 
We use $r_z({\bf u})$ to denote the maximal suffix of $\bf u$ which contains no occurrences of $z$.  
In view of Fact~\ref{F: R3}, an identity ${\bf u} \approx {\bf v}$ holds in $\mathbb R_3^1$ if and only if ${\bf u} \approx {\bf v}$ holds in $\mathbb L_2^1 \vee \mathbb R_2^1$ and for each  $z \in \con({\bf u})$ we have $\ini(r_z({\bf u}))= \ini (r_z({\bf v}))$.

\begin{lemma} 
\label{L: Psi reduced} 
\quad
\begin{itemize}
\item[\textup{(i)}] Every word $\bf w$  is equivalent modulo \[
\Psi \cup \{xtx \approx xtx^2 \approx x^2tx, x^2y^2 \approx (xy)^2\} 
\]
to a $\Psi$-reduced word.
\item[\textup{(ii)}] If ${\bf u}$ and ${\bf v}$ are $\Psi$-reduced words and $\mathbb M(x) \vee \mathbb R_3^1$ satisfies ${\bf u} \approx  {\bf v}$, then ${\bf u} =  {\bf v}$.
\end{itemize}
\end{lemma}

\begin{proof} 
(i) By Observation~\ref{O: x=xx reduced}, we can use $\{xtx \approx xtx^2 \approx x^2tx, x^2y^2 \approx (xy)^2\}$ to transform $\bf w$ into a $(\mathbb M(x) \vee \mathbb B^1)$-reduced word $\bf v$. 
Now we use identities in $\Psi$ to erase all non-first occurrences of letters in each $\ell$-block of $\bf v$.

\smallskip

(ii) Since ${\bf u} \approx {\bf v}$ holds in $\mathbb R_2^1$, Fact~\ref{F: L2}(ii) implies that  $\fin({\bf u}) = \fin({\bf v})$.   Let ${\mathfrak a}$  and ${\mathfrak b}$ be the corresponding $\ell$-blocks of ${\bf u}$ and ${\bf v}$.
Denote by $p$ the letter immediately before ${\mathfrak a}$ and ${\mathfrak b}$ on the left in $\mathbf u$ and $\mathbf v$ (we may assume that such a letter exists). 
Let $t \in \simp({\bf u})=\simp({\bf v})$ be the right divider of the corresponding blocks $\bf a$ of $\bf u$ and $\bf b$ of $\bf v$ which contain ${\mathfrak a}$  and ${\mathfrak b}$ (we may also assume that such a right divider exists).

To obtain a contradiction, assume that ${\mathfrak a}  \ne {\mathfrak b}$. 
Since ${\bf u} \approx {\bf v}$ holds in $\mathbb R_3^1$, we have $\ini (r_p({\bf u})) = \ini (r_p({\bf v}))$ by Fact~\ref{F: R3}.
Since each letter appears in ${\mathfrak a}$ only once, the word ${\mathfrak a}$ is a prefix of $\ini (r_p({\bf u}))$.
Similarly, the word ${\mathfrak b}$ is a prefix of $\ini (r_p({\bf v}))$.
Since ${\mathfrak a}  \ne {\mathfrak b}$, modulo duality, ${\mathfrak a}$ is a proper prefix of ${\mathfrak b}$. 
Then ${\mathfrak b} ={\mathfrak a} \cdot y_1y_2 \cdots y_m$ for some $m>0$.
Since each letter appears in ${\mathfrak b}$ only once, we have $\{y_1,y_2, \dots, y_m\} \cap \con({\mathfrak a}) = \emptyset$.
Since   $\ini (r_p({\bf u})) = \ini (r_p({\bf v}))$, the letters $y_1,y_2, \dots, y_m$ must be in $\bf u$ between ${\mathfrak a}$ and ${_{\bf u}t}$.
This contradicts the fact that $\mathbf u$ is $(\mathbb M(x) \vee \mathbb B^1)$-reduced.

Since $\fin({\bf u}) = \fin({\bf v})$ and all the corresponding $\ell$-blocks are equal to each other, we have ${\bf u} =  {\bf v}$.
\end{proof}

Recall that an identity ${\bf u} \approx {\bf v}$ holds in the 3-element cyclic monoid $M(x)$ if and only if $\simp({\bf u}) = \simp({\bf v})$ and $\mul({\bf u}) = \mul({\bf v})$.

\begin{theorem} 
\label{T: QR3} 
The variety 
\[
\begin{aligned}
\mathbb M(x) \vee \mathbb R_3^1 = \var \{xtx \approx xtx^2 \approx x^2tx, x^2y^2 \approx (xy)^2, xyt x s y \approx xyx t x s y\}\\
\stackrel{Observation~\ref{O: same}}{=}\var \{xtx \approx xtx^2 \approx x^2tx, x^2y^2 \approx (xy)^2, xyt y s x \approx xyx t y s x\}
\end{aligned}
\]
is HFB. 
\end{theorem}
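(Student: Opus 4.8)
Throughout put $\Sigma=\{xtx\approx xtx^2\approx x^2tx,\ x^2y^2\approx(xy)^2,\ xytxsy\approx xyxtxsy\}$ and $\mathbb G=\mathbb E^1\vee\overline{\mathbb E^1}\vee\mathbb R_3^1$. The plan is to establish the two displayed bases first and then to read off hereditary finite basedness from the subvariety lattice of $\mathbb Q^1\vee\mathbb R_3^1$. I would obtain the bases through the cycle of inclusions $\mathbb Q^1\vee\mathbb R_3^1\subseteq\mathbb E^1\vee\mathbb R_3^1\subseteq\var\Sigma\subseteq\mathbb Q^1\vee\mathbb R_3^1$. The first inclusion is immediate from $\mathbb Q^1\subseteq\mathbb E^1$ (used already in the proof of Fact~\ref{F: QRE}); the second is a direct check that each member of $\Sigma$ holds in $\mathbb E^1$ (Facts~\ref{F: Q},~\ref{F: E}) and in $\mathbb R_3^1$ (Facts~\ref{F: R31},~\ref{F: R3},~\ref{F: L2}). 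Everything rests on the third inclusion, namely that every identity of $\mathbb Q^1\vee\mathbb R_3^1$ is derivable from $\Sigma$.

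For that inclusion I would run Lemma~\ref{L: fb} with $\mathbb P=\mathbb Q^1$, $\mathbb S=\overline{\mathbb E^1}$ and the function $\Dist(\mathbb Q^1\rightarrow\overline{\mathbb E^1})$ (note $\mathbb Q^1\subseteq\overline{\mathbb E^1}$, since Fact~\ref{F: Q} is mirror-symmetric and hence $\mathbb Q^1$ is self-dual). Given an identity ${\bf u}\approx{\bf v}$ of $\mathbb Q^1\vee\mathbb R_3^1$ of nonempty distance, Lemma~\ref{L: adjacent_R} supplies a word $\bf w$, reachable from $\bf u$ by consequences of $\Sigma$, in which the offending pair of last-in-block occurrences is adjacent; a suitable instance of $xytxsy\approx xyxtxsy$ then reverses that pair, resolving the discrepancy without switching any other pair of last-in-block occurrences, so $|\Dist(\mathbb Q^1\rightarrow\overline{\mathbb E^1})|$ strictly drops. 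All these rewritings stay inside the consequences of $\Sigma$ because the $t^2$-identity of Example~\ref{E: EER3} is the image of the flip under $t\mapsto t^2$. Lemma~\ref{L: fb} thereby reduces every identity of $\mathbb Q^1\vee\mathbb R_3^1$, modulo $\Sigma$, to an identity of $(\mathbb Q^1\vee\mathbb R_3^1)\vee\overline{\mathbb E^1}=\mathbb R_3^1\vee\overline{\mathbb E^1}$.

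It remains to derive those residual identities from $\Sigma$, for which I would prove $\mathbb R_3^1\vee\overline{\mathbb E^1}=\mathbb G$. The nontrivial inclusion amounts to $\mathbb E^1\subseteq\mathbb R_3^1\vee\overline{\mathbb E^1}$, which I would deduce from the sharper containment $\Id(\mathbb Q^1)\cap\Id(\mathbb R_3^1)\subseteq\Id(\mathbb E^1)$. To prove the latter, take ${\bf u}\approx{\bf v}$ holding in $\mathbb Q^1$ and $\mathbb R_3^1$; by Fact~\ref{F: E} it suffices to match first-occurrence orders inside corresponding blocks. For the initial block this is $\ini({\bf u})=\ini({\bf v})$ (Fact~\ref{F: L2}(i), valid as $\mathbb L_2^1\subseteq\mathbb R_3^1$); for a block $\mathbf a_i$ preceded by a simple letter $t_i$, the $\mathbb R_3^1$-stability of each triple $\{x,y,t_i\}$ (Fact~\ref{F: R3}), applied to the first occurrences of $x$ and $y$ after $t_i$, forces those occurrences to appear in $\mathbf a_i$ in the same order as in the corresponding block of $\bf v$. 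Since $\mathbb G=\var\Sigma_{\mathbb G}$ with $\Sigma_{\mathbb G}$ the basis of Example~\ref{E: EER3} and $\Sigma$ proves $\Sigma_{\mathbb G}$, the residual identities (holding in $\mathbb R_3^1\vee\overline{\mathbb E^1}=\mathbb G$) follow from $\Sigma$; this closes the cycle and yields $\mathbb Q^1\vee\mathbb R_3^1=\mathbb E^1\vee\mathbb R_3^1=\var\Sigma$.

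For hereditary finite basedness let $\mathbb V\subseteq\mathbb Q^1\vee\mathbb R_3^1$; it satisfies $xtx\approx xtx^2\approx x^2tx$. If $\mathbb V\not\supseteq\mathbb Q^1$, then exactly as in the proof of Theorem~\ref{T: hfbEE} the variety $\mathbb V$ contains neither $\mathbb E^1\{\sigma_2\}$ nor $\overline{\mathbb A^1}$, so $\mathbb V$ is FB by Lemma~\ref{L: not E}. Otherwise $\mathbb V\supseteq\mathbb Q^1$; since $(\mathbb Q^1\vee\mathbb R_3^1)\wedge\mathbb B^1=\mathbb R_3^1$, the variety $\mathbb L_3^1$ is automatically absent from $\mathbb V$, so if moreover $\mathbb V\not\supseteq\mathbb R_3^1$ then Corollary~\ref{C: EEk} places $\mathbb V$ inside $\mathbb E^1\vee\overline{\mathbb E^1}$, which is HFB by Theorem~\ref{T: hfbEE}. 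The sole remaining possibility is $\mathbb V\supseteq\mathbb Q^1\vee\mathbb R_3^1$, forcing $\mathbb V=\mathbb Q^1\vee\mathbb R_3^1$, which is FB by the basis $\Sigma$. Hence every subvariety is FB. I expect the decisive difficulty to be the basis derivation: verifying that the preprocessing of Lemma~\ref{L: adjacent_R} together with the flip $xytxsy\approx xyxtxsy$ strictly lowers $\Dist(\mathbb Q^1\rightarrow\overline{\mathbb E^1})$, and confirming $\mathbb R_3^1\vee\overline{\mathbb E^1}=\mathbb G$ through the stability argument; once these are in place, the lattice step is routine bookkeeping.
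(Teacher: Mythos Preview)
Your proposal is correct and follows essentially the same approach as the paper: the same block first-occurrence argument showing $\Id(\mathbb Q^1)\cap\Id(\mathbb R_3^1)\subseteq\Id(\mathbb E^1)$, the same reduction via Lemma~\ref{L: fb} with $\Dist(\mathbb Q^1\rightarrow\overline{\mathbb E^1})$ and Lemma~\ref{L: adjacent_R}, and the same appeal to Example~\ref{E: EER3} for the residual. The only organizational differences are that the paper runs the cycle as $\mathbb E^1\vee\mathbb R_3^1\subseteq\mathbb Q^1\vee\mathbb R_3^1\subseteq\var\Sigma\subseteq\mathbb E^1\vee\mathbb R_3^1$ (proving $\mathbb E^1\subseteq\mathbb Q^1\vee\mathbb R_3^1$ up front so that the residual $\mathbb V\vee\overline{\mathbb E^1}$ is immediately $\mathbb G$), and that the paper obtains HFB in one line from Sorting Lemma~\ref{SL2}, whereas you unpack the relevant case split directly via Lemma~\ref{L: not E}, Corollary~\ref{C: EEk}, and Theorem~\ref{T: hfbEE}; both routes are valid and amount to the same content.
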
 
 
\begin{proof} 
First, notice that $\Delta$ holds in $\mathbb M(x)\vee\mathbb R_3^1$ by Fact~\ref{F: R3} and the description of identities of $\mathbb M(x)$.
Now let us verify that every identity ${\bf u} \approx {\bf v}$ of $\mathbb M(x) \vee \mathbb R_3^1$ follows from $\Delta$.
In view of Lemma~\ref{L: delta to sigma}, it is enough to derive  ${\bf u} \approx {\bf v}$ from $\Psi^\prime = \{xtx \approx xtx^2 \approx x^2tx, x^2y^2 \approx (xy)^2\} \cup \Psi$. 
By Lemma~\ref{L: Psi reduced}(i), $\Psi^\prime$ implies $\{ {\bf u}\approx {\bf u}_1, {\bf v}\approx {\bf v}_1\}$ for some $\Psi$-reduced words ${\bf u}_1$ and ${\bf v}_1$. 
Since $\mathbb M(x) \vee \mathbb R_3^1$ satisfies $\Psi^\prime$, and  $\mathbb M(x) \vee \mathbb R_3^1$ satisfies ${\bf u} \approx  {\bf v}$, we have  $\mathbb M(x) \vee \mathbb R_3^1$ satisfies ${\bf u}_1 \approx  {\bf v}_1$. 
Hence ${\bf u}_1 =  {\bf v}_1$ by Lemma~\ref{L: Psi reduced}(ii). 
Consequently, ${\bf u} \stackrel{\Psi^\prime}{\approx} {\bf u}_1 = {\bf v}_1 \stackrel{\Psi^\prime}{\approx} {\bf v}$.
Thus, we have proved that $\mathbb M(x) \vee \mathbb R_3^1=\var\,\Delta$.
Since $\mathbb M(x) \vee \mathbb R_3^1$ is FB, Sorting Lemma~\ref{SL2} implies that it is HFB.
\end{proof}

\begin{fact}  
\label{F: QRE1}
$\mathbb M(x) \vee \mathbb R_3^1 = \mathbb E^1 \vee \mathbb R_3^1$.
 \end{fact}

\begin{proof}  
Since by Figure~4 in \cite{Jackson-Lee-18},  the variety $\mathbb E^1$ contains $\mathbb M(x)$, we have 
$\mathbb M(x) \vee \mathbb R_3^1 \subseteq \mathbb E^1 \vee \mathbb R_3^1$.

Conversely, let  ${\bf u} \approx {\bf v}$ be an identity of $\mathbb M(x) \vee \mathbb R_3^1$.
Since ${\bf u} \approx {\bf v}$ holds in $\mathbb M(x)$, it is easy to see that $\simp({\bf u}) = \simp({\bf v})$. 
Since ${\bf u} \approx {\bf v}$ holds in $\mathbb R_2^1$, Fact~\ref{F: L2}(ii) implies that the sequence of simple letters is the same in $\bf u$ and $\bf v$.  
Let ${\mathbf a}$  and ${\mathbf b}$ be the corresponding blocks of ${\bf u}$ and ${\bf v}$.
Let $s, t \in \simp({\bf u})=  \simp({\bf v})$ be the left and right dividers of $\bf a$ in $\bf u$ and $\bf b$ in $\bf v$.
In view of Fact~\ref{F: R3}, we have $\ini (r_s({\bf u})) = \ini (r_s({\bf v}))$.
Since $\ini({\bf a}) \cdot t$ is a prefix of $\ini (r_s({\bf u}))$ and $\ini({\bf b}) \cdot t$ is a prefix of $\ini (r_s({\bf v})) = \ini (r_s({\bf u}))$, we have $\ini({\bf a}) = \ini({\bf b})$. 
Therefore, ${\bf u} \approx {\bf v}$ holds in  $\mathbb E^1$ by Facts~\ref{F: L2}(i) and~\ref{F: E}. 
Thus, $\mathbb M(x) \vee \mathbb R_3^1 = \mathbb E^1 \vee \mathbb R_3^1$.
\end{proof}

In view of Sorting Lemma~\ref{SL2} and Theorem~\ref{T: QR3}, if there exists any other limit variety of aperiodic monoids, then it is contained in
\[
\mathbb M(x) \vee \mathbb B^1 \stackrel{\text{Fact}~\ref{F: QRE1}}{=}  \mathbb Q^1 \vee \mathbb B^1 \stackrel{\text{Lemma}~\ref{L: xyxy}}{=} \var\{xtx \approx xtx^2 \approx x^2tx,\, x^2y^2 \approx (xy)^2\}
\]
and properly contains either $\mathbb M(x) \vee \mathbb L_3^1$ or $\mathbb M(x)  \vee \mathbb R_3^1$.

\begin{question} 
\label{q}
Is every monoid satisfying the identities $xtx \approx xtx^2 \approx x^2tx$ and $x^2y^2 \approx (xy)^2$ finitely based?
\end{question}


Institute of Natural Sciences and Mathematics, Ural Federal University, Lenina 51, 620000 Ekaterinburg, Russia

\textit{E-mail address}: \texttt{sergey.gusb@gmail.com}

\medskip

Department of Mathematics, Ben-Gurion University of the Negev, P.O.B. 653, 8410501 Beersheba, Israel

\textit{E-mail address}: \texttt{olga.sapir@gmail.com}

\end{document}